\documentclass[12pt]{article}

\usepackage[utf8]{inputenc}
\usepackage[T2A]{fontenc}
\usepackage[english,greek,russian]{babel}
\usepackage[colorlinks=true,linkcolor=black,pagecolor=black,citecolor=black]{hyperref}
\usepackage{amsmath}
\usepackage{amssymb}
\usepackage{amsthm}
\usepackage{array}
\usepackage{bm}
\usepackage{cite}
\usepackage{color}
\usepackage{dsfont}
\usepackage{enumerate}
\usepackage{epsf}
\usepackage{euscript}
\usepackage{graphicx} 
\usepackage{indentfirst}
\usepackage{latexsym}
\usepackage{mathrsfs}
\usepackage{mathtools}
\usepackage{ulem}  
\usepackage{wasysym}
\usepackage{wrapfig}
\usepackage{yfonts} 
\usepackage{tikz}
\usepackage{pgf}

\newtheorem{theorem}{\indent\bf Theorem}[section]

\definecolor{mathcolor}{RGB}{0,75,105}
\definecolor{argcolor} {RGB}{0,105,50}
\definecolor{speccolor}{RGB}{0,40,125}
\newcommand{\dismath} [1] {{\color{mathcolor}#1}}
\newcommand{\distext} [1] {{\color{speccolor}#1}}
\newcommand{\argument}[1] {{\color{argcolor!50!black}{#1}}}

\newcommand{\defnotion}       [1]       {\distext{\textit{#1\/}}}

\newcommand{\remember}        [1]       {}

\newcommand{\bwedge}   {\boldsymbol{\wedge}}

\newcommand{\set}             [1]       {\dismath{\left\{ \argument{#1} \right\}}}

\newcommand{\otuple}          [1]       {\dismath{\langle\argument{#1}\rangle}}

\newcommand{\logic}           [1]       {\dismath{\mathbf{#1}}}         
\newcommand{\lang}            [1]       {\dismath{\mathcal{#1}}}        
\newcommand{\cclass}          [1]       {\dismath{\mathrm{#1}}}         
  \newcommand{\ccls}[1]{\cclass{#1}}

\newcommand{\numbers}         [1]       {\dismath{\mathds{#1}}}
\newcommand{\numN}                      {\numbers{N}}

\newcommand{\equivalence}               {\leftrightarrow}
  \newcommand{\lra} {\equivalence}

\renewcommand{\iff}                     {\mathrel{\dismath{\Longleftrightarrow}}}      
\newcommand{\bydef}                     {\mathrel{\dismath{\leftrightharpoons}}}

\makeatletter
\newcommand{\ExtDiamond}{\@ifnextchar({\ExtDiamond@i}{\ExtDiamond@i({p})}}
\def\ExtDiamond@i(#1){{\Diamond\!\!\!\!\Diamond}_{#1}}
\makeatother

\usepackage[auth-sc]{authblk} 
\usepackage{enumitem}

\usepackage{amsmath}
\usepackage{algorithm}     
\usepackage[noend]{algpseudocode} 
\makeatletter
\renewcommand{\fnum@algorithm}{Algorithm \thealgorithm:} 
\makeatother
\makeatletter
\@addtoreset{algorithm}{section}
\makeatother
\renewcommand{\thealgorithm}{\thesection.\arabic{algorithm}}

\usepackage[a4paper, 
            textwidth=6.5in, 
            left=1in, 
            right=1in, 
            top=1.0in, 
            bottom=1.0in]{geometry} 

\SetEnumitemKey{midsep}{itemsep=0pt}
\setlist{midsep}

\newcounter{\theequation}[section]
\renewcommand{\theequation}{\thesection.\arabic{equation}}

\renewcommand{\dismath} [1] {#1}
\renewcommand{\distext} [1] {#1}
\renewcommand{\argument}[1] {#1} 

\newtheorem{imtheorem}{\bf Theorem}[section]
\newtheorem{imcorollary}[imtheorem]{\bf Corollary}
\newtheorem{imlemma}[imtheorem]{\bf Lemma}

\newtheorem{improposition}[imtheorem]{\bf Proposition}

\newcounter{\theequation}[section]
\renewcommand{\theequation}{\thesection.\arabic{equation}}

\newcommand{\rhu}{\rightharpoonup}
\newcommand{\brhu}{\boldsymbol{\rhu}}

\newcommand{\MR}[1]{#1} 
\newcommand{\insMR}[1]{\MR{#1}}
\newcommand{\delMR}[1]{} 

\usepackage{tocloft}
\begin{document}
\selectlanguage{english}
\sloppy

\title{Semantics for the minimal well-determined logic}
\author[1]{Igor~Gorbunov}
\author[2]{Mikhail~Rybakov} 
\affil[1]{HSE University and Tver State University}
\affil[2]{Higher School of Modern Mathematics MIPT and HSE University} 
\date{} 

\maketitle

\begin{abstract}
The minimal well-determined logic in the language with conjunction and implication is investigated. A calculus for this logic, in which the modus ponens rule is not postulated, is proposed. The main result consists in constructing a semantics for this logic: it is formed by the class of lower semilattices with a greatest element, where the implication is interpreted using a partial function defined via the partial order of the semilattice. This extension of the notion of interpreting logical connectives in a matrix allows for the correct determination of the truth of formulas in the language with conjunction and implication. Soundness and completeness theorems are proved. The proposed semantics creates an opportunity to investigate questions of finite model property for such systems and can also serve as a basis for studying other properties of both the minimal well-determined logic itself and its extensions. As an application of the obtained results, we prove that the set of tautologies of the minimal well-determined logic is decidable in polynomial time and present a corresponding decision algorithm.\end{abstract} 

\noindent
\textbf{Keywords}:
well-determined logics,
matrix semantics,
completeness,
decidability,
computational complexity.

\tableofcontents

\pagebreak[3]

\section{Introduction}
\setcounter{equation}{0}
\label{sec:1}

The notion of \defnotion{well-determined logics} was introduced by Ryszard W\'{o}jcicki in his work~\cite[Chapter~3]{IG:MR:1}.
Considering classical logic~\cite[1.6.6]{IG:MR:2}, he drew attention to the correspondence between certain tautologies and the relation of logical consequence: if $\logic{Cl}$ is the set of tautologies of classical logic, then, for all formulas $\alpha, \alpha_1,\ldots,\alpha_n$,
$$
\begin{array}{lcl}
\alpha_1\wedge \ldots \wedge\alpha_n\to\alpha\in\logic{Cl}
  & \iff 
  & \alpha_1, \ldots,\alpha_n\vdash\alpha.
\end{array}
$$
Based on formulas of the form $\alpha_1\wedge \ldots \wedge\alpha_n\to\alpha$, he defined a consequence extension operator $\angle K_{\mathit{Cn}}$, setting for any set of formulas $X$ and any formula~$\alpha$
$$
\begin{array}{lcl}
\label{eq:4}
\alpha\in\angle K_{\mathit{Cn}}(X)
  & \iff
  & \parbox[t]{205pt}{there exist $\alpha_1,\ldots,\alpha_n\in X\cup\logic{Cl}$ such that $\alpha_1\wedge \ldots \wedge\alpha_n\to\alpha\in\logic{Cl}$.}
\end{array}
$$

One of the important correspondences between tautologies and the consequence relation is known as the deduction theorem. If $C$ is a consequence operation, then the deduction theorem can be expressed as follows: for any set of formulas $X$ and any formulas $\alpha$ and~$\beta$,
$$
\begin{array}{lcl}
\alpha\in C(X, \beta) 
  & \iff 
  & \beta\to\alpha\in C(X).
\end{array}
$$
However, \cite[1.6.2]{IG:MR:2} notes that to satisfy this correspondence it is sufficient that the \defnotion{weak deduction theorem} (the \defnotion{weak deduction property}) holds: for any finite \insMR{nonempty} set of formulas $\Gamma$ and any formula~$\alpha$,
$$
\begin{array}{lcl}
\alpha\in C(\Gamma)
  & \iff 
  & \mbox{${\bigwedge}\Gamma\to\alpha\in C(\varnothing)$,}
\end{array}
$$
where ${\bigwedge}\Gamma$ is a conjunction of all formulas from $\Gamma$ (one of the many possible).

This fact allows one to generalize the described construction to arbitrary sets of formulas in languages with conjunction and implication, which was done in~\cite[Chapter~3]{IG:MR:1} and~\cite[2.10]{IG:MR:2}. Namely, to each set of formulas $L$ there corresponds an operation $\vec L$ defined on sets of formulas:
\begin{align}
\label{eq:7}
\alpha\in \vec L(X)
  & ~~\iff~~ 
  \parbox[t]{200pt}{there exist $\alpha_1,\ldots,\alpha_n\in X\cup L$ such that $\alpha_1\wedge \ldots \wedge\alpha_n\to\alpha\in L$.}
\end{align}
In the same works, R.\,W\'{o}jcicki proposed a criterion that a set $L$ must satisfy in order for the operation $\vec L$ to be a standard consequence operation satisfying the equality $L=\vec L(\varnothing)$. Logics obtained by such operations possess the weak deduction property and were called \defnotion{well-determined}.

The presence of the weak deduction property leads, in particular, to the fact that to specify a logic one does not require a semantics with strong completeness, since ordinary completeness is sufficient. Indeed, let $\mathbb{S}$ be some characteristic semantics for the set $C(\varnothing)$; let $\mathbb{S}\models\Gamma$ mean that every formula from the set $\Gamma$ is true in~$\mathbb{S}$. Then
$$
\begin{array}{lcl}
\Gamma\vdash_C\alpha
  & \iff 
  & \mathbb{S}\models \{\gamma\to\alpha : \text{$\gamma$ is a conjunction of all formulas from $\Gamma$}\}.
\end{array}
$$

In this work, we continue research in this area. Our investigations are devoted to the least well-determined logic in the language with two binary connectives $\wedge$ and $\to$, which we call \defnotion{conjunction} and \defnotion{implication}. Using the criterion for the deductiveness of a set of formulas (Section~\ref{sec:3}) we present a calculus that defines the least well-determined logic in this language (Section~\ref{sec:4}). Then we consider the matrix semantics of this logic (Sections~\ref{sec:5} and~\ref{sec:6}). Note that, unlike standard matrix semantics, we need an extension of the notion of a valuation, since in the proposed semantics there is no operation corresponding to implication (Section~\ref{sec:6}). Next (Section~\ref{sec:7}), we give a decision algorithm for the set of tautologies of the minimal well-determined logic and estimate the complexity of the decision problem for this set. It was surprising for the authors to find that, despite quite complicated semantics, this problem is decidable in polynomial time: usually ``natural'' logics contain $\logic{Cl}$ (for example, as the image of some embedding) and therefore are $\ccls{coNP}$-hard. Finally (Section~\ref{sec:8}), we discuss the results and related questions.

Where possible, we offer a general exposition of the aspects of the questions under consideration, i.e., we speak about arbitrary logics, not only about the least well-determined logic; this allows us to describe a broader context. Where this is impossible (or, more precisely, where we do not know if it is possible), we describe constructions and present results pertaining directly to the least well-determined logic and not touching upon other logics.

\section{Basic notions and notation}
   \setcounter{equation}{0}
   \label{sec:2}

Let $\Pi=\{p_i:i\in\numN\}$ be a countable set of
\defnotion{propositional variables}, $\Sigma$ a set of finitary \defnotion{function symbols}, which we call \defnotion{logical connectives}, and $\Upsilon=\{(\}\cup\{)\}$ a set of \defnotion{auxiliary symbols}.
The triple $\lang{S}=\langle \Pi, \Sigma, \Upsilon\rangle$ is called a \defnotion{propositional alphabet}.
Any term constructed from the symbols of the alphabet $\lang{S}$ is called an \defnotion{$\lang{S}$\nobreakdash-formula};\footnote{We do not give an explicit definition of a term, as it is standard, and we assume the reader can reconstruct it.} $\lang{S}$\nobreakdash-formulas containing no logical connectives are called \defnotion{atomic}.
Let $\lang{L}_{\lang{S}}$ be the \defnotion{language} consisting of all ${\lang{S}}$-formulas; formulas of the language $\lang{L}_{\lang{S}}$, i.e., $\lang{S}$-formulas, are also called \defnotion{$\lang{L}_{\lang{S}}$\nobreakdash-formulas}.

A \defnotion{substitution} is a homomorphic extension of a mapping $\varepsilon:\Pi\to\lang{L}_{\lang{S}}$ to the set of all $\lang{S}$-formulas. Since such an extension is unique, we denote it also by $\varepsilon$. By $\lang{E}_{\!\lang{S}}$ we denote the set of all such substitutions. For any set $\Gamma$ of $\lang{S}$-formulas, by $\varepsilon\Gamma$ we denote the result of applying the substitution $\varepsilon$ to all formulas from~$\Gamma$.
\insMR{A set of $\lang{S}$-formulas closed under all substitutions is called \defnotion{invariant}.}

Let $X$ be an arbitrary set. We use the following notation:
\begin{itemize}[noitemsep, topsep=5pt, parsep=1pt]
\item $|X|$ is the cardinality of the set $X$;
\item $\mathcal{P}(X) = \set{Y : Y\subseteq X}$;
\item $\mathcal{P}_{\mathit{fin}}(X) = \set{Y\in \mathcal{P}(X) : |Y|\in \numN}$; 
\item $\mathcal{P}_{\mathit{fin}}^+(X) = \mathcal{P}_{\mathit{fin}}^{\phantom{+}}(X)\setminus\set{\varnothing}$,
\end{itemize}
i.e., $\mathcal{P}(X)$, $\mathcal{P}_{\mathit{fin}}^{\phantom{+}}(X)$, and $\mathcal{P}_{\mathit{fin}}^+(X)$ are, respectively, the set of all subsets, the set of all finite subsets, and the set of all nonempty finite subsets of $X$.

A function $C\colon \mathcal{P}(\lang{L}_{\lang{S}})\to\mathcal{P}(\lang{L}_{\lang{S}})$ is called a \defnotion{consequence operation}, or simply a \defnotion{consequence}, if it satisfies the following conditions:
\begin{itemize}[leftmargin=4em, noitemsep, topsep=5pt, parsep=1pt] 
\item[(A1)] $\Gamma\subseteq C(\Gamma)$ \hfill (extensiveness);\mbox{\hspace{2em}}
\item[(A2)] $\Gamma\subseteq\Delta\Rightarrow C(\Gamma)\subseteq C(\Delta)$ \hfill (monotonicity);\mbox{\hspace{2em}}
\item[(A3)] $C(C(\Gamma))=C(\Gamma)$ \hfill (idempotence),\mbox{\hspace{2em}}
\end{itemize}
where $\Gamma$ is any set of $\lang{S}$-formulas.
A consequence $C$ is called \defnotion{standard} if it additionally satisfies two more conditions:
\begin{itemize}[leftmargin=4em, noitemsep, topsep=5pt, parsep=1pt]
\item[(A4)] $\forall\varepsilon\in \lang{E}_{\!\lang{S}}~ \varepsilon C(\Gamma)\subseteq C(\varepsilon\Gamma)$ \hfill (structurality);\mbox{\hspace{2em}}
\item[(A5)] $\varphi\in C(\Gamma)\Rightarrow \exists\Delta\in\mathcal{P}_{\mathit{fin}}(\Gamma)~\varphi\in C(\Delta)$ \hfill (finitariness).\mbox{\hspace{2em}}
\end{itemize}

A \defnotion{logic} is a pair $\bm{C}=\otuple{\lang{S},C}$, where $\lang{S}$ is a propositional alphabet and $C$ is a consequence.
A logic with a standard consequence is called \defnotion{standard}.

Let $\bm{C}=\otuple{\lang{S},C}$ be a logic and $X$ a set of $\lang{S}$-formulas. The set $C(X)=\set{C(\varphi):\varphi\in X}$ is called a \defnotion{theory} of the logic $\bm{C}$ with the set of \defnotion{axioms} $X$.
The set $C(\varnothing)$ is called the \defnotion{set of tautologies} of the logic, and its elements are called \defnotion{tautologies}. A theory $C(X)$ is called \defnotion{consistent} if $C(X)\not=\lang{L}_{\lang{S}}$. A logic $\bm{C}$ is called \defnotion{consistent} if the set $C(\varnothing)$ is consistent.

A standard consequence $C$ allows one to define on the set $\lang{L}_{\lang{S}}$ of all $\lang{S}$\nobreakdash-formulas the \defnotion{logical consequence relation} $\vdash_C~ \subseteq \mathcal{P}_{\mathit{fin}}(\lang{L}_{\lang{S}})\times\lang{L}_{\lang{S}}$ as follows:
$$
\begin{array}{lcl}
\Gamma\vdash_C\varphi
  & \bydef
  & \varphi\in C(\Gamma).
\end{array}  
$$
Conversely, for a logical consequence relation $\vdash$, one can define a consequence $C_\vdash$ by putting
$$
\begin{array}{lcl}
C_\vdash(\Gamma)
  & =
  & \set{\varphi\in \lang{L}_{\lang{S}} : \Gamma\vdash\varphi}.
\end{array}  
$$
Thus, the definitions of a logic as a pair $\langle \lang{S}, C\rangle$ and as a pair $\langle \lang{S}, \vdash\rangle$ are equivalent.

A pair $(\Gamma, \varphi)$, where $\Gamma\cup\{\varphi\}\in\mathcal{P}_{\mathit{fin}}(\lang{L}_\lang{S})$, is called a \defnotion{sequent}. A sequent $(\Gamma, \varphi)$ is called a \defnotion{sequent of the logic} $\langle \lang{S},C \rangle$ if $(\Gamma, \varphi)\in {\vdash_C}$; in this case we write $\Gamma\vdash_C\varphi$.

\defnotion{Inference rules} are understood and denoted in the standard way---as schemata in the metalanguage \cite[7.2, p.~24]{IG:MR:1}. For writing rules we use the language $\lang{L}_{\lang{S}}$, where propositional variables are understood as metavariables for formulas; for this reason we assume that the premises and conclusions of inference rules contain $\lang{L}_{\lang{S}}$\nobreakdash-formulas. Premise-free inference rules are called \defnotion{axiom schemata} \cite[7.6, p.~25]{IG:MR:1}, or simply \defnotion{axioms}\footnote{With this, no confusion with the axioms of the set $C(X)$ arises.}; axioms are identified with the conclusions of the corresponding premise-free rules.

Let $A$ be a set of axioms, $R$ a set of inference rules not containing axioms, and $F$ a set of $\lang{L}_{\lang{S}}$\nobreakdash-formulas. We say that $A$ \defnotion{axiomatizes $F$ over $R$} if $F$ is the smallest set of $\lang{L}_{\lang{S}}$\nobreakdash-formulas containing $A$ and closed under all inference rules from $R$.
We say that $A$ \defnotion{axiomatizes the logic $\langle \lang{S},C \rangle$ over $R$} if for any set $X$ of $\lang{L}_{\lang{S}}$\nobreakdash-formulas the set $C(X)$ is the smallest among the sets containing $X\cup A$ and closed under all inference rules from $R$.

Let $\Gamma$ be a nonempty finite set of $\lang{S}$-formulas and $\alpha$ an $\lang{S}$\nobreakdash-formula. By $\Gamma^\ast_{\hspace{-.16ex}\wedge}$ we denote the set of all possible multiple conjunctions composed of all formulas occurring in $\Gamma$.
Define the following set of implications:
$$
\begin{array}{lcl}
[\Gamma \to \alpha]^\wedge & = & \set{\varphi\to\alpha : \varphi\in \Gamma^\ast_{\hspace{-.16ex}\wedge}}.
\end{array}
$$ 

From now on we consider $\lang{S}$ and $C$ fixed. Thus we fix the language $\lang{L}_\lang{S}$ and also the logic $\bm{C}=\langle\lang{S},C\rangle$. For this reason we simplify the notation and terminology: we write $\lang{L}$ instead of $\lang{L}_\lang{S}$, identify the logic $\bm{C}$ with the consequence $C$, and call $\lang{S}$\nobreakdash-formulas simply formulas. Moreover, we sometimes write a finite set of formulas by listing its elements, omitting the curly braces.

\section{Well-determined logics and the minimal well-deter\-mined logic}
   \setcounter{equation}{0}
   \label{sec:3}

R. W\'{o}jcicki~\cite[Chapter~3]{IG:MR:1} called a logic $C$ \defnotion{well-determined} if it is compatible with the connectives $\wedge$ and $\to$ in the following way:
\begin{itemize}[leftmargin=4em, noitemsep, topsep=5pt, parsep=1pt]
\item[(B1)] $\beta\to\alpha\in C(\varnothing) ~\iff~ \alpha\in C(\beta)$;
\item[(B2)] $C(\alpha\wedge\beta)=C(\alpha,\beta)$.
\end{itemize}
As noted above, in any well-determined logic, to a sequent of the form $(\Gamma, \alpha)$ corresponds the set of formulas $[\Gamma\to\alpha]^\wedge$. This property was taken as the basis for the definition of these logics in~\cite{IG:MR:3}; we use it.
Namely, a \defnotion{well-determined logic} is a logic that satisfies\insMR{, for every non-empty finite $\Gamma$,} the following condition\footnote{The letters WD stand for ``well-determined''.}:
\begin{equation}
\tag{{WD}}
\label{eq:WD}
[\Gamma\to\alpha]^\wedge\subseteq C(\varnothing) ~~\iff~~ \alpha\in C(\Gamma).
\end{equation}

\insMR{
Let us show that \eqref{eq:WD} is equivalent to {\rm{(B1)}} and {\rm{(B2)}}; see also \cite[Theorem~5]{IG:MR:3}. Indeed:
\begin{itemize}
\item
{\rm{(B1)}} is an instance of \eqref{eq:WD} with $\Gamma=\{\beta\}$.
\item
Since $\{\alpha,\beta\}^\ast_{\hspace{-.16ex}\wedge} = \{\alpha\wedge\beta\}^\ast_{\hspace{-.16ex}\wedge}$, we obtain, by \eqref{eq:WD},
$$ 
\begin{array}{lclcl}
\gamma\in C(\alpha,\beta) 
  & \iff 
  & \set{\alpha\wedge\beta\to\gamma}\subseteq C(\varnothing)
  & \iff
  & \gamma\in C(\alpha\wedge\beta),
 \end{array}
$$
and {\rm{(B2)}} follows.
\item
To infer $(\Rightarrow)$ of \eqref{eq:WD} from {\rm{(B1)}} and {\rm{(B2)}}, let $[\Gamma\to\alpha]^\wedge\subseteq C(\varnothing)$. This means that $\beta\to\alpha\in C(\varnothing)$, for every $\beta\in\Gamma^\ast_{\hspace{-.16ex}\wedge}$. By {\rm{(B1)}}, $\alpha\in C(\beta)$. By {\rm{(B2)}}, $C(\Gamma)=C(\beta)$. Thus, $\alpha\in C(\Gamma)$.
\item
To infer $(\Leftarrow)$ of \eqref{eq:WD} from {\rm{(B1)}} and {\rm{(B2)}}, let $\alpha\in C(\Gamma)$. By $(B2)$, $C(\Gamma)=C(\beta)$, for every $\beta\in\Gamma^\ast_{\hspace{-.16ex}\wedge}$, and, by $(B1)$, $\beta\to\alpha\in C(\varnothing)$. Thus, $[\Gamma\to\alpha]^\wedge\subseteq C(\varnothing)$.
\end{itemize}
}

Note that {\rm{(B2)}} implies that
$$
\begin{array}{rcl}
C(\alpha\wedge\alpha) & = & C(\alpha); \\
C(\alpha\wedge\beta)  & = & C(\beta\wedge\alpha); \\
C((\alpha\wedge\beta)\wedge\gamma) & = & C(\alpha\wedge(\beta\wedge\gamma)), \\
\end{array}
$$
i.e., one can say that conjunction possesses the properties inherent to it in classical logic.

With such a definition of a well-determined logic, the operation $\vec L$ defined in~\eqref{eq:7} is naturally defined as follows:
\begin{equation}
\label{eq:8}
\alpha\in\vec L(X)
  ~~\iff~~
  \mbox{there is $\Gamma \in \mathcal{P}_{\mathit{fin}}(X\cup L)$ such that $[\Gamma\to\alpha]^\wedge\subseteq L$.}
\end{equation}

R. W\'{o}jcicki proposed to call a set of formulas $L$ \defnotion{deductive} if there exists a well-determined logic $C$ such that $C(\varnothing)=L$.
Note that for any deductive set $L$, such a logic is uniquely determined and coincides with the logic $\vec L$~\cite[Theorem~9]{IG:MR:3}.

\insMR{The criterion for the deductiveness of a set of formulas presented in \cite[Theorem~11.5]{IG:MR:1} contains an infinite set of inference rules, which hinders its effective application. Therefore, instead of this criterion, we will use an effective criterion formulated in \cite{IG:MR:4}, slightly strengthening it.}
For a set of formulas $L$ we define the following conditions:
\begin{itemize}[leftmargin=4em, noitemsep, topsep=5pt, parsep=1pt]
\item[(C1)] $L$ is \insMR{invariant}; 
\item[(C2)] $\{p\to p\wedge p, p\wedge q\to q\}\subseteq L$;
\item[(C3)] $L$ is closed under the following inference rules:\footnote{We give a comment to the names used for them: 
$(\mathit{TR})$ is \defnotion{transitivity} of implication;
$(\mathit{AD})$ is \defnotion{adjunction};
$(\mathit{MP})$ is \defnotion{modus ponens};
$(\mathit{CM})$ is \defnotion{composition}, aka \defnotion{conjunction composition}, \defnotion{monotonicity of conjunction} or \defnotion{conjunction weakening};
$(\mathit{CV})$ is \defnotion{conversion}, aka \defnotion{conditional conversion} or \defnotion{simplification of antecedent}; 
$(\mathit{EA})$ is \defnotion{extension of antecedent}, aka \defnotion{weakening the antecedent} or \defnotion{left weakening}.}
    $$
    \begin{array}{llcll}
    (\mathit{TR}) & \displaystyle \frac{p\to q, q\to r}{p\to r}; &\mbox{{}$\quad${}}&
    (\mathit{CM}) & \displaystyle \frac{p_1\to q_1, p_2\to q_2}{p_1\wedge p_2\to q_1\wedge q_2};
    \bigskip\\
    (\mathit{AD}) & \displaystyle \frac{p, q}{p\wedge q}; &\mbox{{}$\quad${}}&
    (\mathit{CV}) & \displaystyle \frac{p, p\wedge q\to r}{q\to r};
    \bigskip\\
    (\mathit{MP}) & \displaystyle \frac{p, p\to q}{q}; &\mbox{{}$\quad${}}&
    (\mathit{EA}) & \displaystyle \frac{p\to q}{p\wedge r\to q}.
    \end{array}
    $$
\end{itemize}

\begin{improposition}
\label{prop:dedset:C1-C3}
\textit{A set of formulas $L$ is deductive if and only if $L$ satisfies conditions {\rm{(C1)}}, {\rm{(C2)}}, and~{\rm{(C3)}}.}
\end{improposition}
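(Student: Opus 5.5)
We prove both directions, using the facts recalled above: a set $L$ is deductive iff there is a well-determined $C$ with $C(\varnothing)=L$, and in that case $C=\vec L$ as defined in~\eqref{eq:8}. We also use that \eqref{eq:WD} is equivalent to (B1) and (B2).

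\emph{Necessity.} Let $C$ be well-determined with $C(\varnothing)=L$. We treat the logics under consideration as standard; this is implicit in the identification $C=\vec L$.

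Condition (C1) follows from structurality: $\varepsilon C(\varnothing)\subseteq C(\varnothing)$.

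For (C2), note that $p\wedge p\in C(p)$, because $C(p\wedge p)=C(p,p)=C(p)$ by (B2). Likewise $q\in C(p\wedge q)=C(p,q)$. Applying (B1) gives $p\to p\wedge p\in L$ and $p\wedge q\to q\in L$.

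For (C3), each rule is checked by translating formulas of $L$ into consequence statements through (B1) and (B2), and then using (A1)--(A3):
\begin{itemize}
\item (TR): from $q\in C(p)$ and $r\in C(q)$ we get $r\in C(p)$.
\item (CM): $C(p_1\wedge p_2)=C(p_1,p_2)\ni q_1,q_2$, hence it contains $q_1\wedge q_2$, since $C(q_1,q_2)=C(q_1\wedge q_2)$.
\item (AD): $p,q\in C(\varnothing)$ gives $C(\varnothing)\supseteq C(p,q)=C(p\wedge q)\ni p\wedge q$.
\item (MP): if $p\in C(\varnothing)$ and $q\in C(p)$, then $q\in C(\varnothing)$ by idempotence.
\item (CV): $r\in C(p\wedge q)=C(p,q)=C(q)$, because $p\in C(\varnothing)$.
\item (EA): this is monotonicity.
\end{itemize}

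\emph{Sufficiency.} This is the substantial part. Assume (C1)--(C3) and take $C=\vec L$ from~\eqref{eq:8}. We must show four things: $\vec L$ is a standard consequence, $\vec L(\varnothing)=L$, and $\vec L$ satisfies (B1) and (B2).

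We begin with a toolkit of derived facts in $L$:
\begin{itemize}
\item $p\wedge q\to p$, from $q\wedge p\to p$ together with commutation $p\wedge q\to q\wedge p$. Commutation is obtained by (CM) applied to $p\wedge q\to q$ and $p\wedge q\to p$, and then (TR) with $p\wedge q\to (p\wedge q)\wedge(p\wedge q)$.
\item $p\to p$, from $p\to p\wedge p$ and $p\wedge p\to p$ by (TR).
\item Associativity, and $\varphi\to\psi$ for any two conjunctions $\varphi,\psi$ of the same finite set of formulas. Both follow by induction from the projections, (CM), duplication and (TR).
\end{itemize}
This last fact is the key lemma. Its consequence is that $[\Gamma\to\alpha]^\wedge\subseteq L$ holds iff $\varphi\to\alpha\in L$ for \emph{some} $\varphi\in\Gamma^\ast_{\hspace{-.16ex}\wedge}$, and it makes $\vec L$ insensitive to the choice of conjunction.

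We then verify the required properties of $\vec L$.

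\begin{itemize}
\item $\vec L(\varnothing)=L$. For $\alpha\in L$, take $\Gamma=\{\alpha\}$ and use $\alpha\wedge\alpha\to\alpha$. Conversely, if $\Gamma\subseteq L$ and $\varphi\to\alpha\in L$, then $\varphi\in L$ by (AD), and $\alpha\in L$ by (MP).
\item (A1) uses $\alpha\to\alpha$ (and the conjunction lemma for $\alpha\wedge\alpha$).
\item (A2) is immediate from the definition.
\item (A5) holds by the finiteness built into~\eqref{eq:8}.
\item (A4) follows from (C1).
\item (A3) is the main obstacle. Suppose $\alpha$ is obtained from $\beta_1,\dots,\beta_k\in\vec L(X)$, and each $\beta_i$ from a finite $\Gamma_i\subseteq X\cup L$. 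We must produce a single finite $\Gamma\subseteq X\cup L$ and a conjunction $\varphi$ of it with $\varphi\to\alpha\in L$. Members of $L$ in the premises are removed using (CV). We then combine $\varphi_i\to\beta_i$ by (CM), pass from a conjunction of $\bigcup\Gamma_i$ to $\bigwedge\varphi_i$ via the conjunction lemma and duplication ($p\to p\wedge p$), and finish with (TR). Some care is needed when the premise sets mix elements of $X$ and of $L$: if all premises lie in $L$, (CV) or (MP) reduces to the empty case. We also have to ensure that $\Gamma$ may be taken nonempty; when it would be empty, we use $\alpha\in L$ and $\alpha\wedge\alpha\to\alpha$.
\end{itemize}

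Finally we check (B1) and (B2), or \eqref{eq:WD} directly. The direction $(\Leftarrow)$ of \eqref{eq:WD} requires that whenever $\alpha\in\vec L(\Gamma)$ is witnessed by some $\Delta\subseteq\Gamma\cup L$, we have $\varphi\to\alpha\in L$ for conjunctions $\varphi$ of exactly $\Gamma$. To get this, we drop the $L$-members of $\Delta$ by (CV), and add the missing members of $\Gamma$ by (EA) together with the conjunction lemma. The direction $(\Rightarrow)$ is immediate from the definition of $\vec L$.
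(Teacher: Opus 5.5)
Your plan follows the paper's proof. Necessity is the direct verification of Lemma~\ref{2:IG} through (B1), (B2) and the consequence axioms. Sufficiency is Lemma~\ref{3:IG}: take $C=\vec L$ after first deriving the conjunction facts of Lemma~\ref{1:IG}.

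The one step that fails as written is the start of your toolkit, which is circular. You derive $p\wedge q\to p$ from $q\wedge p\to p$ and commutation $p\wedge q\to q\wedge p$. But you derive commutation by applying $(\mathit{CM})$ to $p\wedge q\to q$ and $p\wedge q\to p$, i.e., from the very formula you are trying to obtain. Since $\mathit{Ax}2$ supplies only the right projection, you need another source for the left one. The fix is to reverse the order, as in Lemma~\ref{1:IG}:
\begin{enumerate}
\item Obtain $p\to p$ from $p\to p\wedge p$ and the instance $p\wedge p\to p$ of $\mathit{Ax}2$ by $(\mathit{TR})$.
\item Apply $(\mathit{EA})$ to $p\to p$ to get $p\wedge q\to p$ directly.
\item Only then obtain commutation: apply $(\mathit{CM})$ to the two projections, then $(\mathit{TR})$ with the instance $p\wedge q\to(p\wedge q)\wedge(p\wedge q)$ of $\mathit{Ax}1$.
\end{enumerate}
With this, your conjunction lemma, the verification of (A1)--(A5), and $\vec L(\varnothing)=L$ go through as you describe.

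A smaller ordering problem of the same kind occurs in the direction $(\Leftarrow)$ of \eqref{eq:WD}. You first delete the $L$-members of $\Delta$ by $(\mathit{CV})$. If $\Delta\subseteq L$, you cannot delete the last conjunct, since $(\mathit{CV})$ never yields an empty antecedent. Do it in the paper's order instead. First add the missing members of $\Gamma$ by $(\mathit{EA})$. Then remove the conjuncts lying in $L\setminus\Gamma$ by $(\mathit{CV})$, rearranging with the conjunction lemma. This way the nonempty set $\Gamma$ always remains in the antecedent.
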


\begin{proof}
This statement is essentially proved in~\cite[Theorem~3]{IG:MR:4}; \insMR{also, it follows from Lemmas~\ref{2:IG} and~\ref{3:IG}}. Namely, in~\cite{IG:MR:4} condition (C2) is slightly different, requiring additionally that $\{p\to p, p\to p\wedge p, p\wedge q\to q\}\subseteq L$, i.e., it demands the presence of the formula $p\to p$ in $L$. It remains to note that the formula $p\to p$ is derivable from the others and can be omitted,
\insMR{see Lemma~\ref{1:IG}.}
\end{proof}

\delMR{Let us introduce one more inference rule:
$$
\begin{array}{ll}
     (\mathit{PR}) & \displaystyle \frac{q}{p\to q}.
\end{array}
$$

\begin{imlemma} \label{l2}
\textit{Let $L$ be a deductive set. Then $L$ is closed under the rule $(\mathit{PR})$.}
\end{imlemma}

\begin{proof}
Let $\varepsilon q\in L$ for some substitution $\varepsilon$. Recall that $L=\vec L(\varnothing)$. By monotonicity we obtain that $\varepsilon q\in \vec L(\varepsilon p)$. Assuming that $\varepsilon p\to\varepsilon q\not\in L$ leads to a contradiction with~{\rm{(B1)}}. Hence, $\varepsilon p\to\varepsilon q\in L$.
\end{proof}
}

\begin{imlemma}
\label{l1}
\textit{Let $L$ be a deductive set, $\beta,\gamma\in \Gamma^\ast_{\hspace{-.16ex}\wedge}$, and $\beta\to\alpha\in L$. Then $\gamma\to\alpha\in L$.}
\end{imlemma}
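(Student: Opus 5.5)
The plan is to work semantically, through the well-determined logic $C=\vec L$ that exists because $L$ is deductive. By \cite[Theorem~9]{IG:MR:3} this logic is unique and satisfies $C(\varnothing)=L$. It also satisfies (B1) and (B2), which are equivalent to \eqref{eq:WD} as shown above. Since $\beta\to\alpha\in L=C(\varnothing)$, (B1) gives $\alpha\in C(\beta)$. The aim is then to show $C(\beta)=C(\gamma)$. Once we have this, $\alpha\in C(\gamma)$, and (B1) in the other direction yields $\gamma\to\alpha\in L$.

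The key step is the identity $C(\beta)=C(\Gamma)$ for every $\beta\in\Gamma^\ast_{\hspace{-.16ex}\wedge}$. I will prove it by induction on the construction of the multiple conjunction $\beta$, where $\beta$ is a conjunction built from formulas of $\Gamma$ with every formula of $\Gamma$ occurring in it. Write $\mathrm{occ}(\beta)$ for the set of conjuncts occurring in $\beta$.

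\emph{Base case.} If $\beta$ is a single formula, then $\beta\in\Gamma$ and, since all formulas of $\Gamma$ occur in $\beta$, we have $\Gamma=\{\beta\}$. The identity is trivial.

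\emph{Inductive step.} If $\beta=\beta_1\wedge\beta_2$, then (B2) gives $C(\beta)=C(\beta_1,\beta_2)$. By induction, $C(\beta_i)=C(\mathrm{occ}(\beta_i))$ for $i=1,2$. Using the closure properties (A1)--(A3), a set may be replaced by any set with the same closure. Hence
$$C(\beta_1,\beta_2)=C(\mathrm{occ}(\beta_1)\cup\mathrm{occ}(\beta_2))=C(\Gamma).$$

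Applying this identity to both $\beta$ and $\gamma$ gives $C(\beta)=C(\Gamma)=C(\gamma)$, which is what the first paragraph needs.

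The main obstacle is making this closure step rigorous. The needed fact is that if $C(X)=C(X')$, then $C(X\cup Y)=C(X'\cup Y)$. This follows from monotonicity and idempotence: $X'\subseteq C(X)\subseteq C(X\cup Y)$, so $C(X'\cup Y)\subseteq C(C(X\cup Y))=C(X\cup Y)$, and the reverse inclusion holds by symmetry. Applying this twice, once for each $\beta_i$, finishes the induction.

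A more direct route would use (WD) for $\Gamma$ itself, since $[\Gamma\to\alpha]^\wedge$ contains both $\beta\to\alpha$ and $\gamma\to\alpha$. However, the direction of (WD) that gives membership in $L$ requires knowing $\alpha\in C(\Gamma)$. That fact is exactly what the identity $C(\beta)=C(\Gamma)$ supplies, so the argument above is needed in either case.
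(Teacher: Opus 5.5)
Your proof is correct, but it takes a different route from the paper's.

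\textbf{The paper's route.} The argument is purely syntactic. By Lemma~\ref{1:IG}, the commutativity, associativity and idempotence laws for $\wedge$ lie in $L$. Hence $\gamma\to\beta\in L$, and one application of $(\mathit{TR})$ to $\gamma\to\beta$ and $\beta\to\alpha$ gives $\gamma\to\alpha\in L$.

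\textbf{Your route.} You pass to the consequence operation instead. (B1) turns $\beta\to\alpha\in L$ into $\alpha\in C(\beta)$. You then prove $C(\beta)=C(\Gamma)=C(\gamma)$ by induction, using (B2) together with (A1)--(A3).

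\textbf{What the paper's route buys.} It is more general: it uses only the closure conditions (C1)--(C3) on $L$ and never a well-determined logic. This generality is essential in the paper. The lemma is reused in the proof of Lemma~\ref{3:IG} to obtain \eqref{eq:8a} for $\vec L$, at a point where $L$ is not yet known to be deductive. Your argument presupposes that $C$ exists, so it could not serve there.

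\textbf{What your route buys.} It is rigorous at the step the paper glosses over. The claim ``it follows that $\beta\leftrightarrow\gamma\in L$'' actually requires replacing subconjunctions inside conjunctions (via $(\mathit{CM})$ and $p\to p$) and absorbing repeated conjuncts, and the paper does not spell this out. Your identity $C(\beta)=C(\Gamma)$ is proved in full. It is also exactly the fact the paper asserts without proof when it derives \eqref{eq:WD} from (B1) and (B2).

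\textbf{Two small remarks.}
\begin{enumerate}
\item The appeal to uniqueness ($C=\vec L$ via \cite[Theorem~9]{IG:MR:3}) is unnecessary. The definition of a deductive set already gives a well-determined $C$ with $C(\varnothing)=L$. Within the paper, the uniqueness result (Theorem~\ref{4:IG}) itself relies on \eqref{eq:8a}, and hence on this very lemma.
\item State the induction for the general claim $C(\delta)=C(\mathrm{occ}(\delta))$, for every conjunction $\delta$ with a fixed construction tree, rather than relative to the fixed $\Gamma$. That general claim is what your inductive step actually uses.
\end{enumerate}
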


\begin{proof}
Note that the formulas
\begin{equation}
\label{eq:lem:l1}
p\wedge q\leftrightarrow q\wedge p, \quad (p\wedge q)\wedge r\leftrightarrow p\wedge (q\wedge r) \quad \mbox{and}  \quad p\wedge p\leftrightarrow p
\end{equation}
belong to $L$ \cite{IG:MR:4}; \insMR{see also Lemma~\ref{1:IG}}. It follows that $\beta\leftrightarrow\gamma\in L$. Applying the rule $(\mathit{TR})$ to the formulas $\beta\to\gamma$ and $\gamma\to\alpha$, we obtain that $\gamma\to\alpha\in L$.
\end{proof}

Proposition~\ref{prop:dedset:C1-C3} allows us to use the following notation: for a set of formulas $\Gamma$, by ${\bigwedge}\Gamma$ we denote some conjunction of all formulas from $\Gamma$. Thanks to Proposition~\ref{prop:dedset:C1-C3}, for a deductive set $L$ the following equivalence holds:
\begin{equation}
\label{eq2:lem:l1:IG}
[\Gamma\to\alpha]^\wedge \subseteq L ~~\iff~~ \mbox{${\bigwedge}\Gamma\to\alpha \in L$.}
\end{equation}
Then from Lemma~\ref{l1} it follows that we can replace the definition \eqref{eq:8} of the operation $\vec L$ by the following:
\begin{equation}
\label{eq:8a}
\alpha\in\vec L(X)
  ~~\iff~~
  \parbox[t]{205pt}{there exists $\Gamma \in \mathcal{P}_{\mathit{fin}}^+(X\cup L)$ such that ${\bigwedge}\Gamma\to\alpha\in L$,}
\end{equation}
\insMR{where ${\bigwedge}\Gamma$ contains no repeated conjuncts.}

\delMR{By ${W}$ we denote the least well-determined logic in the alphabet \mbox{$\langle \Pi, \{\wedge, \to\}, \Upsilon\rangle$}, and from now on we assume that $\lang{S}$ is exactly this alphabet; the logic ${W}$ is called the \defnotion{minimal well-determined logic}. From the criterion for the deductiveness of a set it follows that the logic $W$ can be specified as follows:
\begin{itemize}[noitemsep, topsep=5pt, parsep=1pt]
\item
its set of tautologies, which we denote by $\logic{W}$, is axiomatized by the axioms
$$
\mathit{Ax}1 ~=~ p\to p\wedge p ~~~~\mbox{and}~~~~ \mathit{Ax}2 ~=~ p\wedge q\to q
$$
over the set of rules $\lang{R} = \{(\mathit{TR}), (\mathit{CM}), (\mathit{AD}), (\mathit{CV}), (\mathit{MP}), (\mathit{EA})\}$;
\item every theory of the logic ${W}$ is closed under $(\mathit{AD})$ and under the inference rules from the set
$$
\begin{array}{lcl}
\lang{R}_{\logic{W}}
  & =
  & \displaystyle
    \left\{\frac{\,\Gamma\,}{\alpha} : \mbox{${\bigwedge}\Gamma\to\alpha\in \logic{W}$}\right\}.
\end{array}
$$
\end{itemize}
Note that for the logic ${W}$ the operation $\vec W$ is defined in accordance with \eqref{eq:8}, where one must take the set $\logic{W}$ as~$L$.
}

\insMR{By $\logic{W}$ we denote the least deductive set of formulas in the alphabet \mbox{$\langle \Pi, \{\wedge, \to\}, \Upsilon\rangle$}, and from now on we assume that $\lang{S}$ is exactly this alphabet. 

From the criterion for the deductiveness it follows that $\logic{W}$ is axiomatized by the axioms
$$
\mathit{Ax}1 ~=~ p\to p\wedge p ~~~~\mbox{and}~~~~ \mathit{Ax}2 ~=~ p\wedge q\to q
$$
over the set of rules $\lang{R} = \{(\mathit{TR}), (\mathit{CM}), (\mathit{AD}), (\mathit{CV}), (\mathit{MP}), (\mathit{EA})\}$.

Note that for $\logic{W}$ the operation $\vec{\logic{W}}$ is defined in accordance with \eqref{eq:8}.\footnote{\insMR{The set $\logic{W}$ is the least deductive set when the correspondence between formulas and sequents is given by condition~\eqref{eq:8}.
If conditions other than \eqref{eq:8} are used, the criterion for deductiveness may change.}} 

\insMR{For any deductive set $L$, the logic $\vec L$ is the unique well-determined logic for which $L$ is the set of its tautologies~\cite[Theorem~9]{IG:MR:3}; \insMR{see also Theorem~\ref{4:IG}}. Thus, the operation $\vec{\logic{W}}$ is a well-determined logic. We denote it by $W$ and call it the \defnotion{minimal well-determined logic}.} 
}

\section{Another axiomatization of $\logic{W}$} 
   \setcounter{equation}{0}
   \label{sec:4}

Let $\logic{W}'$ be the set of formulas in the alphabet $\lang{S}=\langle \Pi, \{\wedge, \to\}, \Upsilon\rangle$ derivable in the calculus with the set of axioms $\lang{A}=\{\mathit{Ax}1, \mathit{Ax}2\}$ and the set of inference rules $\lang{R}' = \lang{R}\setminus \{(\mathit{MP})\}$.

To each formula $\varphi$ we associate a maximal (by cardinality) set of formulas $\Gamma$ such that $\varphi\in \Gamma^\ast_{\hspace{-.16ex}\wedge}$; in other words, $\Gamma$ consists of the conjunctive members of the formula $\varphi$ that are not conjunctions of other formulas (in particular, if $\varphi$ is not a conjunction of formulas, then $\Gamma=\{\varphi\}$). The elements of such a set $\Gamma$ are called \defnotion{elementary conjuncts} of the formula $\varphi$. Henceforth, the set of elementary conjuncts of a formula $\varphi$ is denoted by $K(\varphi)$; in particular, $\varphi\in (K(\varphi))^\ast_{\hspace{-.16ex}\wedge}$. It is easy to see that if $L$ is a deductive set, then
$$
\begin{array}{c}
\mbox{$\varphi\leftrightarrow{\bigwedge} K(\varphi) \in L.$}
\end{array}
$$
For a nonempty finite set $\Gamma$ we put
$$
\begin{array}{lcl}
K(\Gamma) & = & \bigcup\limits_{\mathclap{\varphi\in\Gamma}}K(\varphi).
\end{array}
$$
Then it is clear that for a deductive set $L$
\begin{equation}
\label{eq:mr:1}
\mbox{${\bigwedge}\Gamma\leftrightarrow{\bigwedge} K(\Gamma) \in L.$}
\end{equation}

The following assertion is a routine set-theoretic observation.

\begin{imlemma} \label{l3}
\textit{Let $X$, $Y$ and $Z$ be sets and $X\setminus Z\subseteq Y\setminus Z$. Then $X\setminus Y\subseteq Z$ and $X\subseteq Y\cup Z$.}
\end{imlemma}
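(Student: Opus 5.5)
The argument is a direct element-chasing one. I will first prove $X\setminus Y\subseteq Z$ and then obtain $X\subseteq Y\cup Z$ from it.

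For the first inclusion, take an arbitrary $x\in X\setminus Y$ and argue by contradiction: assume $x\notin Z$. Then $x\in X\setminus Z$, and the hypothesis $X\setminus Z\subseteq Y\setminus Z$ gives $x\in Y\setminus Z$. In particular $x\in Y$, which contradicts $x\notin Y$. Therefore $x\in Z$, and so $X\setminus Y\subseteq Z$.

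For the second inclusion, take $x\in X$ and split into two cases. If $x\in Y$, then clearly $x\in Y\cup Z$. If $x\notin Y$, then $x\in X\setminus Y$, so $x\in Z$ by the first part, and again $x\in Y\cup Z$. Hence $X\subseteq Y\cup Z$.

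There is no real obstacle here. The only point needing a little care is that the hypothesis is used only in the weak form $X\setminus Z\subseteq Y$; the subtraction of $Z$ on the right-hand side plays no role. In the later application, presumably with $X$, $Y$, $Z$ being sets of elementary conjuncts $K(\cdot)$, the lemma will be invoked in exactly this form.
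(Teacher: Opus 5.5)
Your element-chasing proof is correct, but it follows a different route from the paper. The paper does not argue with elements. It observes that $(x\wedge\neg z\to y\wedge\neg z)\to(x\wedge\neg y\to z)$ and $(x\wedge\neg z\to y\wedge\neg z)\to(x\to y\vee z)$ are classical tautologies. It then transfers them to sets via the set-theoretic semantics of classical logic: $X,Y,Z$ are read as the values of $x,y,z$, and an inclusion becomes an implication whose value is the whole universe.

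That approach is uniform, since any Boolean inclusion of this kind reduces to checking a tautology. However, it relies on an external metatheorem and does not spell out the translation between global inclusions and pointwise truth of implications. Your argument is self-contained and derives the second inclusion from the first rather than from a separate tautology. It also shows that the hypothesis is really just $X\setminus Z\subseteq Y$. In fact the two forms are equivalent, not merely related by weakening, because no element of $X\setminus Z$ lies in $Z$.

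One small correction to your closing remark. In the applications (Lemma~\ref{l4}, Lemma~\ref{l5}, Theorem~\ref{t2:IG}), $X$ and $Y$ are sets of elementary conjuncts, but $Z$ is $\logic{W}'$, $\logic{W}$, or an adequate set $U$. This does not affect your proof.
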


\begin{proof}
It suffices to note that the formulas $(x\wedge \neg z\to y\wedge \neg z)\to(x\wedge\neg y\to z)$ and $(x\wedge \neg z\to y\wedge \neg z)\to (x\to y\vee z)$, where \linebreak[3] $x$, $y$ and $z$ are propositional variables, are classical tautologies, and then use the completeness of classical logic with respect to set-theoretic semantics.
\end{proof}

The following lemma would be trivial if the set $\lang{R}'$ contained the rule $(\mathit{MP})$; nevertheless, as we shall see, it is not required for the proof.

\begin{imlemma}
\label{l4a}
\textit{Let $\varphi\in \logic{W}'$. Then $K(\varphi)\subseteq \logic{W}'$.}
\end{imlemma}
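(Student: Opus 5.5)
Induction on the length of a derivation of $\varphi$ in the calculus with axioms $\lang{A}$ and rules $\lang{R}'$. The key observation is that every formula in $\logic{W}'$ is either an implication or a conjunction produced by $(\mathit{AD})$. The implication-producing rules therefore yield formulas that are their own unique elementary conjunct, and only $(\mathit{AD})$ creates genuine conjunctions.

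\emph{Base case.} Both axioms $\mathit{Ax}1$ and $\mathit{Ax}2$ are implications, so they are not conjunctions. Hence $K(\varphi)=\{\varphi\}\subseteq\logic{W}'$. (Here the axioms are understood schematically, so all their substitution instances count as axioms.)

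\emph{Inductive step.} Suppose $\varphi$ is obtained by the last rule application, and consider the rules one at a time.
\begin{itemize}
\item If the rule is $(\mathit{TR})$, $(\mathit{CM})$, $(\mathit{CV})$ or $(\mathit{EA})$, then its conclusion has the form $\psi\to\chi$. Its main connective is $\to$, so $K(\varphi)=\{\varphi\}$, and $\varphi\in\logic{W}'$ because it was derived.
\item If the rule is $(\mathit{AD})$, then $\varphi=\alpha\wedge\beta$ with $\alpha,\beta\in\logic{W}'$ having shorter derivations. By the induction hypothesis, $K(\alpha)\subseteq\logic{W}'$ and $K(\beta)\subseteq\logic{W}'$.
\end{itemize}
It remains to check that $K(\alpha\wedge\beta)=K(\alpha)\cup K(\beta)$. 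This follows from the definition of elementary conjuncts: the non-conjunction leaves of the $\wedge$-tree of $\alpha\wedge\beta$ are exactly the leaves of the trees of $\alpha$ and of $\beta$. Hence $K(\varphi)\subseteq\logic{W}'$.

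The only delicate point is making precise that $K$ is well defined and satisfies $K(\alpha\wedge\beta)=K(\alpha)\cup K(\beta)$, including the case where some elementary conjunct occurs in both $\alpha$ and $\beta$. I would argue it by reading $K(\varphi)$ as the set of maximal subformulas of $\varphi$ that are reachable from the root through $\wedge$ only and whose main connective is not $\wedge$. Such a set is clearly maximal with $\varphi\in K(\varphi)^\ast_{\hspace{-.16ex}\wedge}$, and the union identity is immediate from the tree description. The rest is routine, and $(\mathit{MP})$ is never needed because it is absent from $\lang{R}'$.
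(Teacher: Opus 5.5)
Your proof is correct and is essentially the paper's own argument: induction on derivations, where the axioms and the conclusions of $(\mathit{TR})$, $(\mathit{CM})$, $(\mathit{CV})$, $(\mathit{EA})$ are implications (so $K(\varphi)=\{\varphi\}$), and the $(\mathit{AD})$ case follows from the induction hypothesis together with $K(\alpha\wedge\beta)=K(\alpha)\cup K(\beta)$. One small caution about your side remark: the set of $\wedge$-leaves is not literally maximal by cardinality. For example, $(p\wedge q)\wedge(p\wedge q)$ also lies in $\{p\wedge q,p,q\}^\ast_{\wedge}$. So the tree description should be taken as the definition of $K$, as the paper's ``in other words'' clause does, rather than justified via the cardinality wording, under which the union identity would actually fail.
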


\begin{proof}
Induction on the length of derivation of the formula $\varphi$ from the axioms in $\lang{A}$ using rules from $\lang{R}'$.

If $\varphi$ is an axiom from $\lang{A}$ or is obtained by one of the rules from $\lang{R}'\setminus \{(\mathit{AD})\}$, then $\varphi$ is an implication of two formulas, hence $K(\varphi) = \{\varphi\}$, and therefore $K(\varphi)\subseteq \logic{W}'$.

If $\varphi$ is obtained by the rule $(\mathit{AD})$, then $\varphi=\varphi_1\wedge\varphi_2$ for some $\varphi_1$ and $\varphi_2$. By the induction hypothesis, $K(\varphi_1)\subseteq \logic{W}'$ and $K(\varphi_2)\subseteq \logic{W}'$. Since $K(\varphi) = K(\varphi_1)\cup K(\varphi_2)$, we obtain that $K(\varphi)\subseteq \logic{W}'$.
\end{proof}

\insMR{Introduce the following inference rule:\footnote{It is known as \defnotion{paradox of implication}, \defnotion{ex falso quodlibet} or \defnotion{principle of explosion}.}
$$
\begin{array}{ll}
     (\mathit{PR}) & \displaystyle \frac{q}{p\to q}.
\end{array}
$$

\begin{imlemma} \label{l2}
\textit{Let $L$ be a deductive set. Then $L$ is closed under\/ $(\mathit{PR})$.}
\end{imlemma}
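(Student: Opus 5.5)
We need to show: if $q\in L$ then $p\to q\in L$ for any formulas $p,q$ (rules are schemata). Two routes are available: the semantic one through (B1) and the well-determined logic $\vec L$, or a purely syntactic one from (C1)--(C3). I will give the syntactic argument, since it depends only on Proposition~\ref{prop:dedset:C1-C3}.

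Fix formulas $\alpha,\beta$ with $\beta\in L$. The goal is $\alpha\to\beta\in L$.

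First, by (C2) and invariance (C1), the substitution instance $\alpha\wedge\beta\to\beta$ of $p\wedge q\to q$ lies in $L$.

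Second, I would like to remove the conjunct $\alpha$ from the antecedent while keeping $\beta\in L$ available. The rule $(\mathit{CV})$ does exactly this, but only in the order it allows: from $p$ and $p\wedge q\to r$ it yields $q\to r$, so the conjunct to be discarded must be the first one and must itself belong to $L$. I therefore arrange the antecedent as $\beta\wedge\alpha$.

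Third, I obtain $\beta\wedge\alpha\to\beta$. This is where the main difficulty lies, since only $p\wedge q\to q$ is given directly. The cleanest way is through the equivalence $\beta\wedge\alpha\leftrightarrow\alpha\wedge\beta$ from \eqref{eq:lem:l1}, which belongs to $L$ by Lemma~\ref{l1}. Applying $(\mathit{TR})$ to $\beta\wedge\alpha\to\alpha\wedge\beta$ and $\alpha\wedge\beta\to\beta$ gives $\beta\wedge\alpha\to\beta$.

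Finally, I apply $(\mathit{CV})$ with $p:=\beta$, $q:=\alpha$, $r:=\beta$. Its premises are $\beta\in L$ and $\beta\wedge\alpha\to\beta\in L$, so it gives $\alpha\to\beta\in L$, which is the required closure under $(\mathit{PR})$.

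If one prefers not to rely on commutativity, there is an alternative for the third step. Start from $\beta\to\beta$, which is derivable as noted via Lemma~\ref{1:IG}. Then $(\mathit{EA})$ gives $\beta\wedge\alpha\to\beta$ directly, and the conclusion follows by $(\mathit{CV})$ as before.
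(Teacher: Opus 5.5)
Your proof is correct, but it takes a different route from the paper's. The paper argues through the consequence operation. Since $L=\vec L(\varnothing)$, monotonicity gives $\beta\in\vec L(\alpha)$, and (B1) then forces $\alpha\to\beta\in L$. That argument uses only (A2) and (B1) and never touches the criterion.

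You instead use the direction ``deductive $\Rightarrow$ (C1)--(C3)'' of Proposition~\ref{prop:dedset:C1-C3}, which is Lemma~\ref{2:IG}. Its verification does not use (PR), so there is no circularity. You then show that (PR) is a \emph{derived} rule of the calculus. This buys generality. Your second variant takes $\beta\to\beta$ from Lemma~\ref{1:IG}, then applies $(\mathit{EA})$, then $(\mathit{CV})$. It uses only $\mathit{Ax}1$, $\mathit{Ax}2$, invariance, $(\mathit{TR})$, $(\mathit{EA})$ and $(\mathit{CV})$; it needs neither $(\mathit{MP})$ nor a well-determined logic. So it works for any invariant set closed under these rules, in particular for $\logic{W}'$ before Corollary~\ref{c1} shows it is deductive.

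That is exactly the situation in the case $\Theta=\varnothing$ of the proof of Lemma~\ref{l4}. There the paper needs (PR) for $\logic{W}'$ and in fact carries out your derivation inline. The paper's short proof of the lemma, on its own, covers only sets already known to be deductive.

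Your second variant is also the cleaner one, since it avoids commutativity. In the first variant, the membership of $\beta\wedge\alpha\to\alpha\wedge\beta$ in $L$ comes from the proof of Lemma~\ref{l1} (i.e.\ from Lemma~\ref{1:IG}), not from its statement. The statement of Lemma~\ref{l1} does, however, give $\beta\wedge\alpha\to\beta$ from $\alpha\wedge\beta\to\beta$ directly.
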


\begin{proof}
Let $\varepsilon q\in L$ for some substitution $\varepsilon$. Recall that $L=\vec L(\varnothing)$. By monotonicity we obtain that $\varepsilon q\in \vec L(\varepsilon p)$. Assuming that $\varepsilon p\to\varepsilon q\not\in L$ leads to a contradiction with~{\rm{(B1)}}. Hence, $\varepsilon p\to\varepsilon q\in L$.
\end{proof}
}

\begin{imlemma}
\label{l4}
\textit{For any formulas $\alpha$ and $\beta$ the following equivalence holds:
\begin{equation}
\tag{\mbox{$\ast$}}\label{eq:ast}
\begin{array}{lcl}
\alpha\to\beta\in \logic{W}' 
  & \iff 
  & K(\beta)\setminus \logic{W}'\subseteq K(\alpha)\setminus \logic{W}'.
\end{array}
\end{equation}}
\end{imlemma}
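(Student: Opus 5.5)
We prove the two directions of \eqref{eq:ast} separately. The direction $(\Rightarrow)$ goes by induction on derivations in the calculus $\langle\lang{A},\lang{R}'\rangle$. The direction $(\Leftarrow)$ is a direct construction of a derivation from the hypothesis, using Lemma~\ref{l3}.

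\textbf{Direction $(\Leftarrow)$.} Assume $K(\beta)\setminus\logic{W}'\subseteq K(\alpha)\setminus\logic{W}'$. Put $Z=K(\beta)\cap\logic{W}'$; note $Z\subseteq K(\beta)$. By Lemma~\ref{l3}, $K(\beta)\subseteq (K(\beta)\cap K(\alpha))\cup Z$.

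First I would check that the facts used below are available in $\logic{W}'$, i.e.\ without (MP). These are the equivalences \eqref{eq:lem:l1}, $p\to p$, the derived rule (PR), and the equivalence of a formula with ${\bigwedge}K(\cdot)$ in both directions. They should follow from the proofs in Lemma~\ref{1:IG}; any proof of (PR) there that uses (MP) must be replaced. Instead, from $q$ and $p\wedge q\to q$ (an instance of $\mathit{Ax}2$) one gets $p\wedge q\to q$, and then (CV) needs $p$. A better route to (PR): from $q\in\logic{W}'$ and the $\mathit{Ax}2$-instance $q\wedge p\to p$, rule (CV) yields $p\to p$. I would then aim for $p\to q$ via (CM) and (TR) from $p\to p$ and the $q$-part. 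This step needs care.

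With these tools the derivation is as follows:
\begin{itemize}
\item Each $\gamma\in K(\alpha)\cap K(\beta)$ satisfies $\alpha\to\gamma$, by $\alpha\to{\bigwedge}K(\alpha)$, (EA) and $\mathit{Ax}2$ projections.
\item Each $\gamma\in Z$ satisfies $\alpha\to\gamma$ by (PR).
\item Combining these with $\mathit{Ax}1$, (CM) and (TR) gives $\alpha\to{\bigwedge}K(\beta)$, and then $\alpha\to\beta$ by (TR).
\end{itemize}

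\textbf{Direction $(\Rightarrow)$.} Say a pair $(\alpha,\beta)$ is \emph{good} if the right-hand side of \eqref{eq:ast} holds. I would prove that every implication $\alpha\to\beta$ derivable in $\langle\lang{A},\lang{R}'\rangle$ has a good pair, by induction on the length of derivation. Lemma~\ref{l4a} handles the conjunctions produced by (AD), whose implication members are then themselves derived earlier.
\begin{itemize}
\item Axioms: $K(p\wedge p)=K(p)$ and $K(q)\subseteq K(p\wedge q)$, so both axioms give good pairs.
\item (TR): goodness is transitive, since set inclusion is.
\item (CM): $K(q_1\wedge q_2)=K(q_1)\cup K(q_2)$, so good pairs combine.
\item (EA): $K(p\wedge r)\supseteq K(p)$, so goodness is preserved.
\item (CV): from $p\in\logic{W}'$ and a good pair $(p\wedge q,r)$ we need $(q,r)$ good. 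Lemma~\ref{l4a} gives $K(p)\subseteq\logic{W}'$. Hence $K(r)\setminus\logic{W}'\subseteq K(p\wedge q)\setminus\logic{W}'=K(q)\setminus\logic{W}'$.
\end{itemize}

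\textbf{Main obstacle.} The rule (AD) produces conjunctions, not implications, so the induction statement must be formulated over all theorems. I would use: ``every element of $\logic{W}'$ that is an implication $\alpha\to\beta$ has a good pair.'' The inductive hypothesis for (CV) is then applied to its implication premise, and the cases above go through.

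The delicate step is elsewhere: establishing (PR) and the $K$-normal form equivalences inside $\logic{W}'$ without (MP), used in direction $(\Leftarrow)$. I would try to get them from Lemma~\ref{1:IG} if its derivations avoid (MP). Otherwise I would derive them directly, using (CV) as the replacement for (MP).
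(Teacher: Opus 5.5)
Your $(\Rightarrow)$ direction matches the paper's induction case for case, and your plan for $(\Leftarrow)$ is sound. As written, though, $(\Leftarrow)$ has a genuine hole at exactly the step you flag: $(\mathit{PR})$ is never derived in $\logic{W}'$, and every $\gamma\in K(\beta)\setminus K(\alpha)$ depends on it. You cannot borrow Lemma~\ref{l2}. It concerns deductive sets, and $\logic{W}'$ is identified with the deductive set $\logic{W}$ only in Corollary~\ref{c1}, which rests on the present lemma. Your attempt applies $(\mathit{CV})$ to $q$ and $q\wedge p\to p$; that consequent does not contain $q$, so you can only get $p\to p$. The proposed continuation ``via $(\mathit{CM})$ and $(\mathit{TR})$ from $p\to p$'' cannot work either. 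In every instance of $\mathit{Ax}1$ and $\mathit{Ax}2$, each elementary conjunct of the consequent is an elementary conjunct of the antecedent, and $(\mathit{TR})$, $(\mathit{CM})$, $(\mathit{EA})$ preserve this conjunct by conjunct. Hence any derivation of $p\to q$ with $K(q)\not\subseteq K(p)$ must contain a $(\mathit{CV})$ step that deletes an elementary conjunct of $q$ from an antecedent while that conjunct stays in the consequent.

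The missing line is one such step. From $q\to q$ (Lemma~\ref{1:IG}) obtain $q\wedge p\to q$ by $(\mathit{EA})$, then apply $(\mathit{CV})$ with premise $q$ to get $p\to q$. This is exactly the paper's argument in its case $K(\alpha)\cap K(\beta)=\varnothing$. With it your argument is complete. You get $\alpha\to\gamma\in\logic{W}'$ for each $\gamma\in K(\beta)$: by projection if $\gamma\in K(\alpha)$, and by $(\mathit{PR})$ if $\gamma\in\logic{W}'$. Then $\alpha\to\beta$ follows by induction on $\beta$ via $(\mathit{CM})$, $\mathit{Ax}1$ and $(\mathit{TR})$.

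This differs mildly from the paper. Setting $\Theta=K(\alpha)\cap K(\beta)$, $\Xi=K(\alpha)\setminus K(\beta)$ and $\Lambda=K(\beta)\setminus K(\alpha)$, the paper deletes the whole tautologous block $\Lambda$ from the antecedent of ${\bigwedge}(\Xi\cup\Theta\cup\Lambda)\to{\bigwedge}(\Theta\cup\Lambda)$ with a single $(\mathit{CV})$, then appeals to the ${\bigwedge}K(\cdot)$ normal forms. Your pointwise version needs no case split on $\Theta$, and if the assembly is done by induction on $\beta$, it needs no normal-form equivalences at all.
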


\begin{proof}
We prove the implication $(\Rightarrow)$ in~\eqref{eq:ast} by induction on the length of derivation of the formula $\alpha\to\beta$ from the axioms in $\lang{A}$ using rules from $\lang{R}'$. 

{\bf Induction basis.} The set $\lang{A}$ contains two axioms: $\mathit{Ax}1$ and $\mathit{Ax}2$. Consider each of them.
\begin{itemize}[leftmargin=4em, noitemsep, topsep=5pt, parsep=1pt]
\item[$\mathit{Ax}1$:]
In this case $\alpha\to\beta=\alpha\to\alpha\wedge\alpha$. Then $K(\beta)=K(\alpha\wedge\alpha)=K(\alpha)$, hence $K(\beta)\setminus \logic{W}' = K(\alpha)\setminus \logic{W}'$.
\item[$\mathit{Ax}2$:]
In this case $\alpha\to\beta=\gamma\wedge\beta\to\beta$ for some formula $\gamma$. Then $K(\beta)\subseteq K(\gamma\wedge\beta)=K(\alpha)$, hence $K(\beta)\setminus \logic{W}' \subseteq K(\alpha)\setminus \logic{W}'$.
\end{itemize}

{\bf Induction step.}
We prove that the implication $(\Rightarrow)$ in~\eqref{eq:ast} holds for every formula of the form $\alpha\to\beta$ obtained by some inference rule from $\lang{R}'$, under the assumption that for each implication of formulas occurring in the premise of the corresponding rule, the implication $(\Rightarrow)$ in~\eqref{eq:ast} holds.

\begin{itemize}[leftmargin=4em, noitemsep, topsep=5pt, parsep=1pt]
\item[$(\mathit{TR})$:]
Suppose the formula $\alpha\to\beta$ is obtained from formulas $\alpha\to\gamma$ and $\gamma\to \beta$, where $\alpha\to\gamma\in \logic{W}'$ and $\gamma\to \beta\in \logic{W}'$.
By the induction hypothesis, 
$K(\beta)\setminus \logic{W}'\subseteq K(\gamma)\setminus \logic{W}'$ and
$K(\gamma)\setminus \logic{W}'\subseteq K(\alpha)\setminus \logic{W}'$, whence we obtain $K(\beta)\setminus \logic{W}'\subseteq K(\alpha)\setminus \logic{W}'$.
\item[$(\mathit{CM})$:]
Suppose the formula $\alpha\to\beta$ is obtained from formulas $\alpha_1\to\beta_1$ and $\alpha_2\to \beta_2$, where $\alpha_1\to\beta_1\in \logic{W}'$ and $\alpha_2\to \beta_2\in \logic{W}'$; in particular, $\alpha=\alpha_1\wedge\alpha_2$ and $\beta=\beta_1\wedge\beta_2$.
By the induction hypothesis, we have the inclusions
$K(\beta_1)\setminus \logic{W}'\subseteq K(\alpha_1)\setminus \logic{W}'$ and
$K(\beta_2)\setminus \logic{W}'\subseteq K(\alpha_2)\setminus \logic{W}'$, whence we obtain
$$
(K(\beta_1)\setminus \logic{W}')\cup (K(\beta_2)\setminus \logic{W}')~\subseteq~ (K(\alpha_1)\setminus \logic{W}')\cup (K(\alpha_2)\setminus \logic{W}'),
$$
or, equivalently,
$$
(K(\beta_1)\cup K(\beta_2))\setminus \logic{W}'~\subseteq~ (K(\alpha_1)\cup K(\alpha_2))\setminus \logic{W}',
$$
or
$$
K(\beta_1\wedge\beta_2)\setminus \logic{W}'~\subseteq~ K(\alpha_1\wedge\alpha_2)\setminus \logic{W}',
$$
i.e., $K(\beta)\setminus \logic{W}'~\subseteq~ K(\alpha)\setminus \logic{W}'$.

\item[$(\mathit{CV})$:]
Suppose the formula $\alpha\to\beta$ is obtained from formulas $\gamma$ and $\gamma\wedge\alpha\to\beta$. By the induction hypothesis, $K(\beta)\setminus \logic{W}'\subseteq K(\gamma\wedge\alpha)\setminus \logic{W}'$. Moreover, $K(\gamma)\subseteq \logic{W}'$ by Lemma~\ref{l4a}. Then $K(\gamma\wedge\alpha)\setminus \logic{W}' = K(\alpha)\setminus \logic{W}'$, and hence $K(\beta)\setminus \logic{W}'\subseteq K(\alpha)\setminus \logic{W}'$.

\item[$(\mathit{EA})$:]
Suppose the formula $\alpha\to\beta$ is obtained from the formula $\gamma\to\beta$. In this case $\alpha = \gamma\wedge\delta$ for some formula $\delta$. By the induction hypothesis, we have the inclusion $K(\beta)\setminus \logic{W}'\subseteq K(\gamma)\setminus \logic{W}'$, and hence $K(\beta)\setminus \logic{W}'\subseteq K(\gamma\wedge\delta)\setminus \logic{W}' = K(\alpha)\setminus \logic{W}'$.

\item[$(\mathit{AD})$:]
Trivial, because the formula in the conclusion of the rule is not an implication of formulas.
\end{itemize}

We prove the implication $(\Leftarrow)$ in~\eqref{eq:ast}.

Assume $K(\beta)\setminus \logic{W}'\subseteq K(\alpha)\setminus \logic{W}'$.
Put $\Theta = K(\beta)\cap K(\alpha)$, and consider two cases: $\Theta=\varnothing$ and $\Theta\ne\varnothing$.

Suppose $\Theta=\varnothing$.
Then $K(\beta)\setminus \logic{W}'=\varnothing$, because if $\varphi\in K(\beta)\setminus \logic{W}'$, then $\varphi\in K(\alpha)\setminus \logic{W}'$, and hence $\varphi\in \Theta$, which is impossible. Therefore,
$K(\beta)\subseteq \logic{W}$, and taking into account Lemma~\ref{l2}, by $(\mathit{PR})$ we obtain $\beta\in\logic{W}'$. Since $\beta\to\beta\in\logic{W}'$, by $(\mathit{EA})$ we get $\beta\wedge\alpha\to\beta\in\logic{W}'$, and then $\alpha\to\beta\in\logic{W}'$ by $(\mathit{CV})$.

Suppose $\Theta\ne\varnothing$.
Put $\Lambda=K(\beta)\setminus K(\alpha)$ and $\Xi=K(\alpha)\setminus K(\beta)$. Then $\Theta\cup\Lambda=K(\beta)$ and $\Theta\cup\Xi=K(\alpha)$.
Note that $\bigwedge(\Xi\cup\Theta\cup\Lambda)\to\bigwedge(\Theta\cup\Lambda)\in \logic{W}'$ by axiom ${\mathit{Ax}.2}$. By Lemma~\ref{l3}, $\Lambda\subseteq \logic{W}'$. Using $(\mathit{AD})$ we obtain ${\bigwedge}\Lambda\in \logic{W}'$.
Applying $(\mathit{CV})$, we get that
$\bigwedge(\Xi\cup\Theta)\to\bigwedge(\Theta\cup\Lambda)\in \logic{W}'$. It remains to note that
$\bigwedge(\Xi\cup\Theta)\to\bigwedge(\Theta\cup\Lambda)={\bigwedge} K(\alpha)\to {\bigwedge} K(\beta)$.
\end{proof}

\begin{imlemma}
\label{lem:V:consistent}
{Set $\logic{W}'$ contains no atomic formulas.}
\end{imlemma}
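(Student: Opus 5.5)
We argue by induction on the length of a derivation in the calculus with axioms $\lang{A}$ and rules $\lang{R}'$. The claim is that every formula in $\logic{W}'$ is either an implication or a conjunction, and so never a propositional variable. This is a purely syntactic invariant, so we do not need Lemma~\ref{l4} or any semantics.

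\textbf{Base case.} The axioms $\mathit{Ax}1$ and $\mathit{Ax}2$, and every substitution instance of them, have the form $\varphi\to\psi$. They are therefore implications, not atoms.

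\textbf{Inductive step.} We check the conclusion of each rule in $\lang{R}'=\{(\mathit{TR}),(\mathit{CM}),(\mathit{AD}),(\mathit{CV}),(\mathit{EA})\}$.
\begin{itemize}
\item The conclusions of $(\mathit{TR})$, $(\mathit{CM})$, $(\mathit{CV})$ and $(\mathit{EA})$ are written as $\cdot\to\cdot$ in the rule schema. Under any instantiation of the metavariables, the conclusion is still an implication.
\item The conclusion of $(\mathit{AD})$ is a conjunction $\varphi\wedge\psi$.
\end{itemize}
In every case the derived formula has a principal connective, so it is not atomic.

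Notice where $(\mathit{MP})$ was excluded from $\lang{R}'$. That rule is the only one whose conclusion can be an arbitrary formula, including an atom, so its absence is exactly what makes the argument work. The one point to state explicitly is that the rules are schemata: the outermost connective of the conclusion is fixed by the schema and does not depend on the substituted formulas.

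Apart from that, there is no real obstacle. The lemma holds because the rules are non-collapsing with respect to the outermost connective. We may also record the slightly stronger invariant that every element of $\logic{W}'$ is an implication or a conjunction. This gives consistency of $\logic{W}'$ at once, since $p\notin\logic{W}'$ and hence $\logic{W}'\neq\lang{L}$.
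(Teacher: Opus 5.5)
Your proof is correct, but it takes a different route from the paper's. The paper argues semantically. Every axiom in $\lang{A}$ is a classical tautology and every rule in $\lang{R}'$ preserves classical validity, so $\logic{W}'\subseteq\logic{Cl}$; no variable is a classical tautology, so no variable lies in $\logic{W}'$. You instead track a purely syntactic invariant, the outermost connective, using only the shape of the axiom and rule schemata.

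Your argument is more elementary and self-contained, since it needs no two-valued semantics. As you note, though, it depends on the absence of $(\mathit{MP})$, so by itself it says nothing about $\logic{W}$. Theorem~\ref{t1} and the first clause of the decision procedure in Section~\ref{sec:7} need the conclusion for $\logic{W}$, and your version reaches it only through Corollary~\ref{c1}. That is in fact how the paper proceeds.

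The paper's argument, by contrast, is insensitive to $(\mathit{MP})$, because $(\mathit{MP})$ also preserves classical validity. So it yields $\logic{W}\subseteq\logic{Cl}$ directly. It also excludes far more than atoms, for example $p\wedge q$, or $p\to q$ for distinct $p$ and $q$, whereas your invariant excludes only atoms.

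One small correction: the ``slightly stronger invariant'' in your last paragraph is not stronger at all. In a language with only $\wedge$ and $\to$, a formula is non-atomic exactly when it is a conjunction or an implication.
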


\begin{proof}
It suffices to note that $\logic{W}'\subseteq \logic{Cl}$, and $\logic{Cl}$ contains no atomic formulas in the language $\lang{S} = \langle \Pi, \{\wedge, \to\}, \Upsilon\rangle$, since none of them is identically true.
\end{proof}

\begin{imcorollary}
\label{cor:lem4}
\textit{Every formula in $\logic{W}'$ is either an implication of formulas satisfying condition~\eqref{eq:ast} or a conjunction of such implications.}
\end{imcorollary}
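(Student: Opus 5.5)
Since every $\varphi$ satisfies $\varphi\in(K(\varphi))^\ast_{\hspace{-.16ex}\wedge}$, the plan is to decompose a given $\varphi\in\logic{W}'$ into its elementary conjuncts and show that each one is an implication satisfying~\eqref{eq:ast}.

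First, by Lemma~\ref{l4a}, every elementary conjunct $\psi\in K(\varphi)$ belongs to $\logic{W}'$. By definition, an elementary conjunct is not itself a conjunction. In the alphabet $\langle \Pi, \{\wedge, \to\}, \Upsilon\rangle$ every formula is therefore atomic, a conjunction, or an implication, so each such $\psi$ is either atomic or an implication. Lemma~\ref{lem:V:consistent} excludes the atomic case. Hence each $\psi\in K(\varphi)$ has the form $\alpha\to\beta$ with $\alpha\to\beta\in\logic{W}'$. By Lemma~\ref{l4}, direction $(\Rightarrow)$, it satisfies $K(\beta)\setminus\logic{W}'\subseteq K(\alpha)\setminus\logic{W}'$, which is condition~\eqref{eq:ast}.

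Next we distinguish two cases. If $\varphi$ is not a conjunction, then $K(\varphi)=\{\varphi\}$, and $\varphi$ itself is such an implication. Otherwise $\varphi$ is built by $\wedge$ from the members of $K(\varphi)$, so it is a (multiple) conjunction of implications satisfying~\eqref{eq:ast}.

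There is no real obstacle here. The only point that needs care is to state explicitly that "not a conjunction" plus "not atomic" forces an implication, since $\wedge$ and $\to$ are the only connectives. The nontrivial content is already contained in Lemmas~\ref{l4a}, \ref{l4} and~\ref{lem:V:consistent}.
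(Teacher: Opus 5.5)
Your proposal is correct and is essentially the argument the paper leaves implicit, since the paper states this corollary without a separate proof: Lemma~\ref{l4a} puts every elementary conjunct of $\varphi$ into $\logic{W}'$, Lemma~\ref{lem:V:consistent} rules out atoms, and the $(\Rightarrow)$ direction of Lemma~\ref{l4} gives the inclusion in~\eqref{eq:ast} for each resulting implication. You also state explicitly that a formula which is neither atomic nor a conjunction must be an implication, which is the small step the paper passes over.
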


\begin{samepage}
Corollary~\ref{cor:lem4} motivates the introduction of the following notion. A set of formulas $U$ in the language $\lang{S} = \langle \Pi, \{\wedge, \to\}, \Upsilon\rangle$ is called \defnotion{well-structured}, or \defnotion{adequate}, if it satisfies the following conditions:
\begin{itemize}[noitemsep, topsep=5pt, parsep=1pt]
\item
if $\alpha\to\beta \in U$, then $K(\beta)\setminus U\subseteq K(\alpha)\setminus U$;
\item
$\alpha,\beta\in U$ if and only if $\alpha\wedge\beta\in U$;
\item
if $\varepsilon\in \lang{E}_{\!\lang{S}}$, then $\varepsilon U\subseteq U$.
\end{itemize}
\end{samepage}
In other words, a well-structured set $U$ is obtained from some set of implications of formulas satisfying the implication $(\Rightarrow)$ in~\eqref{eq:ast} (with $\logic{W}'$ replaced by $U$) by closing it under taking conjunctions of formulas, under extracting conjunctive members of formulas, and under all possible substitutions.

\begin{imlemma} \label{l5}
\textit{Let $U$ be an adequate set of formulas. Then $U$ is closed under\/ $(\mathit{MP})$.}
\end{imlemma}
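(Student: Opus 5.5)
We have to show: if $\alpha\in U$ and $\alpha\to\beta\in U$, then $\beta\in U$. The plan is to pass to elementary conjuncts and check each one directly, using only the first two defining conditions of an adequate set; invariance under substitutions plays no role here.

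The first step is to relate membership in $U$ to membership of elementary conjuncts. By the second condition of adequacy ($\alpha,\beta\in U$ iff $\alpha\wedge\beta\in U$), an induction on the conjunctive structure of a formula $\varphi$ gives
$$
\varphi\in U \iff K(\varphi)\subseteq U.
$$
If $\varphi$ is not a conjunction, then $K(\varphi)=\{\varphi\}$ and there is nothing to prove. If $\varphi=\varphi_1\wedge\varphi_2$, then $K(\varphi)=K(\varphi_1)\cup K(\varphi_2)$. Moreover, $\varphi\in U$ holds iff both $\varphi_1\in U$ and $\varphi_2\in U$, and the induction hypothesis applies to each of them.

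Now assume $\alpha\in U$ and $\alpha\to\beta\in U$. From $\alpha\in U$ and the equivalence above we get $K(\alpha)\subseteq U$, hence $K(\alpha)\setminus U=\varnothing$. By the first condition of adequacy applied to $\alpha\to\beta\in U$,
$$
K(\beta)\setminus U\subseteq K(\alpha)\setminus U=\varnothing .
$$
Therefore $K(\beta)\subseteq U$, and by the equivalence again $\beta\in U$.

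The only point needing care is the first step. The induction must run over the conjunction tree of $\varphi$, and $K$ must distribute over $\wedge$ exactly as stated. This holds because elementary conjuncts are precisely the maximal non-conjunction subformulas reached by descending through conjunctions (the same fact the paper uses in Lemma~\ref{l4a}). There is no real obstacle beyond this bookkeeping.
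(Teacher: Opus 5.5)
Your proposal is correct and follows essentially the same route as the paper. It uses the conjunction clause of adequacy to get $K(\alpha)\subseteq U$ and to recover $\beta\in U$ from $K(\beta)\subseteq U$, and the implication clause to transfer membership from $K(\alpha)$ to $K(\beta)$. The differences are cosmetic: you note directly that $K(\alpha)\setminus U=\varnothing$ rather than invoking Lemma~\ref{l3} to get $K(\beta)\subseteq K(\alpha)\cup U$, and you spell out the induction behind $\varphi\in U \Leftrightarrow K(\varphi)\subseteq U$, which the paper leaves implicit.
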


\begin{proof}
Let $\alpha,\alpha\to\beta\in U$. We show that $\beta\in U$. Taking into account that $\alpha\in U$, we obtain $K(\alpha)\subseteq U$, because $U$ contains only those conjunctions of formulas whose conjunctive members lie in $U$. Since $\alpha\to\beta\in U$, we have the inclusion $K(\beta)\setminus U\subseteq K(\alpha)\setminus U$, and by Lemma~\ref{l3} we get $K(\beta)\subseteq K(\alpha)\cup U$. But then $K(\beta)\subseteq K(\alpha)\cup U = U$. Finally, from $K(\beta)\subseteq U$ it follows that $\beta\in U$, since $\beta\in (K(\beta))^\ast_{\hspace{-.16ex}\wedge}$.
\end{proof}

From Lemmas~\ref{l4} and~\ref{l5} we obtain the following statement, which means that $(\mathit{MP})$ is eliminable in the definition of~$\logic{W}$:

\begin{imcorollary}
\label{c1}
\textit{$\logic{W}'=\logic{W}$.}
\end{imcorollary}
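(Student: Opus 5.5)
The plan is to prove the two inclusions $\logic{W}'\subseteq\logic{W}$ and $\logic{W}\subseteq\logic{W}'$ separately.

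The first inclusion is immediate. Both sets are generated from the same axioms $\mathit{Ax}1$ and $\mathit{Ax}2$. The set $\logic{W}$ is closed under every rule of $\lang{R}\supseteq\lang{R}'$, while $\logic{W}'$ is the least set containing $\lang{A}$ and closed under $\lang{R}'$. Hence $\logic{W}'\subseteq\logic{W}$.

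For the reverse inclusion, recall that $\logic{W}$ is the least set containing $\lang{A}$ and closed under $\lang{R}$. It therefore suffices to show that $\logic{W}'$ is closed under $(\mathit{MP})$, since it is closed under all other rules of $\lang{R}$ by definition. By Lemma~\ref{l5}, this reduces to showing that $\logic{W}'$ is adequate, i.e., that it satisfies the three defining conditions.

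\emph{Substitution closure.} The axioms are schemata and the rules of $\lang{R}'$ are stated schematically. So applying a substitution $\varepsilon$ to every formula of a derivation of $\varphi$ yields a derivation of $\varepsilon\varphi$.

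\emph{The implication condition.} If $\alpha\to\beta\in\logic{W}'$, then $K(\beta)\setminus\logic{W}'\subseteq K(\alpha)\setminus\logic{W}'$; this is exactly direction $(\Rightarrow)$ of Lemma~\ref{l4}.

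\emph{The conjunction condition.} We need $\alpha,\beta\in\logic{W}'$ iff $\alpha\wedge\beta\in\logic{W}'$. Left to right is the rule $(\mathit{AD})$. For right to left, suppose $\alpha\wedge\beta\in\logic{W}'$. Lemma~\ref{l4a} gives $K(\alpha)\cup K(\beta)=K(\alpha\wedge\beta)\subseteq\logic{W}'$. We must then rebuild $\alpha$ from its elementary conjuncts. Since $\alpha\in(K(\alpha))^\ast_{\hspace{-.16ex}\wedge}$, the formula $\alpha$ is obtained from elements of $K(\alpha)$ by iterated conjunction. Repeated use of $(\mathit{AD})$, following the tree structure of $\alpha$, therefore gives $\alpha\in\logic{W}'$, and likewise $\beta\in\logic{W}'$.

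Once adequacy is established, Lemma~\ref{l5} yields closure under $(\mathit{MP})$, hence $\logic{W}\subseteq\logic{W}'$ and so $\logic{W}'=\logic{W}$.

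The main obstacle is a hidden circularity in Lemma~\ref{l4}. Its $(\Leftarrow)$ direction used Lemma~\ref{l2} and $\beta\to\beta$, which are facts about $\logic{W}$. However, only the $(\Rightarrow)$ direction is needed here, and its proof is a pure induction over $\lang{R}'$ using Lemma~\ref{l4a}, so no circularity arises.
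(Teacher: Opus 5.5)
Your proposal is correct and follows the same route as the paper. The inclusion $\logic{W}'\subseteq\logic{W}$ is immediate, and the reverse comes from showing that $\logic{W}'$ is adequate via Lemma~\ref{l4} and then applying Lemma~\ref{l5} to get closure under $(\mathit{MP})$; you additionally spell out the substitution and conjunction clauses of adequacy, which the paper leaves implicit, and you rightly note that only the non-circular direction $(\Rightarrow)$ of Lemma~\ref{l4} is needed.
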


\begin{proof}
By the definitions of the sets $\logic{W}'$ and $\logic{W}$, the inclusion $\logic{W}'\subseteq \logic{W}$ holds. Note that by Lemma~\ref{l4}, the set $\logic{W}'$ is adequate. Then from Lemma~\ref{l5} it follows that $\logic{W}'=\logic{W}$.
\end{proof}

Another corollary of Lemmas~\ref{l4} and~\ref{l5} (or of Corollary~\ref{c1} obtained from them) is the following theorem.

\begin{imtheorem}
\label{t1}
\textit{Logic $W$ is consistent.}
\end{imtheorem}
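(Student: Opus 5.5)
The plan is to exhibit a formula outside the set of tautologies of $W$, namely a propositional variable. Recall that $W=\vec{\logic{W}}$, so the set of tautologies of $W$ is $\vec{\logic{W}}(\varnothing)=\logic{W}$; this follows from the fact that $\vec L$ is a well-determined logic with $L=\vec L(\varnothing)$ for any deductive $L$, which is how $W$ was introduced. Consistency of $W$ therefore amounts to $\logic{W}\neq\lang{L}$.

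The main step is to combine Corollary~\ref{c1} with Lemma~\ref{lem:V:consistent}. By Corollary~\ref{c1}, $\logic{W}=\logic{W}'$. By Lemma~\ref{lem:V:consistent}, $\logic{W}'$ contains no atomic formulas. Hence $p\notin\logic{W}$ for any $p\in\Pi$, and so $\logic{W}\neq\lang{L}$, which is exactly the consistency of $W$.

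Corollary~\ref{c1} matters here because Lemma~\ref{lem:V:consistent} was proved only for $\logic{W}'$, the closure without $(\mathit{MP})$. Its argument, that $\logic{W}'\subseteq\logic{Cl}$, rests on checking that $\mathit{Ax}1$ and $\mathit{Ax}2$ are classical tautologies and that each rule in $\lang{R}'$ preserves classical validity; this verification is routine. Without Corollary~\ref{c1} we would need the same soundness check for $(\mathit{MP})$ as well, which also holds.

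A direct alternative avoids Corollary~\ref{c1}: every rule of $\lang{R}$, including $(\mathit{MP})$, preserves classical tautologyhood, so $\logic{W}\subseteq\logic{Cl}$, and $p\notin\logic{Cl}$. Along the lines suggested before the theorem, Lemma~\ref{l5} gives an additional argument: $\logic{W}'$ is adequate, hence closed under $(\mathit{MP})$, hence equal to $\logic{W}$. I expect no real obstacle. The only point needing care is identifying the tautology set of $W$ with $\logic{W}$, which comes from the definition $W=\vec{\logic{W}}$ and the uniqueness result cited from \cite{IG:MR:3}.
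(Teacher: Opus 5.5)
Your proposal is correct and matches the paper's proof, which obtains the theorem directly from Corollary~\ref{c1} and Lemma~\ref{lem:V:consistent}. Your side remark also holds: every rule of $\lang{R}$, including $(\mathit{MP})$, preserves classical validity, so $\logic{W}\subseteq\logic{Cl}$ can be checked directly, and Corollary~\ref{c1} is not actually needed for this theorem.
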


\begin{proof}
This follows from Corollary~\ref{c1} together with Lemma~\ref{lem:V:consistent}.
\end{proof}

\delMR{
\begin{imtheorem}
\label{t2}
\textit{Let\/ $\Gamma\in \mathcal{P}_{\mathit{fin}}(\lang{L})$ and\/ $\Gamma\cap \logic{W}=\varnothing$. Then
$$
\begin{array}{lcl}
W(\Gamma)
  & =
  & \bigcup \{\Delta^\ast_{\hspace{-.16ex}\wedge} : \Delta \in \mathcal{P}_{\mathit{fin}}^+(\logic{W} \cup K(\Gamma))\}.
\end{array}
$$}
\end{imtheorem}

\begin{proof}
We prove that the specified sets are included in one another, whence their equality follows.

Let $\varphi\in {W}(\Gamma)$. Then, by the definition of the operation $\vec W$, there exists $\Theta\in\mathcal{P}_{\mathit{fin}}^+(\logic{W}\cup\Gamma)$ such that ${\bigwedge}\Theta\to\varphi\in \logic{W}$. By Lemma~\ref{l4} and Corollary~\ref{c1}, $K(\varphi)\setminus \logic{W}\subseteq K(\Theta)\setminus \logic{W}$, and hence $K(\varphi)\cup \logic{W}\subseteq K(\Theta)\cup \logic{W}$. Consequently,
$$
\begin{array}{lclclcl}
K(\varphi)
  & \subseteq
  & \logic{W} \cup K(\varphi)
  & \subseteq
  & \logic{W} \cup K(\Theta)
  & \subseteq
  & \logic{W} \cup K(\Gamma).
\end{array}
$$
Since $\varphi\in (K(\varphi))^\ast_{\hspace{-.16ex}\wedge}$, there exists $\Delta\in\mathcal{P}_{\mathit{fin}}^+((K(\varphi))^\ast_{\hspace{-.16ex}\wedge})$ such that $\varphi\in \Delta^\ast_{\hspace{-.16ex}\wedge}$. Because we have the inclusion $K(\varphi)\subseteq\logic{W} \cup K(\Gamma)$, it follows that $\Delta \in \mathcal{P}_{\mathit{fin}}^+(\logic{W} \cup K(\Gamma))$, and hence $\varphi\in \bigcup \{\Delta^\ast_{\hspace{-.16ex}\wedge} : \Delta \in \mathcal{P}_{\mathit{fin}}^+(\logic{W} \cup K(\Gamma))\}$.

Let $\varphi\in\Delta^\ast_{\hspace{-.16ex}\wedge}$ for some $\Delta \in \mathcal{P}_{\mathit{fin}}^+(\logic{W} \cup K(\Gamma))$. Then $\varphi$ is a multiple conjunction of all formulas from $\Delta$, and hence ${\bigwedge}\Delta\to\varphi\in\logic{W}$, and by~\eqref{eq:8a}, $\varphi\in {W}(K(\Gamma))$. Using~\eqref{eq:mr:1} and $(\mathit{TR})$, we obtain ${W}(K(\Gamma))={W}(\Gamma)$, and consequently $\varphi\in {W}(\Gamma)$.
\end{proof}
}

\section{Theories of $\vec{\logic{W}}$ and axiomatization of $\vec{\logic{W}}$}
   \setcounter{equation}{0}
   \label{sec:before:5}

\insMR{We have defined the set of tautologies of the logic $W$, and our next goal is to describe the structure of the theories of this logic. We specify a set of inference rules with respect to which these theories are closed; this allows us to present the logic $W$ as a calculus.}

\insMR{
\begin{imtheorem}
\label{t2}
\textit{Let\/ $\Gamma\in \mathcal{P}_{\mathit{fin}}(\lang{L})$ and\/ $\Gamma\cap \logic{W}=\varnothing$. Then
$$
\begin{array}{lcl}
W(\Gamma)
  & =
  & \bigcup \{\Delta^\ast_{\hspace{-.16ex}\wedge} : \Delta \in \mathcal{P}_{\mathit{fin}}^+(\logic{W} \cup K(\Gamma))\}.
\end{array}
$$}
\end{imtheorem}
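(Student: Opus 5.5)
We prove the equality by establishing the two inclusions. Throughout we use Corollary~\ref{c1}, so Lemma~\ref{l4} may be applied with $\logic{W}$ in place of $\logic{W}'$. We also use the description \eqref{eq:8a} of $W=\vec{\logic{W}}$.

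\emph{Inclusion $\subseteq$.} Let $\varphi\in W(\Gamma)$. By \eqref{eq:8a} there is a nonempty finite $\Theta\subseteq \logic{W}\cup\Gamma$ with ${\bigwedge}\Theta\to\varphi\in\logic{W}$. Lemma~\ref{l4} (the direction $\Rightarrow$ of \eqref{eq:ast}) gives
\[
K(\varphi)\setminus\logic{W}\subseteq K({\textstyle\bigwedge}\Theta)\setminus\logic{W}=K(\Theta)\setminus\logic{W}.
\]
Lemma~\ref{l3} then yields $K(\varphi)\subseteq K(\Theta)\cup\logic{W}$.

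Next we need $K(\Theta)\subseteq K(\Gamma)\cup\logic{W}$. If $\theta\in\Theta\cap\logic{W}$, then Lemma~\ref{l4a} gives $K(\theta)\subseteq\logic{W}$. If $\theta\in\Gamma$, then $K(\theta)\subseteq K(\Gamma)$ by definition. Hence $K(\varphi)\subseteq \logic{W}\cup K(\Gamma)$. Put $\Delta=K(\varphi)$; it is nonempty and finite, and $\varphi\in\Delta^\ast_{\hspace{-.16ex}\wedge}$ by the definition of elementary conjuncts. This gives the inclusion.

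\emph{Inclusion $\supseteq$.} Let $\varphi\in\Delta^\ast_{\hspace{-.16ex}\wedge}$ with $\Delta$ a nonempty finite subset of $\logic{W}\cup K(\Gamma)$. Then $K(\varphi)=K(\Delta)$. By Lemma~\ref{l4} (direction $\Leftarrow$), ${\bigwedge}\Delta\to\varphi\in\logic{W}$; alternatively this follows from the commutativity and associativity equivalences of Lemma~\ref{l1}. So by \eqref{eq:8a}, $\varphi\in W(\logic{W}\cup K(\Gamma))=W(K(\Gamma))$, the last equality holding because $W(\varnothing)=\logic{W}$ and by idempotence.

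It remains to show $W(K(\Gamma))\subseteq W(\Gamma)$. If $\Gamma=\varnothing$, then $K(\Gamma)=\varnothing$ and there is nothing to prove. Otherwise, for each $\kappa\in K(\Gamma)$, choose $\gamma\in\Gamma$ with $\kappa\in K(\gamma)$. Since $\gamma\to\gamma\in\logic{W}$, Lemma~\ref{l4} gives $\gamma\to\kappa\in\logic{W}$, because $K(\kappa)=\{\kappa\}\subseteq K(\gamma)$. Thus $\kappa\in W(\Gamma)$, and monotonicity and idempotence finish the argument. Equivalently, one can use \eqref{eq:mr:1} with (TR).

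The main obstacle is the first inclusion, specifically handling the members of $\Theta$ that come from $\logic{W}$ rather than from $\Gamma$. This is exactly where Lemma~\ref{l4a} is needed. The hypothesis $\Gamma\cap\logic{W}=\varnothing$ appears not to be essential to the argument; we would check that it is not silently needed anywhere, for instance in treating empty $\Gamma$ versus $\mathcal{P}^+_{\mathit{fin}}$.
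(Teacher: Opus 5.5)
Your proof is correct and follows the paper's argument. For $\subseteq$ it uses \eqref{eq:8a}, then Lemma~\ref{l4} with Corollary~\ref{c1}, then Lemma~\ref{l3} with $\Delta=K(\varphi)$; for $\supseteq$ it uses ${\bigwedge}\Delta\to\varphi\in\logic{W}$, \eqref{eq:8a}, and $W(K(\Gamma))=W(\Gamma)$. Your appeal to Lemma~\ref{l4a} for $K(\Theta)\subseteq\logic{W}\cup K(\Gamma)$ makes explicit a step the paper leaves implicit, and you are right that the hypothesis $\Gamma\cap\logic{W}=\varnothing$ is not used in the paper's proof either.
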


\begin{proof}
We prove that the specified sets are included in one another, whence their equality follows.

Let $\varphi\in {W}(\Gamma)$. Then, by the definition of the operation $W$, there exists $\Theta\in\mathcal{P}_{\mathit{fin}}^+(\logic{W}\cup\Gamma)$ such that ${\bigwedge}\Theta\to\varphi\in \logic{W}$. By Lemma~\ref{l4} and Corollary~\ref{c1}, $K(\varphi)\setminus \logic{W}\subseteq K(\Theta)\setminus \logic{W}$, and hence $K(\varphi)\cup \logic{W}\subseteq K(\Theta)\cup \logic{W}$. Consequently,
$$
\begin{array}{lclclcl}
K(\varphi)
  & \subseteq
  & \logic{W} \cup K(\varphi)
  & \subseteq
  & \logic{W} \cup K(\Theta)
  & \subseteq
  & \logic{W} \cup K(\Gamma).
\end{array}
$$
Since $\varphi\in (K(\varphi))^\ast_{\hspace{-.16ex}\wedge}$, there exists $\Delta\in\mathcal{P}_{\mathit{fin}}^+((K(\varphi))^\ast_{\hspace{-.16ex}\wedge})$ such that $\varphi\in \Delta^\ast_{\hspace{-.16ex}\wedge}$. Because we have the inclusion $K(\varphi)\subseteq\logic{W} \cup K(\Gamma)$, it follows that $\Delta \in \mathcal{P}_{\mathit{fin}}^+(\logic{W} \cup K(\Gamma))$, and hence $\varphi\in \bigcup \{\Delta^\ast_{\hspace{-.16ex}\wedge} : \Delta \in \mathcal{P}_{\mathit{fin}}^+(\logic{W} \cup K(\Gamma))\}$.

Let $\varphi\in\Delta^\ast_{\hspace{-.16ex}\wedge}$ for some $\Delta \in \mathcal{P}_{\mathit{fin}}^+(\logic{W} \cup K(\Gamma))$. Then $\varphi$ is a multiple conjunction of all formulas from $\Delta$, and hence ${\bigwedge}\Delta\to\varphi\in\logic{W}$, and by~\eqref{eq:8a}, $\varphi\in {W}(K(\Gamma))$. Using~\eqref{eq:mr:1} and $(\mathit{TR})$, we obtain ${W}(K(\Gamma))={W}(\Gamma)$, and consequently $\varphi\in {W}(\Gamma)$.
\end{proof}
}

\insMR{
We introduce two inference rules allowing to remove conjuncts from a conjunction of formulas:\footnote{These are \defnotion{left simplification} and \defnotion{right simplification}, aka \defnotion{left conjunction elimination} and \defnotion{right conjunction elimination}.}
$$
\begin{array}{llcll}
(\mathit{SL}) & \displaystyle \frac{p\wedge q}{q} &\mbox{{}$\quad$and$\quad${}}&
(\mathit{SR}) & \displaystyle \frac{p\wedge q}{p}.
\end{array}
$$
Set $\lang{R}'' = \set{(\mathit{AD}), (\mathit{SL}), (\mathit{SR})}$.

\begin{imtheorem}
\label{t2:IG}
\textit{A set $T$ is a theory of $W$ if and only if $\logic{W}\subseteq T$ and $T$ is closed under the rules from $\lang{R}''$.}
\end{imtheorem}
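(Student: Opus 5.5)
We prove the two directions separately. Throughout, a \emph{theory} of $W$ means a set of the form $W(X)$, and by finitariness $W(X)=\bigcup\{W(\Gamma):\Gamma\in\mathcal{P}_{\mathit{fin}}(X)\}$, which we use freely in both directions.

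\textbf{($\Rightarrow$).} Let $T=W(X)$. Then $\logic{W}=W(\varnothing)\subseteq W(X)$ by monotonicity. For the rules we use the description~\eqref{eq:8a}. If $\alpha,\beta\in T$, then, since $\alpha\wedge\beta\to\alpha\wedge\beta\in\logic{W}$ (this is $p\to p$, Lemma~\ref{1:IG}), we get $\alpha\wedge\beta\in W(\alpha,\beta)\subseteq W(T)=T$ by idempotence; alternatively, use (B2) directly. If $\alpha\wedge\beta\in T$, then $\alpha\wedge\beta\to\beta\in\logic{W}$ by $\mathit{Ax}2$, so $\beta\in W(\alpha\wedge\beta)\subseteq T$. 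For $(\mathit{SR})$ we need $\alpha\wedge\beta\to\alpha\in\logic{W}$. This follows from $\mathit{Ax}2$ applied to $\beta\wedge\alpha$, combined via $(\mathit{TR})$ with the commutativity formula in~\eqref{eq:lem:l1}.

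\textbf{($\Leftarrow$).} Let $\logic{W}\subseteq T$ with $T$ closed under $\lang{R}''$. We show $W(T)\subseteq T$; since $T\subseteq W(T)$, this gives $T=W(T)$, so $T$ is a theory. Take $\varphi\in W(T)$. By~\eqref{eq:8a} there is a nonempty finite $\Theta\subseteq T\cup\logic{W}=T$ with ${\bigwedge}\Theta\to\varphi\in\logic{W}$.

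Closure under $(\mathit{SL})$ and $(\mathit{SR})$ gives $K(\Theta)\subseteq T$, by induction on the conjunctive structure of each element of $\Theta$. By Lemma~\ref{l4} and Corollary~\ref{c1}, ${\bigwedge}\Theta\to\varphi\in\logic{W}$ yields $K(\varphi)\setminus\logic{W}\subseteq K({\bigwedge}\Theta)\setminus\logic{W}$. Here $K({\bigwedge}\Theta)=K(\Theta)$, which should be checked from the definition of elementary conjuncts. Lemma~\ref{l3} then gives $K(\varphi)\subseteq K(\Theta)\cup\logic{W}\subseteq T$.

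Finally, $\varphi\in(K(\varphi))^\ast_{\hspace{-.16ex}\wedge}$, so $\varphi$ is built from members of $K(\varphi)$ by $\wedge$. Closure of $T$ under $(\mathit{AD})$, applied by induction on this construction, yields $\varphi\in T$.

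This is essentially the argument of Theorem~\ref{t2}, with the hypothesis $\Gamma\cap\logic{W}=\varnothing$ avoided. The hypothesis is unnecessary because $\logic{W}\subseteq T$ is assumed outright.

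The main subtle point is Lemma~\ref{l4}, which is the only place where the specific structure of $\logic{W}$ enters. It is exactly what lets a $W$-consequence of $T$ be reassembled by $(\mathit{AD})$ from elementary conjuncts already in $T$. The remaining work consists of routine checks: the identity $K({\bigwedge}\Theta)=K(\Theta)$ and the derivability of $\alpha\wedge\beta\to\alpha$.
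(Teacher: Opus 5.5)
Your proof is correct. The $(\Leftarrow)$ direction matches the paper's argument. You take a finite $\Theta\subseteq T$ with ${\bigwedge}\Theta\to\varphi\in\logic{W}$ and apply Lemma~\ref{l4} with Corollary~\ref{c1}. Lemma~\ref{l3} then gives $K(\varphi)\subseteq K(\Theta)\cup\logic{W}\subseteq T$, and $(\mathit{AD})$ reassembles $\varphi$. You also state explicitly the identity $K({\bigwedge}\Theta)=K(\Theta)$, which the paper uses silently, and you avoid its separate case $\varphi\in\logic{W}$.

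The $(\Rightarrow)$ direction takes a different route. The paper writes $T=W(\Gamma)$ and passes to $\Gamma'=\Gamma\setminus\logic{W}$. It then reads off closure under $\lang{R}''$ from Theorem~\ref{t2}, which describes $W(\Gamma')$ as $\bigcup\{\Delta^\ast_{\wedge}:\Delta\in\mathcal{P}^+_{\mathit{fin}}(\logic{W}\cup K(\Gamma'))\}$. You instead check closure directly. You use only monotonicity and idempotence of $W$ together with the tautologies $p\to p$ (or (B2)), $p\wedge q\to q$ and $p\wedge q\to p$.

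Your version is more elementary and more general. It uses nothing specific to $\logic{W}$ beyond these formulas, so it shows that the theories of any well-determined logic contain its tautologies and are closed under $(\mathit{AD})$, $(\mathit{SL})$ and $(\mathit{SR})$. It also works verbatim for an infinite generating set $X$. Theorem~\ref{t2}, by contrast, is stated only for finite $\Gamma$, so the paper's appeal to it tacitly needs a finitariness reduction. It also needs an unstated check that the set described there really is closed under the three rules. In return, the paper's route ties the result to the explicit structure of theories, which it then packages as Theorem~\ref{t3:IG}.

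Your opening remark about finitariness is never actually used in your argument and can be dropped.
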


\begin{proof}
Let $T$ be a theory of $W$, i.e., $T=W(\Gamma)=\vec{\logic{W}}(\Gamma)$ for some~$\Gamma$. By~\eqref{eq:8a}, $\logic{W}\subseteq\vec{\logic{W}}(\Gamma)$, and hence $\logic{W}\subseteq T$.
Let $\Gamma'=\Gamma\setminus\logic{W}$. Then $\Gamma'\cap \logic{W}=\varnothing$, therefore, by Theorem~\ref{t2}, the theory $W(\Gamma')$ is closed under the rules from $\lang{R}''$. It remains to note that $W(\Gamma')=W(\Gamma)=T$.

Suppose $\logic{W}\subseteq T$ and $T$ is closed under the rules from $\lang{R}''$. We show that $T=W(T)$. The inclusion $T\subseteq W(T)$ is obvious, and we need to prove $W(T)\subseteq T$.
Let $\alpha\in W(T)$. If $\alpha\in\logic{W}$, then $\alpha\in T$ since $\logic{W}\subseteq T$. Suppose $\alpha\not\in\logic{W}$. Since $\alpha\in W(T)$, there exists a finite set $\Gamma\subseteq T$ such that ${\bigwedge}\Gamma\to\alpha\in \logic{W}$. Then, by Lemma~\ref{l4} together with Corollary~\ref{c1}, $K(\alpha)\setminus \logic{W} \subseteq K(\Gamma)\setminus \logic{W}$, and hence, by Lemma~\ref{l3}, $K(\alpha) \subseteq K(\Gamma)\cup \logic{W}$. Since $\Gamma\subseteq T$ and $T$ is closed under $(\mathit{SL})$ and $(\mathit{SR})$, we obtain $K(\Gamma)\subseteq T$, and since $\logic{W}\subseteq T$, we have $K(\Gamma)\cup \logic{W}\subseteq T$, and consequently $K(\alpha)\subseteq T$. Applying $(\mathit{AD})$ to $K(\alpha)$ yields $\alpha\in T$.
Thus $T=W(T)$, and $T$ is a theory of~$W$.
\end{proof}

From Theorems~\ref{t2} and~\ref{t2:IG} it directly follows 
\begin{imtheorem}
\label{t3:IG}
\textit{Logic $W$ is defined by the following calculus:
\begin{itemize}[noitemsep, topsep=5pt, parsep=1pt]
\item
the set of axioms\/ $\logic{W}$ of $W$ is the set of all formulas derivable from the axioms $\lang{A}$ over the set of rules~$\lang{R}'$;
\item 
the theories of $W$ are the sets of formulas that contain $\logic{W}$ and are closed under the rules from~$\lang{R}''$.
\end{itemize}}
\end{imtheorem}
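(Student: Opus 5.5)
The statement has two parts, and I will treat them separately, reducing each to results already proved.

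For the first item, recall that $\logic{W}$ was defined as the least deductive set. By Proposition~\ref{prop:dedset:C1-C3} it is axiomatized by $\lang{A}$ over $\lang{R}$. The set $\logic{W}'$ is, by definition, exactly the set of formulas derivable from $\lang{A}$ over $\lang{R}'=\lang{R}\setminus\{(\mathit{MP})\}$. Corollary~\ref{c1} states $\logic{W}'=\logic{W}$, so the first item is just a restatement of that corollary. To make this self-contained, I would recall how Corollary~\ref{c1} was obtained. Lemma~\ref{l4} shows that $\logic{W}'$ satisfies the implication clause of adequacy. Closure under conjunction and its converse follows from $(\mathit{AD})$ and Lemma~\ref{l4a}. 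Invariance holds because $\lang{R}'$ consists of schematic rules and $\lang{A}$ consists of schemata. Lemma~\ref{l5} then gives closure under $(\mathit{MP})$, so $\logic{W}'$ is closed under all of $\lang{R}$ and hence contains $\logic{W}$.

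For the second item, I would appeal to Theorem~\ref{t2:IG}. It asserts directly that $T$ is a theory of $W$ exactly when $\logic{W}\subseteq T$ and $T$ is closed under $\lang{R}''=\{(\mathit{AD}),(\mathit{SL}),(\mathit{SR})\}$. It remains to connect this with the notion of ``defined by a calculus'' from Section~\ref{sec:2}. For an arbitrary $X$, I would check that $W(X)$ is the least set containing $X\cup\logic{W}$ and closed under $\lang{R}''$.

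\begin{itemize}
\item First, $W(X)$ is such a set, by Theorem~\ref{t2:IG} together with $X\subseteq W(X)$ (extensiveness).
\item Conversely, let $T$ contain $X\cup\logic{W}$ and be closed under $\lang{R}''$. By Theorem~\ref{t2:IG}, $T$ is a theory, so $T=W(T)$. Monotonicity then gives $W(X)\subseteq W(T)=T$.
\end{itemize}

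So $W(X)$ is the least such set, which is exactly the second item.

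The only delicate point I expect is the use of Theorem~\ref{t2} inside Theorem~\ref{t2:IG}, where one passes from $\Gamma$ to $\Gamma'=\Gamma\setminus\logic{W}$. That passage was done there for finite $\Gamma$. For infinite $X$, I would use finitariness of $\vec{\logic{W}}$ (definition \eqref{eq:8a}), which writes $W(X)$ as the union of $W(\Gamma)$ over finite $\Gamma\subseteq X$. I would also argue directly that $W(X)$ is closed under $(\mathit{SL})$, $(\mathit{SR})$ and $(\mathit{AD})$ via Axiom~$\mathit{Ax}2$ and conjunction manipulations inside \eqref{eq:8a}. This keeps the second item independent of any finiteness assumption.
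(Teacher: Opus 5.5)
Your proposal is correct and follows the paper's own derivation: the first item is Corollary~\ref{c1} ($\logic{W}'=\logic{W}$) and the second is Theorem~\ref{t2:IG}, and your minimality check (via extensiveness, monotonicity and $T=W(T)$) only spells out what ``defined by the calculus'' means in the sense of Section~\ref{sec:2}. Your remark about finiteness is well taken: the proof of Theorem~\ref{t2:IG} applies Theorem~\ref{t2}, which is stated only for finite $\Gamma$, to an arbitrary $\Gamma\setminus\logic{W}$. Either of your patches closes this gap, and the direct one is the simplest. The rules $(\mathit{AD})$, $(\mathit{SL})$, $(\mathit{SR})$ correspond to members $\bigwedge\Gamma\to\alpha$ of $\logic{W}$ (e.g.\ $\mathit{Ax}2$ for $(\mathit{SL})$), so every theory is closed under them by idempotence of $W$.
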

}

\section{Frege relation for $\vec{\logic{W}}$}
   \setcounter{equation}{0}
   \label{sec:5}

For a logic $C$ we define the relation $\dashv\vdash_C$: for any formulas $\alpha$ and $\beta$ of the language of $C$ we set
$$
\begin{array}{lcl}
\alpha\dashv\vdash_C\beta & \bydef & \mbox{$\alpha\vdash_C \beta$ and $\beta\vdash_C \alpha$.}
\end{array}
$$
The relation $\dashv\vdash_C$ is called the \defnotion{interderivability relation}\footnote{This relation is sometimes called the \defnotion{Frege relation}, see e.g.~\cite[Section~2.1]{IG:MR:FJP:2003}.} of formulas in $C$, and formulas in this relation are called \defnotion{interderivable} in $C$. 

It is easy to see that the interderivability relation of formulas in a logic $C$ is an equivalence, and hence partitions the set of all formulas of the language into equivalence classes. For a formula $\alpha$, the equivalence class containing $\alpha$ is denoted by $[\alpha]_C$, i.e.,
$$
\begin{array}{lcl}
[\alpha]_C & = & \set{\beta\in\lang{L} : \alpha\dashv\vdash_C\beta}, 
\end{array}
$$
where $\lang{L}$ is the language of $C$.
Let $\lang{L}/C$ be the set of all such equivalence classes, i.e.,
$$
\begin{array}{lcl}
\lang{L}/C & = & \set{[\alpha]_C : \alpha\in \lang{L}}. 
\end{array}
$$

In what follows we take the logic $W$ as $C$; in particular, this means that the language $\lang{L}$ contains only $\wedge$ and $\to$ as connectives. We simplify the notation: we write $\vdash$ instead of $\vdash_W$, and also $\dashv\vdash$ instead of $\dashv\vdash_W$ and $[\alpha]$ instead of $[\alpha]_W$.

The following technical lemma is almost obvious.

\begin{imlemma}
\label{lem:bwedge}
\textit{Let $\alpha\vdash \alpha'$ and $\beta\vdash \beta'$. Then\/ $\alpha\wedge\beta\vdash \alpha'\wedge\beta'$.}
\end{imlemma}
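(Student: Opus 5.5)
The plan is to go through the closure properties of theories given by Theorem~\ref{t2:IG}, which avoids manipulating implications in $\logic{W}$ directly. Write $\alpha\vdash\alpha'$ as $\alpha'\in W(\alpha)$, and similarly $\beta'\in W(\beta)$. We want $\alpha'\wedge\beta'\in W(\alpha\wedge\beta)$.

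First, set $T=W(\alpha\wedge\beta)$. By Theorem~\ref{t2:IG}, $T$ contains $\logic{W}$ and is closed under $(\mathit{SL})$, $(\mathit{SR})$ and $(\mathit{AD})$. Since $\alpha\wedge\beta\in T$ by extensiveness (A1), applying $(\mathit{SR})$ and $(\mathit{SL})$ gives $\alpha\in T$ and $\beta\in T$.

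Next, by monotonicity (A2), $\{\alpha\}\subseteq T$ gives $W(\alpha)\subseteq W(T)$. By idempotence (A3), $W(T)=T$. Hence $\alpha'\in T$, and in the same way $\beta'\in T$.

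Finally, closure of $T$ under $(\mathit{AD})$ gives $\alpha'\wedge\beta'\in T=W(\alpha\wedge\beta)$, that is, $\alpha\wedge\beta\vdash\alpha'\wedge\beta'$.

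There is no real obstacle. One only needs $W=\vec{\logic{W}}$ to be a consequence operation, so that (A1)--(A3) are available; the paper asserts this in Section~\ref{sec:3} via \cite[Theorem~9]{IG:MR:3}. An alternative route through (B2) would give $W(\alpha\wedge\beta)=W(\alpha,\beta)\supseteq W(\alpha)\cup W(\beta)$ and then use $(\mathit{AD})$ in the same way. A purely syntactic route would apply $(\mathit{CM})$ to $\alpha\to\alpha'$ and $\beta\to\beta'$ in $\logic{W}$ via (B1), which is equally short.
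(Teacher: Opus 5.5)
Your proof is correct and is essentially the paper's argument. The paper obtains $\alpha,\beta\vdash\alpha'$ and $\alpha,\beta\vdash\beta'$ by monotonicity, applies $(\mathit{AD})$, and then passes from $W(\alpha,\beta)$ to $W(\alpha\wedge\beta)$ by (B2), which is exactly your sketched alternative; your main version simply uses closure of $W(\alpha\wedge\beta)$ under $(\mathit{SL})$ and $(\mathit{SR})$ (Theorem~\ref{t2:IG}) where the paper uses (B2).
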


\begin{proof}
Since $\alpha,\beta\vdash\alpha'$ and $\alpha,\beta\vdash\beta'$, by $(\mathit{AD})$ we obtain $\alpha,\beta\vdash\alpha'\wedge\beta'$, and then $\alpha\wedge\beta\vdash\alpha'\wedge\beta'$ by (B2) with $C=W$.
\end{proof}

If $\alpha\dashv\vdash\alpha'$ and $\beta\dashv\vdash\beta'$, then by Lemma~\ref{lem:bwedge}, $[\alpha\wedge\beta]=[\alpha'\wedge\beta']$. This allows us to introduce the following operation $\bwedge$ on $\lang{L}/W$:
\begin{equation}
\label{eq:bwedge}
\begin{array}{lcl}
[\alpha]\bwedge[\beta]& = & [\alpha\wedge\beta].
\end{array}
\end{equation}

Since the formulas~\eqref{eq:lem:l1} belong to the set $\logic{W}$, the operation $\bwedge$ is commutative, associative, and idempotent on $\lang{L}/W$, and hence the structure $\otuple{\lang{L}/W, \bwedge}$ is a semilattice.

On the semilattice $\otuple{\lang{L}/W, \bwedge}$ we introduce a partial order in the usual way:
$$
\begin{array}{lcl}
[\alpha]\leqslant[\beta] & \bydef & [\alpha]\bwedge[\beta]=[\alpha].
\end{array}
$$
It is not hard to see that this semilattice has a greatest element, which is ${W}(\varnothing)$, i.e., $\logic{W}$.

\begin{improposition}
\label{prop:leq}
\label{t4}
\textit{For any formulas $\alpha$ and $\beta$ the following equivalence holds:
\begin{equation}
\tag{\mbox{${\ast}{\ast}$}}\label{eq:2ast}
\begin{array}{lcl}
\alpha\to\beta\in \logic{W} & \iff & [\alpha]\leqslant[\beta].
\end{array}
\end{equation}}
\end{improposition}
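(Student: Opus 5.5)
We unfold the definition of $\leqslant$. By definition, $[\alpha]\leqslant[\beta]$ means $[\alpha\wedge\beta]=[\alpha]$, i.e.\ $\alpha\wedge\beta\dashv\vdash\alpha$. The direction $\alpha\wedge\beta\vdash\alpha$ always holds: it follows from (B2) (since $\alpha\in W(\alpha,\beta)=W(\alpha\wedge\beta)$), or directly because $\alpha\wedge\beta\to\alpha\in\logic{W}$ is obtained from $\mathit{Ax}2$ and commutativity~\eqref{eq:lem:l1} by $(\mathit{TR})$. Hence $[\alpha]\leqslant[\beta]$ is equivalent to $\alpha\vdash\alpha\wedge\beta$. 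By (B1) for the well-determined logic $W$, this is in turn equivalent to $\alpha\to\alpha\wedge\beta\in\logic{W}$. So the whole proposition reduces to
$$
\alpha\to\beta\in\logic{W} \iff \alpha\to\alpha\wedge\beta\in\logic{W}.
$$

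For $(\Leftarrow)$ we apply $(\mathit{TR})$ to $\alpha\to\alpha\wedge\beta$ and $\alpha\wedge\beta\to\beta$, the latter being an instance of $\mathit{Ax}2$. For $(\Rightarrow)$ there are two routes. The syntactic one takes $\alpha\to\alpha\wedge\alpha$ ($\mathit{Ax}1$), then applies $(\mathit{CM})$ to $\alpha\to\alpha$ (from Lemma~\ref{1:IG}) and $\alpha\to\beta$ to get $\alpha\wedge\alpha\to\alpha\wedge\beta$, and finishes with $(\mathit{TR})$. Alternatively, we can use Lemma~\ref{l4} with Corollary~\ref{c1}: since $K(\alpha\wedge\beta)=K(\alpha)\cup K(\beta)$, the condition $K(\alpha\wedge\beta)\setminus\logic{W}\subseteq K(\alpha)\setminus\logic{W}$ is equivalent to $K(\beta)\setminus\logic{W}\subseteq K(\alpha)\setminus\logic{W}$. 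This gives both directions at once and serves as a cross-check.

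There is no real obstacle. The only point needing care is that the passage from $\alpha\vdash\alpha\wedge\beta$ to membership of the implication in $\logic{W}$ uses (B1) for $W$. This holds because $W=\vec{\logic{W}}$ is well-determined, as recorded at the end of Section~\ref{sec:3}. We also need $\alpha\to\alpha\in\logic{W}$, which Lemma~\ref{1:IG} supplies.
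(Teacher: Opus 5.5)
Your proof is correct. The $(\Leftarrow)$ direction is exactly the paper's argument: from $[\alpha]=[\alpha\wedge\beta]$ you get $\alpha\to\alpha\wedge\beta\in\logic{W}$ by (B1), then apply $(\mathit{TR})$ with $\mathit{Ax}2$.

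Your $(\Rightarrow)$ direction takes a genuinely different route. The paper proves $(\Rightarrow)$ by induction on derivations of $\alpha\to\beta$ from $\mathit{Ax}1$, $\mathit{Ax}2$ using the rules of $\lang{R}'$. It checks that each axiom and rule preserves $[\alpha]\leqslant[\beta]$ in the Lindenbaum semilattice. Because the induction runs over $(\mathit{MP})$-free derivations, it depends on Corollary~\ref{c1}. It has the same shape as the proofs of Lemma~\ref{l4} and of the soundness Lemma~\ref{lem:for:t5}, which keeps the exposition uniform.

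You instead note that $[\alpha]\leqslant[\beta]$ just says $\alpha\dashv\vdash\alpha\wedge\beta$, and that the half $\alpha\wedge\beta\vdash\alpha$ is automatic. The other half is, by (B1), the same as $\alpha\to\alpha\wedge\beta\in\logic{W}$. You get this from $\alpha\to\beta$ in three steps: $\mathit{Ax}1$, $(\mathit{CM})$ applied to $\alpha\to\alpha$ and $\alpha\to\beta$, then $(\mathit{TR})$.

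This avoids both the induction and the $(\mathit{MP})$-elimination. It uses only (C1)--(C3) and the well-determinedness of $\vec L$, so the same argument works verbatim for every deductive set $L$ in place of $\logic{W}$, giving $\alpha\to\beta\in L \iff [\alpha]_{\vec L}\leqslant[\beta]_{\vec L}$. Your cross-check via Lemma~\ref{l4} is also correct. It is the one place where you do rely on Corollary~\ref{c1}, so that variant is specific to $\logic{W}$.
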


\begin{proof}
We prove the implication $(\Rightarrow)$ in \eqref{eq:2ast} by induction on the length of derivation of the formula $\alpha\to\beta$ from the axioms in $\lang{A}$ using rules from $\lang{R}'$.

{\bf Induction basis.} The set $\lang{A}$ contains two axioms: $\mathit{Ax}1$ and $\mathit{Ax}2$. Consider each of them.
\begin{itemize}[leftmargin=4em, noitemsep, topsep=5pt, parsep=1pt]
\item[$\mathit{Ax}1$:]
In this case $\alpha\to\beta=\alpha\to\alpha\wedge\alpha$. Since $[\alpha]=[\alpha\wedge\alpha]=[\beta]$, we obtain $[\alpha]\leqslant[\beta]$.
\item[$\mathit{Ax}2$:]
In this case $\alpha\to\beta=\gamma\wedge\beta\to\beta$ for some formula $\gamma$. Then $[\alpha]\bwedge[\beta]=[\alpha\wedge\beta]=[\gamma\wedge\beta\wedge\beta]=[\gamma\wedge\beta]=[\alpha]$, and hence $[\alpha]\leqslant[\beta]$.
\end{itemize}

{\bf Induction step.}
We prove that the implication $(\Rightarrow)$ in~\eqref{eq:2ast} holds for every formula $\alpha\to\beta$ obtained by some inference rule from $\lang{R}'$, under the assumption that for each implication of formulas occurring in the premise of the corresponding rule, the implication $(\Rightarrow)$ in~\eqref{eq:2ast} holds.

\begin{itemize}[leftmargin=4em, noitemsep, topsep=5pt, parsep=1pt]

\item[$(\mathit{TR})$:]
Suppose the formula $\alpha\to\beta$ is obtained from formulas $\alpha\to\gamma$ and $\gamma\to \beta$, where $\alpha\to\gamma\in \logic{W}$ and $\gamma\to \beta\in \logic{W}$.
By the induction hypothesis, 
$[\alpha]\leqslant[\gamma]$ and $[\gamma]\leqslant[\beta]$. Then by transitivity of the relation $\leqslant$ we obtain $[\alpha]\leqslant[\beta]$.

\item[$(\mathit{CM})$:]
Suppose the formula $\alpha\to\beta$ is obtained from formulas $\alpha_1\to\beta_1$ and $\alpha_2\to \beta_2$, where $\alpha_1\to\beta_1\in \logic{W}$ and $\alpha_2\to \beta_2\in \logic{W}$; in particular, $\alpha=\alpha_1\wedge\alpha_2$ and $\beta=\beta_1\wedge\beta_2$.
By the induction hypothesis, $[\alpha_1]\leqslant[\beta_1]$ and $[\alpha_2]\leqslant[\beta_2]$. Then
$$
[\alpha\wedge\beta] = 
[\alpha_1\wedge\beta_1]\bwedge[\alpha_2\wedge\beta_2] = 
[\alpha_1]\bwedge[\alpha_2] = 
[\alpha], 
$$
and hence $[\alpha]\leqslant[\beta]$.

\item[$(\mathit{CV})$:]
Suppose the formula $\alpha\to\beta$ is obtained from formulas $\gamma$ and $\gamma\wedge\alpha\to\beta$. 
By the induction hypothesis, $[\gamma\wedge\alpha]\leqslant[\beta]$. But $[\gamma]=\logic{W}$, therefore $[\gamma\wedge\alpha]=[\gamma]\bwedge[\alpha]=[\alpha]$
and hence $[\alpha]\leqslant[\beta]$.

\item[$(\mathit{EA})$:]
Suppose the formula $\alpha\to\beta$ is obtained from the formula $\gamma\to\beta$. In this case $\alpha = \gamma\wedge\delta$ for some formula $\delta$. By the induction hypothesis, 
$[\gamma]\leqslant[\beta]$. But $[\gamma\wedge\delta]\bwedge[\gamma]=[\gamma\wedge\delta]$, hence $[\gamma\wedge\delta]\leqslant[\gamma]$, and by transitivity of $\leqslant$ we obtain $[\alpha]\leqslant[\beta]$.

\item[$(\mathit{AD})$:]
Trivial, because the formula in the conclusion of the rule is not an implication of formulas.
\end{itemize}

We prove the implication $(\Leftarrow)$ in~\eqref{eq:2ast}. Suppose $[\alpha]\leqslant[\beta]$. Then $[\alpha]\bwedge[\beta]=[\alpha]$. Since $[\alpha]\bwedge[\beta]=[\alpha\wedge\beta]$, we obtain $[\alpha]=[\alpha\wedge\beta]$, and consequently $\alpha\dashv\vdash\alpha\wedge\beta$. Taking into account that the operation ${W}$ satisfies condition (B1), we conclude $\alpha\to\alpha\wedge\beta\in\logic{W}$. It remains to note that $\alpha\wedge\beta\to\beta\in\logic{W}$, and then by $(\mathit{TR})$ we obtain $\alpha\to\beta\in\logic{W}$.
\end{proof}

\begin{imcorollary}
\label{c3}
\textit{For any formulas $\alpha$ and $\beta$ the following equivalence holds:
$$
\begin{array}{lcl}
[\alpha]\leqslant[\beta] & \iff & [\alpha\to\beta]=\logic{W}.
\end{array}
$$}
\end{imcorollary}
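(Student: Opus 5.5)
The plan is to combine Proposition~\ref{prop:leq} with the fact, noted just before it, that the greatest element of $\otuple{\lang{L}/W, \bwedge}$ is the class $\logic{W}$. By \eqref{eq:2ast}, $[\alpha]\leqslant[\beta]$ holds exactly when $\alpha\to\beta\in\logic{W}$. It therefore suffices to check that, for any formula $\gamma$, we have $\gamma\in\logic{W}$ if and only if $[\gamma]=\logic{W}$. We then apply this with $\gamma=\alpha\to\beta$.

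For the direction $(\Rightarrow)$, let $\gamma\in\logic{W}$. We show that $\gamma\dashv\vdash\delta$ for every $\delta\in\logic{W}$, and that no formula outside $\logic{W}$ is interderivable with $\gamma$.
\begin{itemize}
\item Since $\logic{W}=W(\varnothing)\subseteq W(\gamma)$ by monotonicity, every $\delta\in\logic{W}$ satisfies $\gamma\vdash\delta$. Symmetrically, $\delta\vdash\gamma$ because $\gamma\in\logic{W}\subseteq W(\delta)$. Hence $\logic{W}\subseteq[\gamma]$.
\item Conversely, let $\delta\in[\gamma]$. Then $\delta\in W(\gamma)$. Since $\gamma\in W(\varnothing)$, idempotence and monotonicity give $W(\gamma)\subseteq W(W(\varnothing))=W(\varnothing)$. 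So $\delta\in\logic{W}$.
\end{itemize}
Thus $[\gamma]=\logic{W}$.

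For the direction $(\Leftarrow)$, assume $[\gamma]=\logic{W}$. Since $\gamma\in[\gamma]$ always holds, we get $\gamma\in\logic{W}$ at once.

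Chaining the two equivalences gives the corollary:
$$
[\alpha]\leqslant[\beta] \iff \alpha\to\beta\in\logic{W} \iff [\alpha\to\beta]=\logic{W}.
$$
There is no real obstacle here. The only point requiring care is that $\logic{W}$ is a single equivalence class, and not merely a set of formulas that interderive one another. The second bullet of the $(\Rightarrow)$ direction settles this using only the consequence axioms (A2) and (A3).
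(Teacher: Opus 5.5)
Your proof is correct and follows the paper's: the paper also reduces the corollary to Proposition~\ref{prop:leq} plus the observation that $[\gamma]=\logic{W}$ if and only if $\gamma\in\logic{W}$, which it simply asserts. Your derivation of that observation from (A1)--(A3) spells out what the paper leaves to the reader: reflexivity of $\dashv\vdash$ gives one direction, and monotonicity with idempotence gives the other.
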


\begin{proof}
It suffices to note that $[\alpha\to\beta]=\logic{W}$ if and only if $\alpha\to\beta\in\logic{W}$.
\end{proof}

Thanks to Lemma~\ref{lem:bwedge}, we were able to define a binary operation on the set $\lang{L}/W$, assigning to the classes $[\alpha]$ and $[\beta]$ the class $[\alpha\wedge\beta]$, see~\eqref{eq:bwedge}. We show that for implication the situation is different: such an operation does not exist.

\begin{improposition}
\label{prop:diff}
\textit{Let $\alpha\to\beta\not\in\logic{W}$, $\alpha'\in[\alpha]$, $\beta'\in[\beta]$, and $\alpha\to\beta\ne\alpha'\to\beta'$. Then $[\alpha\to\beta]\ne[\alpha'\to\beta']$.}
\end{improposition}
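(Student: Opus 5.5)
The plan is to show directly that $\alpha\to\beta\dashv\vdash\alpha'\to\beta'$ is impossible. I will use only the explicit description of finitely axiomatized theories from Theorem~\ref{t2}, together with the fact that an implication has itself as its only elementary conjunct. The hypotheses $\alpha'\in[\alpha]$ and $\beta'\in[\beta]$ will not be needed; all that matters is that $\alpha\to\beta\notin\logic{W}$ and that the two implications are syntactically different.

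Suppose, towards a contradiction, that $[\alpha\to\beta]=[\alpha'\to\beta']$. Then in particular $\alpha'\to\beta'\in W(\alpha\to\beta)$.

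First I compute $W(\alpha\to\beta)$. Put $\Gamma=\{\alpha\to\beta\}$. Since $\alpha\to\beta\notin\logic{W}$, we have $\Gamma\cap\logic{W}=\varnothing$, so Theorem~\ref{t2} applies. An implication is not a conjunction, so $K(\alpha\to\beta)=\{\alpha\to\beta\}$. Hence
$$W(\alpha\to\beta)=\bigcup\{\Delta^\ast_{\hspace{-.16ex}\wedge} : \Delta\in\mathcal{P}_{\mathit{fin}}^+(\logic{W}\cup\{\alpha\to\beta\})\}.$$
Every element of this set is a multiple conjunction of formulas from $\logic{W}\cup\{\alpha\to\beta\}$. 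The formula $\alpha'\to\beta'$ is an implication, not a conjunction, so it can lie in some $\Delta^\ast_{\hspace{-.16ex}\wedge}$ only when $\Delta$ is a singleton $\{\delta\}$ with $\delta=\alpha'\to\beta'$. Therefore either $\alpha'\to\beta'=\alpha\to\beta$, which the hypothesis excludes, or $\alpha'\to\beta'\in\logic{W}$.

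It remains to rule out $\alpha'\to\beta'\in\logic{W}$. In that case $[\alpha'\to\beta']=\logic{W}$, because a tautology $\gamma$ satisfies $W(\gamma)=W(\varnothing)=\logic{W}$; this holds since $\vec{\logic{W}}(X)=\logic{W}$ whenever $X\subseteq\logic{W}$, by \eqref{eq:8a} and closure of $\logic{W}$ under $(\mathit{MP})$. Consequently $\alpha\to\beta\in W(\alpha'\to\beta')=\logic{W}$, contradicting the assumption $\alpha\to\beta\notin\logic{W}$.

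The only delicate step is the syntactic observation that an implication cannot be a nontrivial multiple conjunction, so that membership in some $\Delta^\ast_{\hspace{-.16ex}\wedge}$ forces $\Delta$ to be a singleton. This is immediate from the unique readability of terms.
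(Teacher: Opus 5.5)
Your proof is correct and is essentially the paper's argument. Both rest on the same fact: an implication is its own only elementary conjunct, so by Lemma~\ref{l4} any implication derivable from the non-tautology $\alpha\to\beta$ is either $\alpha\to\beta$ itself or a member of $\logic{W}$.

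The difference is only in packaging. You reach this fact through the description of $W(\alpha\to\beta)$ in Theorem~\ref{t2}, which is itself derived from Lemma~\ref{l4}, and then rule out the tautology case separately via $W(\gamma)=\logic{W}$ for $\gamma\in\logic{W}$. The paper instead uses (B1) to turn $\alpha\to\beta\dashv\vdash\alpha'\to\beta'$ into two members of $\logic{W}$. It then reads off $\{\alpha'\to\beta'\}\setminus\logic{W}=\{\alpha\to\beta\}\setminus\logic{W}=\{\alpha\to\beta\}$ from Lemma~\ref{l4} in a single step, with no case split.
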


\begin{proof}
It suffices to note that
$$
\begin{array}{lclclcl}
K(\alpha\to\beta)
  & =
  & \set{\alpha\to\beta}
  & \ne
  & \set{\alpha'\to\beta'}
  & =
  & K(\alpha'\to\beta'),
\end{array}
$$
and then $[\alpha\to\beta]\ne[\alpha'\to\beta']$ by Lemma~\ref{l4} taking into account (B1).
\end{proof}

Thus, for example, if $p$ and $q$ are distinct propositional variables, then $[p]=[p\wedge p]$, but $[p\to q]\ne[p\wedge p\to q]$. Moreover, Proposition~\ref{prop:diff} implies that if $\alpha\to\beta\not\in\logic{W}$, then the set
$$
\set{[\alpha'\to\beta'] : \text{$\alpha'\in[\alpha]$ and $\beta'\in[\beta]$}}
$$
is infinite: it suffices to take as $\alpha'$ various multiple conjunctions composed of $\alpha$, or as $\beta'$ various multiple conjunctions composed of $\beta$.

From Proposition~\ref{prop:diff} it also follows that there exists a set of formulas $\Gamma$ such that ${W}(\Gamma)$ is not closed under the operation of replacement of equivalent subformulas. Indeed, taking again two variables $p$ and $q$, we have $p\lra p\wedge p \in \logic{W}$, but $p\wedge p\to q\not\in {W}(p\to q)$ and $p\to q\not\in {W}(p\wedge p\to q)$.

However, by Proposition~\ref{prop:leq}, we can define a partial function $\brhu$ on $\lang{L}/W$ by setting for any formulas $\alpha$ and $\beta$
$$
\begin{array}{lcl}
[\alpha]\brhu[\beta]
  & =
  & \left\{
    \begin{array}{rl}
    \logic{W} & \text{if $[\alpha]\leqslant[\beta]$;}\\
    \text{undefined} & \text{if $[\alpha]\not\leqslant[\beta]$.}
    \end{array}
    \right.
\end{array}
$$
As we will see below, this approach is fully justified; moreover, it allows constructing a semantics for the logic~$W$.

\section{Semantics for $\vec{\logic{W}}$}
   \setcounter{equation}{0}
   \label{sec:6}

A tuple $\bm{M}=\otuple{M,\wedge,\top}$ is called a \defnotion{$W$-algebra}, or a \defnotion{$W$-matrix}, if
\begin{itemize}[noitemsep, topsep=5pt, parsep=1pt]
\item $\otuple{M,\wedge}$ is a semilattice;
\item $\top$ is the greatest element in the partially ordered set $\otuple{M,\leqslant}$, where $a\leqslant b$ means $a\wedge b = a$. 
\end{itemize}

A function $v\colon\Pi\to M$ is called a \defnotion{valuation} on the matrix $\bm{M}$. Suppose we have a function $\mu\colon\lang{L}\to M$. We extend the valuation $v$ to a function ${v}_\mu\colon\lang{L}\to M$ as follows:
$$
\begin{array}{lcl}
{v}_\mu(p) & = & v(p),\quad \text{if $p\in\Pi$;}
  \medskip\\
{v}_\mu(\varphi\wedge\psi) 
  & = 
  & {v}_\mu(\varphi)\wedge{v}_\mu(\psi);
    \medskip\\
{v}_\mu(\varphi\to\psi) 
  & = 
  & \left\{
    \begin{array}{rl}
    \top & \text{if ${v}_\mu(\varphi)\leqslant{v}_\mu(\psi)$;}\\
    \mu(\varphi\to\psi) & \text{if ${v}_\mu(\varphi)\not\leqslant{v}_\mu(\psi)$.}
    \end{array}
    \right.
    \\
\end{array}
$$
The function ${v}_\mu$ is called a \defnotion{$\mu$-extension} of the valuation $v$ in the $W$-matrix $\bm{M}$.

We say that a formula $\varphi$ is \defnotion{true on the $W$-matrix $\bm{M}$ under the $\mu$-extension $v$} if ${v}_\mu(\varphi)=\top$; in this case we write $\bm{M}\models^v_\mu\varphi$. 

We say that a formula $\varphi$ is \defnotion{true on the $W$-matrix $\bm{M}$ under the valuation $v$} if $\bm{M}\models^v_\mu\varphi$ for every $\mu$-extension $v$; in this case we write $\bm{M}\models^v\varphi$. 

We say that a formula $\varphi$ is \defnotion{true on the $W$-matrix $\bm{M}$} if $\bm{M}\models^v\varphi$ for every valuation $v$; in this case we write $\bm{M}\models\varphi$.

Let $\mathcal{C}$ be a class of $W$-matrices.
We say that a formula $\varphi$ is \defnotion{true in the class $\mathcal{C}$} if $\varphi$ is true on every $W$-matrix from $\mathcal{C}$; in this case we write $\mathcal{C}\models\varphi$.

Let $\mathcal{W}$ be the class of all $W$-matrices.

\begin{imlemma}
\label{lem:for:t5}
\textit{Let ${v}_\mu$ be a \defnotion{$\mu$-extension} of some valuation $v$ in a $W$-matrix $\bm{M}=\otuple{M,\wedge,\top}$. Then, for every formula $\varphi\in\logic{W}$,
\begin{enumerate}[leftmargin=4em, noitemsep, topsep=5pt, parsep=1pt]
\renewcommand{\labelenumi}{$(\arabic{enumi})$}
\item \label{lem:for:t5:item:1}
$v_\mu(\varphi)=\top$\textup{;}
\item \label{lem:for:t5:item:2}
if $\varphi=\alpha\to\beta$, then $v_\mu(\alpha)\leqslant v_\mu(\beta)$.
\end{enumerate}}
\end{imlemma}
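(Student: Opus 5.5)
We prove both items simultaneously by induction on the length of a derivation of $\varphi$ from the axioms $\lang{A}$ by the rules of $\lang{R}'$; by Corollary~\ref{c1} this covers every $\varphi\in\logic{W}$, and it lets us avoid $(\mathit{MP})$. The two claims reinforce each other. If $\varphi=\alpha\to\beta$, then item~(2) gives item~(1) at once, because $v_\mu(\alpha)\leqslant v_\mu(\beta)$ forces $v_\mu(\alpha\to\beta)=\top$ by the definition of the $\mu$-extension. So for implications it is enough to establish $v_\mu(\alpha)\leqslant v_\mu(\beta)$. The only formulas in $\logic{W}'$ that are not implications arise from $(\mathit{AD})$. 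In that case $v_\mu(\varphi_1\wedge\varphi_2)=v_\mu(\varphi_1)\wedge v_\mu(\varphi_2)=\top\wedge\top=\top$ by the induction hypothesis.

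For the base, $\mathit{Ax}1$ gives $v_\mu(\alpha)\leqslant v_\mu(\alpha)\wedge v_\mu(\alpha)=v_\mu(\alpha)$ by idempotence. $\mathit{Ax}2$ gives $v_\mu(\gamma)\wedge v_\mu(\beta)\leqslant v_\mu(\beta)$, which is a semilattice fact. The rules are handled as follows.
\begin{itemize}
\item $(\mathit{TR})$ follows from transitivity of $\leqslant$.
\item $(\mathit{CM})$ follows from monotonicity of $\wedge$: if $a_1\leqslant b_1$ and $a_2\leqslant b_2$, then $a_1\wedge a_2\leqslant b_1\wedge b_2$.
\item $(\mathit{EA})$ follows from $a\wedge c\leqslant a\leqslant b$.
\item $(\mathit{CV})$: the premises are $\gamma$ and $\gamma\wedge\alpha\to\beta$. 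The induction hypothesis (item~(1) for $\gamma$, item~(2) for the implication) gives $v_\mu(\gamma)=\top$ and $v_\mu(\gamma)\wedge v_\mu(\alpha)\leqslant v_\mu(\beta)$. Since $\top$ is the greatest element, $\top\wedge a=a$, hence $v_\mu(\alpha)\leqslant v_\mu(\beta)$.
\end{itemize}

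One point needs care. The induction must run over derivations in $\logic{W}'$, and the hypothesis must be applied to the premise formulas themselves, which are also in $\logic{W}'$. We also need item~(2) for every implication in $\logic{W}$, not just for conclusions of rules. This is immediate, since every implication in $\logic{W}'$ is an axiom instance or the conclusion of some rule other than $(\mathit{AD})$, and $\mu$ is fixed throughout. The main obstacle is precisely the $(\mathit{MP})$ issue. With the original axiomatization, an $(\mathit{MP})$ step would yield $\beta$ from $\alpha$ and $\alpha\to\beta$, and $v_\mu(\alpha)=\top\leqslant v_\mu(\beta)$ would still handle item~(1). However, item~(2) for a conclusion $\beta=\beta_1\to\beta_2$ would then need an unwinding of the value $v_\mu(\beta)=\top$, and that value may equal $\top$ because $\mu(\beta)=\top$ rather than because $v_\mu(\beta_1)\leqslant v_\mu(\beta_2)$. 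Invoking Corollary~\ref{c1} removes this difficulty.
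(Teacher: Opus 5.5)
Your proposal is correct and is essentially the paper's own proof. Both argue by simultaneous induction on derivations from $\mathit{Ax}1$ and $\mathit{Ax}2$ in the $(\mathit{MP})$-free system $\lang{R}'$, handle $(\mathit{TR})$, $(\mathit{CM})$, $(\mathit{CV})$, $(\mathit{EA})$ and $(\mathit{AD})$ the same way, and get item~(1) for implications from item~(2) via the definition of $v_\mu$. The only difference is that you state explicitly the appeal to Corollary~\ref{c1}, which the paper uses tacitly, and your remark that an $(\mathit{MP})$ step would break item~(2) because $\mu$ may send an implication to $\top$ is accurate.
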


\begin{proof}
We prove \eqref{lem:for:t5:item:1} and \eqref{lem:for:t5:item:2} simultaneously by induction on the derivation of~$\varphi$ from the axioms in~$\lang{A}$ using the rules from the set~$\lang{R}'$.

{\bf Induction basis.} The set $\lang{A}$ contains the axioms $\mathit{Ax}1$ and $\mathit{Ax}2$; we consider each of them.
\begin{itemize}[leftmargin=4em, noitemsep, topsep=5pt, parsep=1pt]
\item[$\mathit{Ax}1$:]
Let $\varphi=\alpha\to\alpha\wedge\alpha$.
Since $v_\mu(\alpha)=v_\mu(\alpha)\wedge v_\mu(\alpha)$, we obtain that both \eqref{lem:for:t5:item:1} and \eqref{lem:for:t5:item:2} hold for $\varphi$.
\item[$\mathit{Ax}2$:]
Let $\varphi=\alpha\wedge\beta\to\beta$.
Clearly, $v_\mu(\alpha\wedge\beta)\wedge v_\mu(\beta)=v_\mu(\alpha\wedge\beta)$, therefore $v_\mu(\alpha\wedge\beta)\leqslant v_\mu(\beta)$, and hence \eqref{lem:for:t5:item:1} and \eqref{lem:for:t5:item:2} hold for $\varphi$.
\end{itemize}

{\bf Induction step.}
We show that every inference rule from $\lang{R}'$ preserves \eqref{lem:for:t5:item:1} and \eqref{lem:for:t5:item:2}.

\begin{itemize}[leftmargin=4em, noitemsep, topsep=5pt, parsep=1pt]

\item[$(\mathit{TR})$:]
Suppose $\varphi$ is obtained from $\alpha\to\gamma$ and $\gamma\to \beta$; in particular, $\varphi=\alpha\to\beta$.
By the induction hypothesis,
$v_\mu(\alpha)\leqslant v_\mu(\gamma)$ and $v_\mu(\gamma)\leqslant v_\mu(\beta)$, hence $v_\mu(\alpha)\leqslant v_\mu(\beta)$, which gives \eqref{lem:for:t5:item:2}; by the definition of $v_\mu$ we obtain~\eqref{lem:for:t5:item:1}.

\item[$(\mathit{CM})$:]
Suppose $\varphi$ is obtained from $\alpha_1\to\beta_1$ and $\alpha_2\to \beta_2$; in particular, $\varphi=\alpha\to\beta$ with $\alpha=\alpha_1\wedge\alpha_2$ and $\beta=\beta_1\wedge\beta_2$.
By the induction hypothesis, $v_\mu(\alpha_1)\leqslant v_\mu(\beta_1)$ and $v_\mu(\alpha_2)\leqslant v_\mu(\beta_2)$. Then
$$
v_\mu(\alpha\wedge\beta) = 
v_\mu(\alpha_1\wedge\beta_1)\wedge v_\mu(\alpha_2\wedge\beta_2) = 
v_\mu(\alpha_1)\wedge v_\mu(\alpha_2) = 
v_\mu(\alpha), 
$$
and hence $v_\mu(\alpha)\leqslant v_\mu(\beta)$, which yields \eqref{lem:for:t5:item:2}, whence \eqref{lem:for:t5:item:1} follows.

\item[$(\mathit{CV})$:]
Suppose $\varphi$ is obtained from $\gamma$ and $\gamma\wedge\alpha\to\beta$; in particular, $\varphi=\alpha\to\beta$.
By the induction hypothesis, $v_\mu(\gamma\wedge\alpha)\leqslant v_\mu(\beta)$ and $v_\mu(\gamma)=\top$, therefore $v_\mu(\gamma\wedge\alpha)=v_\mu(\gamma)\wedge v_\mu(\alpha)=v_\mu(\alpha)$,
and hence $v_\mu(\alpha)\leqslant v_\mu(\beta)$, which gives \eqref{lem:for:t5:item:2}, and then \eqref{lem:for:t5:item:1}.

\item[$(\mathit{EA})$:]
Suppose $\varphi$ is obtained from $\gamma\to\beta$; in particular, $\varphi=\alpha\to\beta$, where $\alpha = \gamma\wedge\delta$ for some~$\delta$. By the induction hypothesis, 
$v_\mu(\gamma)\leqslant v_\mu(\beta)$. But $v_\mu(\gamma\wedge\delta)\wedge v_\mu(\gamma)= v_\mu(\gamma\wedge\delta)$, hence $v_\mu(\gamma\wedge\delta)\leqslant v_\mu(\gamma)$, and then $v_\mu(\alpha)\leqslant v_\mu(\beta)$, which yields \eqref{lem:for:t5:item:2}, and consequently \eqref{lem:for:t5:item:1}.

\item[$(\mathit{AD})$:]
Suppose $\varphi=\gamma\wedge\psi$ and is obtained from $\gamma$ and $\psi$. Then \eqref{lem:for:t5:item:2} holds trivially because $\varphi$ is not an implication of formulas. By the induction hypothesis, $v_\mu(\gamma)=\top$ and $v_\mu(\psi)=\top$, and hence $v_\mu(\varphi)=v_\mu(\gamma)\wedge v_\mu(\psi)=\top$, which gives \eqref{lem:for:t5:item:1}.
\end{itemize}

Thus, \eqref{lem:for:t5:item:1} and \eqref{lem:for:t5:item:2} hold for every formula $\varphi\in\logic{W}$.
\end{proof}

\begin{imtheorem}[Soundness]     
\label{t5}
\textit{If $\varphi\in\logic{W}$, then $\mathcal{W}\models\varphi$.}
\end{imtheorem}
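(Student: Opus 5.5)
The soundness theorem is essentially an unwinding of the definitions of truth, with all the substantive work already done in Lemma~\ref{lem:for:t5}. The plan is to fix an arbitrary $\varphi\in\logic{W}$ and show $\bm{M}\models\varphi$ for every $W$-matrix $\bm{M}=\otuple{M,\wedge,\top}\in\mathcal{W}$. By the definitions of truth in a class, on a matrix and under a valuation, it suffices to fix an arbitrary valuation $v\colon\Pi\to M$ and an arbitrary function $\mu\colon\lang{L}\to M$, and then verify $v_\mu(\varphi)=\top$.

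This is exactly item~\eqref{lem:for:t5:item:1} of Lemma~\ref{lem:for:t5}, applied to the $\mu$-extension $v_\mu$ of $v$ in $\bm{M}$. It gives $\bm{M}\models^v_\mu\varphi$ for every $\mu$-extension, hence $\bm{M}\models^v\varphi$ for every valuation $v$, hence $\bm{M}\models\varphi$. Since $\bm{M}$ was arbitrary, we conclude $\mathcal{W}\models\varphi$.

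The only point worth checking is that Lemma~\ref{lem:for:t5} really covers all of $\logic{W}$. The lemma's induction runs over derivations from $\lang{A}$ by the rules $\lang{R}'$, that is, without $(\mathit{MP})$, so it literally establishes the claim for $\logic{W}'$. This is harmless, because Corollary~\ref{c1} gives $\logic{W}'=\logic{W}$; so I would invoke Corollary~\ref{c1} explicitly here.

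This is the one step where care is needed. A direct induction including $(\mathit{MP})$ would fail, since $v_\mu(\alpha)=\top$ and $v_\mu(\alpha\to\beta)=\top$ do not immediately give the needed order relation when the implication takes the value $\top$ via $\mu$. Corollary~\ref{c1} is precisely what removes this difficulty, and with it no further obstacle remains.
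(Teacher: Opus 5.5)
Your proposal is correct and follows the paper's own argument: soundness is read off from item~(1) of Lemma~\ref{lem:for:t5} by unwinding the quantifiers over $\mu$, $v$ and $\bm{M}$. Your explicit appeal to Corollary~\ref{c1} makes visible the identification $\logic{W}'=\logic{W}$, which the lemma's induction over $\lang{R}'$ uses without saying so.

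One aside is overstated. An induction that includes $(\mathit{MP})$ fails only for a fixed $\mu$; if you quantify over all $\mu$, you can reassign $\mu$ at the single formula $\gamma\to\delta$ to a non-$\top$ value (when one exists), and this recovers item~(2) for an $(\mathit{MP})$-conclusion $\gamma\to\delta$. Nothing in your proof depends on that remark.
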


\begin{proof}
Follows from Lemma~\ref{lem:for:t5}\,\eqref{lem:for:t5:item:1}.
\end{proof}

\begin{imtheorem}[Completeness]     
\label{t6a}
\textit{If $\mathcal{W}\models\varphi$, then $\varphi\in\logic{W}$.}
\end{imtheorem}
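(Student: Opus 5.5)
We will prove the contrapositive through the Lindenbaum-type matrix built in Section~\ref{sec:5}. Take $\bm{M}_W=\otuple{\lang{L}/W,\bwedge,\logic{W}}$. By the discussion following Lemma~\ref{lem:bwedge}, $\otuple{\lang{L}/W,\bwedge}$ is a semilattice whose greatest element is $\logic{W}$, so $\bm{M}_W$ is a $W$-matrix and belongs to $\mathcal{W}$. Take the canonical valuation $v(p)=[p]$. For the implication clause choose the function $\mu\colon\lang{L}\to\lang{L}/W$ given by $\mu(\psi)=[\psi]$.

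The key step is to show by induction on $\varphi$ that $v_\mu(\varphi)=[\varphi]$ for every formula $\varphi$.
\begin{itemize}
\item The atomic case holds by the choice of $v$.
\item The conjunction case is immediate from the definition~\eqref{eq:bwedge} of $\bwedge$ and the induction hypothesis.
\item For an implication $\varphi=\alpha\to\beta$, the induction hypothesis gives $v_\mu(\alpha)=[\alpha]$ and $v_\mu(\beta)=[\beta]$.
\begin{itemize}
\item If $[\alpha]\leqslant[\beta]$, then $v_\mu(\alpha\to\beta)=\top=\logic{W}$. By Corollary~\ref{c3}, $[\alpha\to\beta]=\logic{W}$ as well.
\item If $[\alpha]\not\leqslant[\beta]$, then by definition $v_\mu(\alpha\to\beta)=\mu(\alpha\to\beta)=[\alpha\to\beta]$.
\end{itemize}
\end{itemize}
So the implication case goes through precisely because $\mu$ is allowed to depend on the formula and not just on the classes of $\alpha$ and $\beta$. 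This is the feature that Proposition~\ref{prop:diff} shows is indispensable.

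Now suppose $\varphi\notin\logic{W}$. One point must be checked: the class $[\varphi]$ is different from the top element $\logic{W}$. Here $\logic{W}$ is identified with $[\psi]$ for any $\psi\in\logic{W}$, i.e.\ with the class of tautologies. If $\varphi\dashv\vdash\psi$ for some tautology $\psi$, then $\varphi\in W(\psi)=W(\varnothing)=\logic{W}$, which is a contradiction. Hence $v_\mu(\varphi)=[\varphi]\neq\top$, so $\bm{M}_W\not\models^v_\mu\varphi$. Then $\bm{M}_W\not\models^v\varphi$, hence $\bm{M}_W\not\models\varphi$ and $\mathcal{W}\not\models\varphi$, which is the contrapositive of the claim.

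The main obstacle I expect is conceptual bookkeeping rather than computation. One must confirm that the definition of truth quantifies over all $\mu$, so that exhibiting a single bad $\mu$ suffices for falsity. One must also make sure that the identification of the top element with $\logic{W}$ (a class, namely the set of tautologies) is consistent with Corollary~\ref{c3}. The latter follows because $\logic{W}$ is itself exactly the equivalence class of any tautology: formulas interderivable with a tautology are tautologies, and any two tautologies are interderivable by~(A2).
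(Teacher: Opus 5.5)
Your proposal is correct and matches the paper's proof: the same matrix $\bm{M}_W=\langle\mathcal{L}/W,\bwedge,\logic{W}\rangle$, the same valuation $p\mapsto[p]$ with $\mu(\psi)=[\psi]$, and the same induction showing $v_\mu(\psi)=[\psi]$, with Corollary~\ref{c3} covering the case $[\alpha]\leqslant[\beta]$. Your explicit check that $[\varphi]\neq\logic{W}$ when $\varphi\notin\logic{W}$ (via $W(\psi)=W(\varnothing)$ for a tautology $\psi$) fills in a step the paper only asserts.
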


\begin{proof}
Assume $\varphi\not\in\logic{W}$ and show that $\varphi$ is refuted on the $W$-algebra $\bm{M}_W = \otuple{\lang{L}/W,\bwedge,\logic{W}}$. 

Define a valuation $w$ on the $W$-algebra $\bm{M}_W$ and its $\mu$-extension $w_\mu$ by setting
$$
\begin{array}{lcll}
w(p) 
  & = 
  & [\hfill{p}\hfill], 
  & \quad\text{where $p\in\Pi$}; 
  \smallskip\\
\mu(\psi) 
  & = 
  & [\psi], 
  & \quad\text{where $\psi\in\lang{L}$.}
\end{array}
$$
Then for every formula $\psi\in\lang{L}$
$$
\begin{array}{lcl}
w_\mu(\psi) & = & [\psi]. 
\end{array}
$$
We prove this equality by induction on $\psi$. If $\psi=p$ for some variable $p\in\Pi$, then 
$$
\begin{array}{lclclclcl}
w_\mu(\psi) & = & w_\mu(p) & = & w(p) & = & [p] & = & [\psi]. 
\end{array}
$$
If $\psi=\psi'\wedge\psi''$, then by~\eqref{eq:bwedge} 
$$
\begin{array}{lclclclcl}
w_\mu(\psi) & = & w_\mu(\psi')\bwedge w_\mu(\psi'') & = & [\psi']\bwedge[\psi''] & = & [\psi'\wedge\psi''] & = & [\psi].
\end{array}
$$
Let $\psi=\psi'\wedge\psi''$. If $[\psi']\leqslant[\psi'']$, then $[\psi]=\logic{W}$ by Corollary~\ref{c3}, and hence $w_\mu=[\psi]$. If $[\psi']\not\leqslant[\psi'']$, then $w_\mu=[\psi]$ by the definition of the $\mu$-extension $w$.

Since $\varphi\not\in\logic{W}$, we have $w_\mu(\psi)=[\psi]\ne\logic{W}$. Then $\bm{M}_W\not\models^w_\mu\varphi$, and consequently $\mathcal{W}\not\models\varphi$.
\end{proof}

\section{Decidability of $\logic{W}$}
   \setcounter{equation}{0}
   \label{sec:7}

In this section we show that the set of tautologies of the minimal well-determined logic is decidable and present a corresponding algorithm whose running time is bounded by a polynomial in the length of the tested formula.
The algorithm is based on the following observations:
\begin{itemize}[noitemsep, topsep=5pt, parsep=1pt]
\item
if $\varphi$ is a variable, then $\varphi\not\in\logic{W}$ by Lemma~\ref{lem:V:consistent};
\item
if $\varphi=\alpha\wedge\beta$, then $\varphi\in\logic{W}$ if and only if $\alpha\in\logic{W}$ and $\beta\in\logic{W}$, since $\logic{W}$ is a well-structured set;
\item
if $\varphi=\alpha\to\beta$, then, by Lemma~\ref{l4}, $\varphi\in\logic{W}$ if and only if $K(\beta)\setminus\logic{W}$ is contained in $K(\alpha)\setminus\logic{W}$.
\end{itemize}
These observations allow us to construct a recursive procedure for checking whether a formula $\varphi$ belongs to~$\logic{W}$. Since it will be convenient to rely on more formal constructions in what follows, we proceed to their description.

We describe a recursive procedure $\mathtt{IsWTautology(\mbox{$x$})}$, which takes as input $x$ an arbitrary formula $\varphi$ and outputs $\mathtt{true}$ (if $\varphi\in\logic{W}$) or $\mathtt{false}$ (if $\varphi\notin\logic{W}$).
The procedure works as follows:
\begin{itemize}[noitemsep, topsep=5pt, parsep=1pt]
\item
if $\varphi\in\Pi$, then $\mathtt{IsWTautology(\mbox{$\varphi$})} = \mathtt{false}$;

\item
if $\varphi=\alpha\wedge\beta$, then $\mathtt{IsWTautology(\varphi)}$ is taken to be the conjunction of the values
$\mathtt{IsWTautology(\alpha)}$ and $\mathtt{IsWTautology(\beta)}$;

\item if $\varphi=\alpha\to\beta$, then for each $\gamma\in K(\beta)$ we check whether $\mathtt{IsWTautology(\gamma)} = \mathtt{true}$ or $\gamma\in K(\alpha)$; if this condition holds, then $\mathtt{IsWTautology(\varphi)} = \mathtt{true}$, otherwise $\mathtt{IsWTautology(\varphi)} = \mathtt{false}$.
\end{itemize}
A formal description of the procedure \texttt{IsWTautology}($\varphi$) is presented in pseudocode, see Algorithm~\ref{alg:W}.
\begin{algorithm}[h!t]
\caption{Procedure \texttt{IsWTautology}($\varphi$)}
\label{alg:W}
\begin{algorithmic}[1] 
\State \textbf{procedure} \texttt{IsWTautology($\varphi$)}
    \State \mbox{\Comment\textbf{Input:} formula $\varphi$.}\hfill {~}
    \State \mbox{\Comment\textbf{Output:} $\mathtt{true}$ or $\mathtt{false}$.}\hfill {~}

    \If{$\varphi\in\Pi$}
        \Return \texttt{false};
    \ElsIf{$\varphi=\alpha\wedge\beta$}
        \Return
               \texttt{IsWTautology($\alpha$)}\ \texttt{\&}
               \texttt{IsWTautology($\beta$)};
    \ElsIf{$\varphi=\alpha\to\beta$}
        \For {{\bf each} $\gamma$ \textbf{in} $K(\beta)$}
            \If {{\bf not} \texttt{IsWTautology($\gamma$)}}
               \If {{\bf not} $\gamma\in K(\alpha)$} \Return \texttt{false};
               \EndIf
            \EndIf
        \EndFor
    \EndIf
    \State \Return \texttt{true};
\end{algorithmic}
\end{algorithm}

Thus,
\begin{equation}
\begin{array}{lcl}
\varphi\in\logic{W} & \iff & \mathtt{IsWTautology(\varphi)} = \mathtt{true}.
\end{array}
\label{eq:W:IsWT}
\end{equation}

\begin{imtheorem}
\label{th:W:decidability}
\textit{Set\/ $\logic{W}$ is decidable.}
\end{imtheorem}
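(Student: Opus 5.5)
The plan is to show that the procedure \texttt{IsWTautology} of Algorithm~\ref{alg:W} always terminates and correctly decides membership in $\logic{W}$, i.e., to justify the equivalence~\eqref{eq:W:IsWT}. Decidability of $\logic{W}$ then follows at once. The argument is by induction on the length (number of symbol occurrences) of the input formula $\varphi$. We prove simultaneously that the call on $\varphi$ halts and that it returns $\mathtt{true}$ exactly when $\varphi\in\logic{W}$.

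\textbf{Termination.} Every recursive call is made on a proper subformula of $\varphi$. In the conjunction case the calls are on $\alpha$ and $\beta$. In the implication case they are on elements $\gamma\in K(\beta)$, which are subformulas of $\beta$ and hence strictly shorter than $\varphi$. The remaining steps are effective and finite: computing $K(\beta)$ and $K(\alpha)$ by descending through outermost conjunctions, and checking $\gamma\in K(\alpha)$ as syntactic identity. So well-founded induction on length gives termination.

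\textbf{Correctness.} Consider the three cases.
\begin{itemize}
\item If $\varphi\in\Pi$, then $\varphi\notin\logic{W}$ by Lemma~\ref{lem:V:consistent} together with Corollary~\ref{c1}, and the procedure returns $\mathtt{false}$.
\item If $\varphi=\alpha\wedge\beta$, we need $\alpha\wedge\beta\in\logic{W}$ iff $\alpha,\beta\in\logic{W}$. The direction ($\Leftarrow$) is the rule $(\mathit{AD})$. For ($\Rightarrow$), Lemma~\ref{l4a} with Corollary~\ref{c1} gives $K(\alpha\wedge\beta)\subseteq\logic{W}$. Then $\alpha$ and $\beta$ lie in $\logic{W}$ by applying $(\mathit{AD})$ to their elementary conjuncts; equivalently, one can use adequacy of $\logic{W}$. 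The induction hypothesis applied to $\alpha$ and $\beta$ finishes this case.
\item If $\varphi=\alpha\to\beta$, Lemma~\ref{l4} with Corollary~\ref{c1} says $\varphi\in\logic{W}$ iff $K(\beta)\setminus\logic{W}\subseteq K(\alpha)\setminus\logic{W}$. This condition is equivalent to: every $\gamma\in K(\beta)$ satisfies $\gamma\in\logic{W}$ or $\gamma\in K(\alpha)$. That is exactly what the loop checks, since $\gamma\in\logic{W}$ is evaluated via the inductive hypothesis on the shorter formula $\gamma$.
\end{itemize}

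The main point needing care is the implication case. There the recursion is not on $\alpha$ and $\beta$ themselves but on the elementary conjuncts $\gamma\in K(\beta)$. We must confirm that these are proper subformulas of $\varphi$, so the induction hypothesis applies. We must also confirm that the set-theoretic reformulation of~\eqref{eq:ast} is exact, not merely sufficient; this is a direct elementwise unfolding of the inclusion. Once this is checked, \eqref{eq:W:IsWT} holds and the procedure is a decision algorithm for $\logic{W}$.
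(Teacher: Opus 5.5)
Your proposal is correct and follows the paper's own route. The paper's proof simply cites the equivalence \eqref{eq:W:IsWT}, which rests on the same three case observations you use: Lemma~\ref{lem:V:consistent}, the well-structuredness of $\logic{W}$, and Lemma~\ref{l4} via Corollary~\ref{c1}. Your version adds what the paper leaves implicit: the induction on formula length, termination, and the check that the condition ``every $\gamma\in K(\beta)$ is in $\logic{W}$ or in $K(\alpha)$'' is exactly equivalent to $K(\beta)\setminus\logic{W}\subseteq K(\alpha)\setminus\logic{W}$.
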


\begin{proof}
Follows from~\eqref{eq:W:IsWT}.
\end{proof}

In fact, the presented algorithm gives not only the decidability of the set of tautologies of the minimal well-determined logic, but also allows us to obtain an upper bound for the complexity of the decision problem for this set.
To simplify the procedure for obtaining a complexity bound, we modify the algorithm described above: when computing the value of a subformula $\psi$ of the formula $\varphi$, we save this value as $\mathtt{WT[\mbox{$\psi$}]}$, in order to avoid repeated computations of the same thing. We assume that before the computation begins, $\mathtt{WT[\mbox{$\psi$}]}=\mathtt{NULL}$ for each subformula $\psi$ of the tested formula $\varphi$, i.e., the value $\mathtt{WT[\mbox{$\psi$}]}$ is undefined. The value $\mathtt{WT[\mbox{$\varphi$}]}$ is computed by the procedure $\mathtt{WTautology(\mbox{$x$})}$, which takes an arbitrary formula $\varphi$ as $x$; this procedure does not output anything (i.e., outputs $\mathtt{NULL}$), because the important thing is not the output of the procedure, but the value $\mathtt{WT[\mbox{$\varphi$}]}$. At the beginning of the computation, the procedure checks whether this value has already been computed, and only if it has not been computed, computes it recursively.

A formal description of the procedure $\mathtt{WTautology}(\varphi)$ is presented in pseudocode, see~Algorithm~\ref{alg:WR}.

\begin{algorithm}[h!t]
\caption{Procedure \texttt{WTautology}($\varphi$)}
\label{alg:WR}
\begin{algorithmic}[1]
\State \textbf{procedure} \texttt{WTautology($\varphi$)}
    \State \mbox{\Comment\textbf{Input:} formula $\varphi$.}\hfill {~}
    \State \mbox{\Comment\textbf{Output:} $\mathtt{NULL}$.}\hfill {~}
    \State \mbox{\Comment\textbf{Before computation:} \texttt{WT[\mbox{$\psi$}] $=$ NULL} for all $\psi$.}\hfill {~}

    \If{\texttt{WT[\mbox{$\varphi$}] $=$ NULL}}
        \If{$\varphi\in\Pi$}
            \texttt{WT[\mbox{$\varphi$}] $:=$ \texttt{false}};
        \ElsIf{$\varphi=\alpha\wedge\beta$}
            \State \texttt{WTautology($\alpha$)};
                   \Comment{computes \texttt{WT($\alpha$)} if necessary}
            \State \texttt{WTautology($\beta$)};
                   \Comment{computes \texttt{WT($\beta$)} if necessary}
            \State
            \texttt{WT[\mbox{$\varphi$}]} $:=$
                   \texttt{WT[$\alpha$]}\ \texttt{\&}
                   \texttt{WT[$\beta$]};
        \ElsIf{$\varphi=\alpha\to\beta$}
            \State \texttt{WT[\mbox{$\varphi$}] $:=$ \texttt{true}};
            \For {{\bf each} $\gamma$ \textbf{in} $K(\beta)$}
                \State \texttt{WTautology($\gamma$)};
                       \Comment{computes \texttt{WT($\gamma$)} if necessary}
                \If {{\bf not} \texttt{WT[$\gamma$]}}
                   \If {{\bf not} $\gamma\in K(\alpha)$}
                       \State \texttt{WT[\mbox{$\varphi$}] $:=$ \texttt{false}};
                       \State \textbf{break};
                   \EndIf
                \EndIf
            \EndFor
        \EndIf
    \EndIf
\State \Return \texttt{NULL};
\end{algorithmic}
\end{algorithm}

Let us give some explanations to this description.
If the condition in line~5 is not satisfied, i.e., the value $\mathtt{WT[\mbox{$\varphi$}]}$ has already been computed earlier, then no actions are performed. Next, the structure of the procedure $\mathtt{IsWTautology}(\varphi)$ is repeated, with the difference that instead of the value $\mathtt{IsWTautology}(\psi)$, we use the value $\mathtt{WT[\mbox{$\psi$}]}$, which is guaranteed to be defined after the call $\mathtt{WTautology}(\psi)$: these are lines 8, 9 and~14, where $\psi$ is taken to be $\alpha$, $\beta$ or $\gamma$, respectively. When computing $\mathtt{WT[\mbox{$\varphi$}]}$ in the case $\varphi=\alpha\to\beta$, we initially set $\mathtt{WT[\mbox{$\varphi$}]} = \mathtt{true}$, but then check a condition that may lead to changing this value; if the value changes, further checking is not required and it stops at line~18. Line~19 becomes merely a formality, simply ending the procedure.

\begin{imtheorem}
\label{th:W:p:decidability}
\textit{Set\/ $\logic{W}$ is decidable in polynomial time.}
\end{imtheorem}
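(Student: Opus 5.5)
We analyse Algorithm~\ref{alg:WR} on an input formula $\varphi$ of length $n$ and show two things: it computes $\mathtt{WT[\varphi]}$ correctly, and it runs in time polynomial in $n$.

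\textbf{Correctness.} Correctness is inherited from \eqref{eq:W:IsWT}. Algorithm~\ref{alg:WR} differs from Algorithm~\ref{alg:W} only by memoization. By induction on the order in which entries of $\mathtt{WT}$ are filled, I will show that whenever $\mathtt{WT[\psi]}\ne\mathtt{NULL}$, it equals $\mathtt{IsWTautology}(\psi)$. The only point to check is that every recursive call is made on a proper subformula. In the implication case, each $\gamma\in K(\beta)$ is a subformula of $\beta$, so the recursion is well-founded.

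\textbf{Complexity.} The key observation is that every call $\mathtt{WTautology}(\psi)$ has $\psi$ a subformula of $\varphi$. There are at most $n$ subformula occurrences, so at most $n$ distinct arguments, and the body past line~5 executes at most once per distinct subformula. Every other call returns after a single table lookup. I will then bound the cost of one full execution of the body for $\psi=\alpha\to\beta$:
\begin{itemize}
\item Computing $K(\beta)$ and $K(\alpha)$ means descending through the top-level $\wedge$-nodes. This takes $O(n)$ steps and yields at most $n$ elementary conjuncts each.
\item The loop performs at most $n$ iterations. Each iteration makes one call, which costs $O(1)$ if memoized, plus a membership test $\gamma\in K(\alpha)$.
\item The membership test compares $\gamma$ syntactically with up to $n$ formulas, each comparison costing $O(n)$, so $O(n^2)$ per test.
\end{itemize}
Thus one body execution costs $O(n^3)$ beyond its recursive calls. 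Summed over at most $n$ distinct subformulas, the total is $O(n^4)$. The number of calls that only perform a lookup is bounded by the total number of loop iterations and $\wedge$-children, which is $O(n^2)$. Table lookups themselves, whether via a pointer into the parse tree or a syntactic comparison costing $O(n)$ against at most $n$ stored keys, add at most a polynomial factor.

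\textbf{Main obstacle.} The main subtlety is the bookkeeping of the memo table $\mathtt{WT}$. It must be indexed so that identical subformulas appearing at different positions (for instance the same $\gamma$ occurring in both $K(\alpha)$ and $K(\beta)$) share one entry, or, alternatively, are treated as distinct occurrences. Either choice leaves the number of body executions at most $n$ and keeps every step polynomial. I would fix the representation as the parse tree with a table keyed by formula and lookup by syntactic equality. The stated theorem then follows from correctness together with the $O(n^4)$-type bound.
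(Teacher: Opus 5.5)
Your proposal is correct and takes essentially the same route as the paper: correctness is inherited from the equivalence between membership in $\logic{W}$ and the output of \texttt{IsWTautology}, memoization makes each subformula's value computed only once, and the loop over $K(\beta)$ has at most $n$ iterations. The only difference is granularity of bookkeeping. The paper treats the membership test $\gamma\in K(\alpha)$ and table accesses as single steps and states a quadratic bound, whereas you charge syntactic comparisons their actual cost and obtain $O(n^4)$; this is a more careful version of the same argument.
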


\begin{proof}
The value of each subformula of the formula $\varphi$ is computed once, but when computing the value of an implication $\alpha\to\beta$, it is necessary to iterate over all elements of the set $K(\beta)$, which is handled by line~13. The number of steps in this iteration is bounded above by the number of elements in $K(\beta)$, which, in turn, does not exceed the length of the formula~$\varphi$. Thus, the total number of steps in computing $\mathtt{WT[\mbox{$\varphi$}]}$ is bounded above by a quadratic function in the length of~$\varphi$.
\end{proof}

\section{Conclusion}
   \setcounter{equation}{0}
   \label{sec:8}

We have shown that the class of lower semilattices with a greatest element can be taken as a semantics for the minimal well-determined logic. In algebras of this class there is no operation corresponding to implication, and to evaluate formulas with implication we use an extension of the notion of a valuation.
This makes it possible to pose various questions related to semantics for this logic, such as the question of the finite model property 
in the class of such algebras. 

The proposed semantics can be regarded as a basis for investigating semantics of extensions of the minimal well-determined logic.
Since $(\mathit{MP})$ is admissible for the minimal well-determined logic, among its extensions a special interest is the logic \insMR{$W+(\mathit{MP})$} obtained from the minimal well-determined logic by requiring that not only the logic itself but all its theories be closed under~$(\mathit{MP})$. 
Note that the logic obtained in this way is not well-determined.

As has been shown, the set $\logic{W}$ of tautologies of the minimal well-determined logic is decidable; moreover, it is decidable in polynomial time. This polynomial-time decidability result is perhaps unexpected, yet it admits a simple explanation given by Lemma~\ref{l4}.
\insMR{Since $\logic{W}$ is also the set of tautologies of \insMR{$W+(\mathit{MP})$}, we obtain an example of polynomial-time decidable logic closed under~$(\mathit{MP})$.}

Note that quite often the sets of tautologies of propositional logics contain the set of classical tautologies as their natural fragment: for example, $\logic{Cl}$ is the modal-free fragment of modal logics~\cite{IG:MR:Feys1965,IG:MR:ChZ}, and also embeds into superintuitionistic logics via various translations~\cite{IG:MR:ChZ,IG:MR:FerreiraOliva:2010}. Nevertheless, this is not the case for $\logic{W}$, since $\logic{Cl}$ is $\ccls{coNP}$-complete~\cite{IG:MR:cook1971,IG:MR:levin1973}. No essential changes occur if we restrict ourselves to the conjunctive-implicative fragment of classical logic, i.e., the fragment with the same set of connectives as in the language of the minimal well-determined logic: it is easy to show that the problem of non-membership of formulas in the set $\logic{Cl}$ in the language with conjunction and implication remains $\ccls{NP}$-complete, i.e., the following statement holds.

\begin{improposition}
\textit{The conjunctive-implicative fragment of\/ $\logic{Cl}$ is\/ $\ccls{coNP}$-complete.}
\end{improposition}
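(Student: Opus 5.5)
Membership in $\ccls{coNP}$ and $\ccls{coNP}$-hardness will be proved separately; the hardness part is the only one requiring an idea.

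\emph{Upper bound.} A formula $\varphi$ in $\wedge,\to$ is \emph{not} in $\logic{Cl}$ iff some Boolean assignment to its variables makes it false. Such an assignment is a certificate of polynomial size, and it can be checked in linear time by evaluating $\varphi$ bottom-up. Hence non-membership is in $\ccls{NP}$, and the fragment is in $\ccls{coNP}$.

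\emph{Lower bound.} I would reduce the full classical tautology problem (in $\neg,\wedge,\vee,\to$), which is $\ccls{coNP}$-complete by Cook--Levin, to the fragment. Since $\bot$ is absent, I simulate it by a fresh variable $f$. Given $\psi$ over $p_1,\dots,p_n$, first translate it into a formula over $\wedge,\to$ and the constant $\bot$. Use $\neg\alpha=\alpha\to\bot$. For disjunction, use a polynomial-size encoding. The naive $\alpha\vee\beta=(\alpha\to\bot)\to\beta$ duplicates nothing, so the size stays linear. Then replace $\bot$ by $f$ to obtain $\psi^f$, and output
$$
\psi^\ast \;=\; \bigl(\psi^f\to f\bigr)\to f .
$$
I claim that $\psi\in\logic{Cl}$ iff $\psi^\ast\in\logic{Cl}$. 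Take any assignment. If it makes $f$ true, then $\psi^\ast$ is true, since its consequent $f$ is true. If it makes $f$ false, then $f$ behaves as $\bot$, so $\psi^f$ has the value of $\psi$, and $\psi^\ast$ has the value of $\neg\neg\psi$, which equals the value of $\psi$. Therefore $\psi^\ast$ is a tautology iff $\psi$ is true under every assignment with $f$ false, and since $f$ does not occur in $\psi$, this holds iff $\psi$ is a tautology.

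The main point to check carefully is this translation: that the substitution of $f$ for $\bot$ commutes with evaluation when $f$ is false, and that the translation of $\vee$ and $\neg$ keeps the size linear, so the reduction is polynomial.

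Putting the two bounds together, the set of non-tautologies of the fragment is $\ccls{NP}$-complete, and the fragment itself is $\ccls{coNP}$-complete.
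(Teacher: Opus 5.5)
Your proof is correct and uses the same idea as the paper: a fresh variable stands in for $\bot$, and the whole formula ends in ``$\to f$'', so valuations making that variable true are harmless. The paper works directly on 3-CNF formulas, turning each clause $l_1\vee l_2\vee l_3$ into $\lambda_1\wedge\lambda_2\wedge\lambda_3\to q_\varphi$ so that $\varphi^\ast$ simulates $\neg\varphi$ and reducing satisfiability to non-tautology; your compositional translation with the wrapper $(\psi^f\to f)\to f$ instead handles arbitrary formulas and reduces tautology to tautology directly.
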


\begin{proof}
It is known that the satisfiability problem for formulas in conjunctive normal form, where each conjunctive clause is a disjunction of three literals, is $\ccls{coNP}$-complete, which follows from the Cook--Levin theorem~\cite{IG:MR:cook1971,IG:MR:levin1973} and Tseytin's construction~\cite{IG:MR:tseitin1968}.
Let $\varphi$ be such a formula, i.e.,
$$
\begin{array}{lcl}
\varphi
  & =
  & \displaystyle
    \bigwedge\limits_{\mathclap{k=1}}^m(l^k_1\vee l^k_2\vee l^k_3),
\end{array}
$$
where each $l^k_i$ is a literal, i.e., either a variable or the negation of a variable.
Let $q_\varphi$ be a fixed variable not occurring in $\varphi$. Set
$$
\begin{array}{lcl}
\lambda^k_i
  & =
  & \left\{
    \begin{array}{ll}
      p      & \text{if $l^k_i=\neg p$;}\smallskip\\
      p\to q_\varphi & \text{if $l^k_i= p$,}\\
    \end{array}
    \right.
\end{array}
$$
and also
$$
\begin{array}{lcl}
\varphi^\ast
  & =
  & \displaystyle
    \bigwedge\limits_{\mathclap{k=1}}^m(\lambda^k_1\wedge \lambda^k_2\wedge \lambda^k_3 \to q_\varphi)\to q_\varphi.
\end{array}
$$
It is easy to see that the following equivalence holds:
$$
\begin{array}{lcl}
\text{$\varphi$ is satisfiable}
  & \iff
  & \text{$\varphi^\ast\not\in\logic{Cl}$.}
\end{array}
$$
Indeed, it suffices to note that substituting any unsatisfiable formula for $q_\varphi$ in $\varphi^\ast$ yields a formula equivalent to $\neg\varphi$ in classical logic, and that $\varphi^\ast$ can be refuted only by valuations that falsify~$q_\varphi$.

Since $\varphi^\ast$ is constructed from $\varphi$ in polynomial time, we obtain that the conjunctive-implicative fragment of the set $\logic{Cl}$ is $\ccls{coNP}$-hard. The membership of this fragment in the class $\ccls{coNP}$ follows from the membership of the set of classical tautologies in the full language in this class.
\end{proof}

Assuming that $\ccls{P}\ne\ccls{NP}$ (see~\cite{IG:MR:Jaffe:2000}), this observation implies that even the conjunctive-implicative fragment of the set of tautologies of classical logic cannot be embedded into $\logic{W}$ by any polynomial-time algorithm. Nevertheless, such embeddings exist for finite-variable fragments of the set $\logic{Cl}$, since each such fragment is decidable in polynomial time.

Note that, unlike classical logic, the sets of tautologies of many non-classical logics are polynomial-time embeddable into their finite-variable fragments: this is true for monomodal systems---both normal~\cite{IG:MR:spaan1993, IG:MR:halpern1995, IG:MR:2002:LI:1, IG:MR:chagrov2003, IG:MR:svejdar2003, IG:MR:agadzhanian2022, IG:MR:rybakov2025} and non-normal~\cite{IG:kudinov2025, IG:MR:2025:Nsk:2}---as well as for polymodal~\cite{IG:MR:2007:Tver, IG:MR:pahomov2014, IG:MR:2018:IGPL, IG:MR:2022:JLC, IG:MR:2022:TCS}, superintuitionistic~\cite{IG:MR:2004:LI, IG:MR:2008:JANCL}, and some other logics~\cite{IG:MR:2003:LI, IG:MR:2025:JLC:embedding, IG:MR:2024:Nsk:1, IG:MR:onoprienko-rybakov-submitted}. It would be interesting to understand how to construct such embeddings in the case of~$\logic{W}$. Note that the polynomial-time decidability of the set $\logic{W}$ implies that $\logic{W}$ is polynomial-time reducible to any of its finite-variable fragments.

\begin{improposition}
\textit{The set $\logic{W}$ is polynomial-time reducible to its one-variable fragment.}
\end{improposition}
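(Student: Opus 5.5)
The reduction is trivial once we have Theorem~\ref{th:W:p:decidability}: a polynomial-time decision procedure lets us map every formula to one of two fixed one-variable formulas. Fix a variable $p$ and two one-variable formulas: a tautology $\tau = p\to p$, which lies in $\logic{W}$ (derivable via Lemma~\ref{1:IG}, or check it with Algorithm~\ref{alg:W}: $K(p)=\{p\}\subseteq K(p)$), and a non-tautology $p$, which is not in $\logic{W}$ by Lemma~\ref{lem:V:consistent}.

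Define $f(\varphi)=p\to p$ if $\mathtt{IsWTautology}(\varphi)=\mathtt{true}$, and $f(\varphi)=p$ otherwise. By \eqref{eq:W:IsWT} we have $\varphi\in\logic{W}$ iff $f(\varphi)\in\logic{W}$. Both values of $f$ lie in the one-variable fragment, so $\varphi\in\logic{W}$ iff $f(\varphi)$ belongs to the one-variable fragment of $\logic{W}$.

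By Theorem~\ref{th:W:p:decidability}, $f$ is computable in time quadratic in $|\varphi|$ (run Algorithm~\ref{alg:WR}, then output a constant-size formula). So $f$ is a polynomial-time many-one reduction.

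There is no real obstacle. The only points to verify are that the two chosen one-variable formulas really lie in and out of $\logic{W}$, and both are immediate from earlier results. One could remark that this argument works for any polynomial-time decidable set with a nontrivial target fragment, so the statement carries no structural content about $\logic{W}$ beyond Theorem~\ref{th:W:p:decidability}. A more informative substitution-based embedding, in the style of the cited results for modal logics, is not needed for the statement as posed.
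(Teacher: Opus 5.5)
Your proposal is correct and is essentially the paper's own proof: the paper defines the same map $f(\varphi)=p\to p$ if $\varphi\in\logic{W}$ and $f(\varphi)=p$ otherwise, and cites Theorem~\ref{th:W:p:decidability} for its polynomial-time computability. Your explicit checks that $p\to p\in\logic{W}$ (Lemma~\ref{1:IG}) and $p\notin\logic{W}$ (Lemma~\ref{lem:V:consistent}) fill in the small points the paper leaves implicit.
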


\begin{proof}
Define a function $f$ that maps each conjunctive-implicative formula to a formula in one variable by setting
$$
\begin{array}{lcl}
f(\varphi)
  & =
  & \left\{
    \begin{array}{ll}
    p\to p & \text{if $\varphi\in \logic{W}$;} \\
    p      & \text{if $\varphi\not\in \logic{W}$.}
    \end{array}
    \right.
\end{array}
$$
Then $f$ is a polynomial-time reduction from $\logic{W}$ to its one-variable fragment by Theorem~\ref{th:W:p:decidability}.
\end{proof}

Nevertheless, it would be interesting to investigate whether such reductions based on structure-preserving translations exist.  Indeed, the existing literature presents a mixed picture. On the one hand, the works cited above construct explicit polynomial-time embeddings of numerous logics into their finite-variable fragments. On the other hand, for a broad family of logics---including $\logic{S4}$, $\logic{K4}$, $\logic{GL}$, $\logic{Grz}$, $\logic{S4.3}$, $\logic{K4.3}$, $\logic{GL.3}$, $\logic{Grz.3}$, $\logic{E}$, $\logic{EM}$, $\logic{EN}$, $\logic{EMN}$, and many others---results about their complexity imply that polynomial-time embeddings into the one-variable fragments (even into the variable-free fragments for some) must exist, yet these proofs do not yield explicit constructions, and in particular, no structure-preserving translation is provided. How to construct such embeddings explicitly (and whether it is possible) remains unclear to the authors. Thus, the inquiry concerning $\logic{W}$ fits naturally into this broader picture.

\subsection*{Acknowledgements}
\addcontentsline{toc}{section}{Acknowledgements}

During the preparation of this work, the language model DeepSeek (DeepSeek-R1) was used to check the correctness of proofs, to formalize the decision procedure for the set $\logic{W}$, and to check English.

\subsection*{Funding}
\addcontentsline{toc}{section}{Funding}

The research leading to these results has received funding from the Basic Research Program at the National Research University Higher School of Economics for the first author.
The research is supported by MSHE RF GZ project for the second author.

\appendix
\section{Some technical proofs}
   \setcounter{equation}{0}
   \label{sec:A}
   \label{sec:Pr}

\insMR{
We present proofs of some facts to which we referred in the main text. For convenience, we introduce the following notation:
$$
\begin{array}{lcl}
\varphi_1 & = & p\to p; \\
\varphi_2 & = & p\wedge q\to p; \\
\varphi_3 & = & p\wedge q\to q\wedge p; \\
\varphi_4 & = & (p\wedge q)\wedge r\to p\wedge (q\wedge r); \\
\varphi_5 & = & p\wedge (q\wedge r)\to (p\wedge q)\wedge r.
\end{array}
$$
Let also $\mathcal{B}=\set{\varphi_1, \varphi_2, \varphi_3, \varphi_4, \varphi_5}$.

\begin{imlemma} \label{1:IG}
The formulas of\/ $\mathcal{B}$ are derivable from $\mathcal{A}$ using $(\mathit{TR})$, $(\mathit{CM})$, and $(\mathit{EA})$.
\end{imlemma}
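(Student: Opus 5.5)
I will give explicit derivations of each formula of $\mathcal{B}$ from the axioms $\mathit{Ax}1=p\to p\wedge p$ and $\mathit{Ax}2=p\wedge q\to q$, using only $(\mathit{TR})$, $(\mathit{CM})$ and $(\mathit{EA})$. Substitution instances of axioms are allowed, since axioms are schemata. The derivations are ordered so that later ones reuse earlier ones.

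\emph{$\varphi_1$.} Take $\mathit{Ax}1$ in the form $p\to p\wedge p$ and the instance $p\wedge p\to p$ of $\mathit{Ax}2$. Then $(\mathit{TR})$ gives $p\to p$.

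\emph{$\varphi_2$.} Apply $(\mathit{EA})$ to $\varphi_1$ with $r:=q$; this yields $p\wedge q\to p$ directly.

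\emph{$\varphi_3$.} Start from the instance $p\wedge q\to (p\wedge q)\wedge(p\wedge q)$ of $\mathit{Ax}1$. Applying $(\mathit{CM})$ to $\mathit{Ax}2$ (namely $p\wedge q\to q$) and $\varphi_2$ (namely $p\wedge q\to p$) gives $(p\wedge q)\wedge(p\wedge q)\to q\wedge p$. One $(\mathit{TR})$ step then yields $p\wedge q\to q\wedge p$.

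\emph{$\varphi_4$ and $\varphi_5$.} I reuse the same duplication trick, with the target conjunction split by $(\mathit{CM})$ and projections built by $(\mathit{TR})$ from $\varphi_2$ and $\mathit{Ax}2$. For $\varphi_4$ set $\sigma=(p\wedge q)\wedge r$. The projections are:
\begin{itemize}
\item $\sigma\to p$, obtained from $\sigma\to p\wedge q$ (an instance of $\varphi_2$) and $p\wedge q\to p$ by $(\mathit{TR})$;
\item $\sigma\to q$, obtained from $\sigma\to p\wedge q$ and $p\wedge q\to q$ by $(\mathit{TR})$;
\item $\sigma\to r$, an instance of $\mathit{Ax}2$.
\end{itemize}
Next, $(\mathit{CM})$ applied to $\sigma\to q$ and $\sigma\to r$ gives $\sigma\wedge\sigma\to q\wedge r$. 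Combining this with $\mathit{Ax}1$ for $\sigma$ by $(\mathit{TR})$ gives $\sigma\to q\wedge r$. Then $(\mathit{CM})$ with $\sigma\to p$ gives $\sigma\wedge\sigma\to p\wedge(q\wedge r)$, and a final $(\mathit{TR})$ with $\sigma\to\sigma\wedge\sigma$ produces $\varphi_4$. The derivation of $\varphi_5$ is symmetric: set $\tau=p\wedge(q\wedge r)$, obtain the projections $\tau\to p$, $\tau\to q$, $\tau\to r$ by composing $\varphi_2$ and $\mathit{Ax}2$, and then assemble $(p\wedge q)\wedge r$ in the same way.

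The only real obstacle is the absence of a direct rule for pairing two implications with a \emph{common} antecedent. The routine $\gamma\to\gamma\wedge\gamma$ ($\mathit{Ax}1$), then $(\mathit{CM})$, then $(\mathit{TR})$ supplies such a rule. Once this derived pairing step is in hand, everything else is bookkeeping of substitution instances.
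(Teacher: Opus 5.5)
Your proposal is correct and follows the paper's proof essentially step for step. It uses the same $\mathit{Ax}1$/$\mathit{Ax}2$-plus-$(\mathit{TR})$ derivation of $\varphi_1$, $(\mathit{EA})$ for $\varphi_2$, and the duplication-then-$(\mathit{CM})$-then-$(\mathit{TR})$ trick for $\varphi_3$; for $\varphi_4$ it builds the same projections $\sigma\to p$, $\sigma\to q$, $\sigma\to r$ and assembles them via $\sigma\to\sigma\wedge\sigma$ (the paper's $\omega_1$–$\omega_8$), with $\varphi_5$ treated symmetrically; your ordering of the $(\mathit{CM})$ premises ($\sigma\to q$ before $\sigma\to r$) is even slightly more careful than the paper's, which combines $\omega_1$ and $\omega_3$ in the other order.
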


\begin{proof}
We show for each formula in $\mathcal{B}$, how it is derived from $\mathcal{A}$ using $(\mathit{TR})$, $(\mathit{CM})$, and $(\mathit{EA})$.
\begin{itemize}[leftmargin=4em, noitemsep, topsep=5pt, parsep=1pt]
\item[$\varphi_1$:]  
The formulas $p\to p\wedge p$ and $p\wedge p\to p$ are obtained by $\mathit{Ax}1$ and $\mathit{Ax}2$, respectively, and from these formulas by $(\mathit{TR})$ we obtain~$\varphi_1$.

\item[$\varphi_2$:]  
From $\varphi_1$ by $(\mathit{EA})$ we obtain $\varphi_2$.

\item[$\varphi_3$:]  
Applying $(\mathit{CM})$ to the formulas $p\wedge q\to q$ (obtained by $\mathit{Ax}2$) and $\varphi_2$, we obtain $(p\wedge q)\wedge(p\wedge q)\to q\wedge p$. From the latter formula and the formula $p\wedge q\to (p\wedge q) \wedge (p\wedge q)$ (obtained by $\mathit{Ax}1$) by $(\mathit{TR})$ we obtain $\varphi_3$. 

\item[$\varphi_4$:]  
Let $\omega = (p\wedge q)\wedge r$. Set
$$
\begin{array}{lcl}
\begin{array}{lcl}
\omega_1 & = & \omega\to r; \\
\omega_2 & = & \omega\to(p\wedge q); \\
\omega_3 & = & \omega\to q; \\
\omega_4 & = & \omega\to p; \\
\end{array}
& {}\quad{} &
\begin{array}{lcl}
\omega_5 & = & \omega\wedge\omega\to q\wedge r; \\
\omega_6 & = & \omega\to q\wedge r; \\
\omega_7 & = & \omega\wedge\omega\to p\wedge (q\wedge r); \\
\omega_8 & = & \omega\to\omega\wedge\omega.
\end{array}
\end{array}
$$
The formula $\omega_8$ is obtained by $\mathit{Ax}1$, the formula $\omega_1$ is obtained by $\mathit{Ax}2$, the formula $\omega_2$ is a substitution instance of $\varphi_2$, the formulas $\omega_3$ and $\omega_4$ are obtained from $\omega_2$, $\varphi_2$ and $p\wedge q\to q$ using $(\mathit{TR})$. From $\omega_1$ and $\omega_3$ by $(\mathit{CM})$ we obtain~$\omega_5$. Applying $(\mathit{TR})$ to $\omega_5$ and $\omega_8$ we obtain $\omega_6$. From $\omega_4$ and $\omega_6$ by $(\mathit{CM})$ we obtain~$\omega_7$. From $\omega_7$ and $\omega_8$ by $(\mathit{TR})$ we obtain~$\varphi_4$. 

\item[$\varphi_5$:]  
The formula $\varphi_5$ is derived similarly.
\end{itemize}
Thus we have obtained the required.
\end{proof}

\begin{imlemma} \label{2:IG}
Any deductive set of formulas satisfies $(C1)$, $(C2)$, and $(C3)$.
\end{imlemma}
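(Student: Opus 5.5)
The plan is to fix a deductive set $L$ together with a well-determined logic $C$ such that $C(\varnothing)=L$, and to use (WD), or equivalently (B1) and (B2), together with the axioms (A1)--(A4) for $C$. Each of (C1), (C2), (C3) is then verified in turn by translating membership of an implication in $L$ into a consequence statement and back.

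\textbf{(C1).} Let $\varphi\in L=C(\varnothing)$ and let $\varepsilon$ be a substitution. By structurality (A4), $\varepsilon C(\varnothing)\subseteq C(\varepsilon\varnothing)=C(\varnothing)$, so $\varepsilon\varphi\in L$. This is where I use that a well-determined logic is a logic in the paper's sense, with structurality assumed. If structurality were not available, I would instead work with $\vec L$ and the identity $C=\vec L$ from \cite[Theorem~9]{IG:MR:3}.

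\textbf{(C2).} For $p\to p\wedge p$, (B2) gives $C(p\wedge p)=C(p,p)=C(p)$. By (A1), $p\wedge p\in C(p)$, and (B1) then yields $p\to p\wedge p\in L$. For $p\wedge q\to q$, we have $q\in C(p,q)=C(p\wedge q)$, and (B1) again gives the claim.

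\textbf{(C3).} Each rule is checked by passing from formulas to consequences.
\begin{itemize}
\item (MP): from $p\in C(\varnothing)$ and $q\in C(p)$, monotonicity and idempotence give $q\in C(C(\varnothing))=C(\varnothing)$.
\item (AD): from $p,q\in C(\varnothing)$ we get $p\wedge q\in C(p,q)\subseteq C(C(\varnothing))=C(\varnothing)$, using (B2).
\item (TR): from $q\in C(p)$ and $r\in C(q)$ we get $r\in C(q)\subseteq C(C(p))=C(p)$, and (B1) finishes.
\item (EA): from $q\in C(p)$ and $C(p)\subseteq C(p,r)=C(p\wedge r)$ we get $q\in C(p\wedge r)$.
\item (CM): we have $q_1,q_2\in C(p_1,p_2)=C(p_1\wedge p_2)$. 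The key auxiliary fact is $q_1\wedge q_2\in C(q_1,q_2)\subseteq C(C(p_1\wedge p_2))$, obtained from (A1) and (B2) applied to $\{q_1,q_2\}$.
\item (CV): from $p\in C(\varnothing)$ and $r\in C(p\wedge q)=C(p,q)$ we get $C(p,q)\subseteq C(C(\varnothing)\cup\{q\})=C(q)$, so $r\in C(q)$.
\end{itemize}

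I expect the main obstacle to be bookkeeping rather than anything substantial. Two points need care. First, idempotence must be used correctly when adding already-derived formulas, as in $C(\Gamma\cup C(\Delta))\subseteq C(\Gamma\cup\Delta)$. Second, the rules must be read as closure under all substitution instances of their schemata, which is harmless because the arguments above are uniform in the formulas.
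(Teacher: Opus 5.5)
Your proposal is correct and follows essentially the same route as the paper: (C1) comes from structurality of the (standard) consequence $C$, and (C2), (C3) come from converting implications in $L$ into consequence statements via (B1), (B2) and (A1)--(A3). You in fact carry out all the rule checks the paper leaves to the reader, and your derivation of $p\to p\wedge p$ through $p\wedge p\in C(p\wedge p)=C(p)$ is more accurate than the paper's wording. The only thing to drop is the fallback remark about $\vec L$: the identity $C=\vec L$ says nothing about invariance of $L$, because $\vec L$ is itself structural only once $L$ is known to be invariant.
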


\begin{proof}
Let $L$ be a deductive set. Then there exists a well-determined logic $C$ such that $C(\varnothing)=L$. Using $C$, we show that $L$ satisfies $(C1)$--$(C3)$.
\begin{itemize}[leftmargin=4em, noitemsep, topsep=5pt, parsep=1pt]
\item[$(C1)$:]
As the set of tautologies of a standard consequence, $L$ is invariant \cite[p.\,11, 2.7]{IG:MR:1}, which yields~$(C1)$.
\item[$(C2)$:]
By $(A1)$--$(A3)$ and $(B2)$ we obtain $p\in C(p)=C(p\wedge p)$ and $q\in C(p, q)=C(p\wedge q)$, which by $(B1)$ gives~$(C2)$.
\item[$(C3)$:]
Direct verification of the closure of $L$ under the rules in $\lang{R}$.
For example, we prove that $L$ is closed under $(\mathit{TR})$. Suppose $\alpha\to \beta\in L$ and $\beta\to \gamma\in L$. Then by $(B1)$ we obtain $\beta\in C(\alpha)$ and $\gamma\in C(\beta)$. By the properties of the consequence operation $C$, we have $C(\beta)\subseteq C(\alpha)$, hence $\gamma\in C(\alpha)$, and therefore $\alpha\to\gamma\in L$. The verification of the remaining rules is similar and is left to the reader.
\end{itemize}
Thus, $(C1)$--$(C3)$ hold for $L$.
\end{proof}

\begin{imlemma} \label{3:IG}
Any set of formulas satisfying $(C1)$, $(C2)$, and $(C3)$ is deductive.
\end{imlemma}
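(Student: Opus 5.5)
The plan is to show that for a set $L$ satisfying (C1)--(C3), the operation $\vec L$ given by \eqref{eq:8} is a standard consequence operation with $\vec L(\varnothing)=L$ that satisfies \eqref{eq:WD}. Then $L$ is deductive by definition. First we collect the derived facts we need. By Lemma~\ref{1:IG}, the formulas of $\mathcal{B}$ lie in $L$, since $L$ contains $\mathcal{A}$ by (C2) and is closed under $(\mathit{TR})$, $(\mathit{CM})$, $(\mathit{EA})$ by (C3). By (C1), all their substitution instances lie in $L$ as well. Combining these with $(\mathit{TR})$ and $(\mathit{CM})$ shows that any two conjunctions in $\Gamma^\ast_{\hspace{-.16ex}\wedge}$ are $L$-equivalent. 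This gives the analogue of Lemma~\ref{l1} for $L$ and hence the simplified form \eqref{eq:8a}: $\alpha\in\vec L(X)$ iff ${\bigwedge}\Gamma\to\alpha\in L$ for some nonempty finite $\Gamma\subseteq X\cup L$. We also need $(\mathit{PR})$ for $L$. From $q\in L$ and $q\wedge p\to q\in L$ (an instance of $\mathit{Ax}2$ modulo commutativity), $(\mathit{CV})$ yields $p\to q\in L$.

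Next we check $\vec L(\varnothing)=L$. If $\alpha\in L$, take $\Gamma=\{\alpha\}$ and use $\varphi_1$. Conversely, if ${\bigwedge}\Gamma\to\alpha\in L$ with $\Gamma\subseteq L$, then ${\bigwedge}\Gamma\in L$ by $(\mathit{AD})$, and $(\mathit{MP})$ gives $\alpha\in L$. Then we verify (A1)--(A5). Extensiveness follows from $\varphi_1$ and monotonicity is immediate. Finitariness is built into the definition. Structurality follows from (C1), because substitution commutes with $\bigwedge$.

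The main obstacle is idempotence (A3). Suppose $\alpha\in\vec L(\vec L(X))$, witnessed by ${\bigwedge}\{\beta_1,\dots,\beta_n\}\to\alpha\in L$, where each $\beta_i\in\vec L(X)$ is witnessed by ${\bigwedge}\Gamma_i\to\beta_i\in L$ with $\Gamma_i\subseteq X\cup L$. Put $\Gamma=\bigcup_i\Gamma_i$. For each $i$ we get ${\bigwedge}\Gamma\to{\bigwedge}\Gamma_i$ from $\varphi_2$, reordering and $(\mathit{TR})$, and then ${\bigwedge}\Gamma\to\beta_i$ by $(\mathit{TR})$. Iterating $\mathit{Ax}1$ to duplicate ${\bigwedge}\Gamma$ and combining with $(\mathit{CM})$ yields ${\bigwedge}\Gamma\to\beta_1\wedge\dots\wedge\beta_n$. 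A final $(\mathit{TR})$ gives ${\bigwedge}\Gamma\to\alpha\in L$. The same pattern handles members of $X\cup L$ that occur directly among the $\beta_i$.

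Finally we establish \eqref{eq:WD} in the form (B1)+(B2), as in the equivalence shown in Section~\ref{sec:3}. For (B1), first suppose $\alpha\in\vec L(\beta)$, witnessed by ${\bigwedge}\Gamma\to\alpha$ with $\Gamma\subseteq\{\beta\}\cup L$. Write $\Gamma=\Gamma_0\cup\Gamma_1$, where $\Gamma_1\subseteq L$. If $\beta\in\Gamma$, we use ${\bigwedge}\Gamma_1\in L$ (by $(\mathit{AD})$) together with $(\mathit{CV})$, after reordering, to strip $\Gamma_1$ and obtain $\beta\to\alpha\in L$. If $\beta\notin\Gamma$, then $\alpha\in L$ by $(\mathit{MP})$, and $(\mathit{PR})$ gives $\beta\to\alpha\in L$. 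The converse direction is immediate. For (B2), we prove $\vec L(\alpha\wedge\beta)=\vec L(\alpha,\beta)$. The inclusion from left to right uses the equivalence of conjunction orderings from the first step. The inclusion from right to left follows because both $\alpha$ and $\beta$ lie in $\vec L(\alpha\wedge\beta)$, by $\varphi_2$ and $\mathit{Ax}2$, combined with idempotence.
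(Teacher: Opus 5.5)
Your proposal is correct, and its overall structure matches the paper's proof: Lemma~\ref{1:IG} gives the analogue of Lemma~\ref{l1} and hence \eqref{eq:8a}; then (A1)--(A5), with (A3) via $(\mathit{CM})$ and $(\mathit{TR})$; then $\vec L(\varnothing)=L$ via $(\mathit{AD})$ and $(\mathit{MP})$.

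The real difference is the last step.
\begin{itemize}
\item The paper verifies \eqref{eq:WD} directly. Given a witness ${\bigwedge}\Delta\to\alpha\in L$ with $\Delta\subseteq\Gamma\cup L$, it adds the missing members of $\Gamma$ by $(\mathit{EA})$ and strips the members of $L\setminus\Gamma$ by $(\mathit{CV})$. This also covers the case $\Delta\subseteq L$ without a separate argument.
\item You instead prove (B1) and (B2) for $\vec L$ and invoke the equivalence from Section~\ref{sec:3}. This is legitimate because that equivalence uses only (A1)--(A3), which you have already established. Its step $C(\Gamma)=C(\beta)$ for $\beta\in\Gamma^\ast_{\hspace{-.16ex}\wedge}$ is an induction on $\beta$ using $C(X\cup Y)=C(C(X)\cup C(Y))$.
\end{itemize}

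Your route makes (B2) nearly immediate, since the inclusion $\vec L(\alpha,\beta)\subseteq\vec L(\alpha\wedge\beta)$ is just idempotence. The cost is a case split in (B1) that needs $(\mathit{PR})$. You correctly derive $(\mathit{PR})$ from $(\mathit{CV})$ and an instance of $\varphi_2$. This matters: Lemma~\ref{l2} could not be used here, because its proof presupposes deductiveness and would be circular.

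Your (A3) argument also differs slightly. You first weaken each ${\bigwedge}\Gamma_i$ to ${\bigwedge}\bigcup_i\Gamma_i$ and then contract with $\mathit{Ax}1$ and $(\mathit{CM})$. The paper chains $(\mathit{CM})$ over the ${\bigwedge}\Gamma_i$ and lets \eqref{eq:8} together with Lemma~\ref{l1} absorb the repeated conjuncts. Both work.
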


\begin{proof}
Let $L$ be a set of formulas satisfying $(C1)$--$(C3)$. We show that $\vec{L}$ is the required consequence.

First, we prove $(A1)$--$(A5)$ for $L$. Observe that, due to $(C1)$--$(C3)$, the conditions of Lemma~\ref{1:IG} hold for~$L$. From Lemma~\ref{1:IG} we obtain Lemma~\ref{l1} (see the proof of Lemma~\ref{l1}), which yields \eqref{eq:8a} for~$\vec L$.
\begin{itemize}[leftmargin=4em, noitemsep, topsep=5pt, parsep=1pt]
\item[$(A1)$:]  
From Lemma~\ref{1:IG} it follows that $\alpha\to \alpha\in L$ for any formula $\alpha$, and hence $\alpha\in{\vec L}(\alpha)$ by \eqref{eq:8a}, which gives~$(A1)$. 

\item[$(A2)$:]  
Immediately follows from \eqref{eq:8a}.

\item[$(A3)$:]  
The inclusion $\vec L(X)\subseteq\vec L(\vec L(X))$ follows from $(A1)$, and we only need to prove the reverse inclusion.
Suppose $\alpha\in \vec L(\vec L(X))$. Then, by \eqref{eq:8a}, there exists
$\Gamma\in \mathcal{P}_{\mathit{fin}}^+(L\cup\vec L(X))$ such that ${\bigwedge}\Gamma\to\alpha \in L$. 
Note that $\vec L(L)=\vec L(\varnothing)$ by \eqref{eq:8a}, and then $L\subseteq\vec L(\varnothing)$ by $(A1)$, whence we obtain $L\cup\vec L(X)=\vec L(X)$.
Thus $\Gamma\subseteq\vec L(X)$. 
Let $\Gamma=\{\alpha_1, \ldots, \alpha_n\}$. 
Then, by \eqref{eq:8a}, for each $i\in\set{1,\ldots,n}$ there exists $\Gamma_i\in \mathcal{P}_{\mathit{fin}}^+(L\cup X)$ such that 
${\bigwedge}\Gamma_{i}\to\alpha_i\in L$.
Let $\beta_i={\bigwedge}\Gamma_{i}$, where $i\in\set{1,\ldots,n}$. 
Set
$$
\begin{array}{lcl}
\begin{array}{lcl}
\alpha^1 & = & \alpha_1; \smallskip\\
\alpha^{k+1} & = & \alpha^k\wedge\alpha_{k+1};
\end{array}
& {}\quad{} &
\begin{array}{lcl}
\beta^1 & = & \beta_1; \smallskip\\
\beta^{k+1} & = & \beta^k\wedge\beta_{k+1},
\end{array}
\end{array}
$$
where $k\in\set{1,\ldots,n-1}$.
For each $k\in\set{1,\ldots,n-1}$, from the formulas $\beta^k\to\alpha^k$ and $\beta_{k+1}\to\alpha_{k+1}$ by $(\mathit{CM})$ we obtain $\beta^{k+1}\to\alpha^{k+1}$.
Applying $(\mathit{TR})$ to $\beta^n\to\alpha^n$ and $\alpha^n\to\alpha$, we obtain $\beta^n\to\alpha$. 
Since $\bigcup\set{\Gamma_i : i\in\set{1,\ldots,n}}\subseteq X\cup L$, by \eqref{eq:8a} we obtain $\alpha\in\vec L(X)$. Hence,
$\vec L(\vec L(X))\subseteq\vec L(X)$, and therefore $\vec L(X)=\vec L(\vec L(X))$.

\item[$(A4)$:]  
Follows from \eqref{eq:8a} and $(C1)$.

\item[$(A5)$:]  
Follows from \eqref{eq:8a}.
\end{itemize}

We show that $L=\vec L(\varnothing)$. 
The inclusion $L\subseteq\vec L(\varnothing)$ is justified above, and we need to prove the reverse inclusion. Suppose $\alpha\in\vec L(\varnothing)$. Then there exist formulas $\alpha_1, \ldots, \alpha_n\in L$ such that $\alpha_1\wedge \ldots \wedge\alpha_n\to\alpha\in L$.
Applying $(\mathit{AD})$ we obtain $\alpha_1\wedge \ldots \wedge\alpha_n\in L$, and then by $(\mathit{MP})$ we get $\alpha\in L$.

It is easy to see that $\vec L$ satisfies the implication $(\Rightarrow)$ in \eqref{eq:WD}: if $[\Gamma\to\alpha]^\wedge\subseteq \vec{L}(\varnothing)$ for some nonempty finite $\Gamma$, then $\alpha\in \vec{L}(\Gamma)$ by \eqref{eq:8}.

We show that the implication $(\Leftarrow)$ in \eqref{eq:WD} holds for~$\vec L$.
Suppose $\alpha\in \vec L(\Gamma)$ for some nonempty finite~$\Gamma$. By \eqref{eq:8a}, there exists $\Delta\in \mathcal{P}_{\mathit{fin}}^+(\Gamma\cup L)$ such that 
${\bigwedge}\Delta\to\alpha\in L$. Using $(\mathit{EA})$, we add to the antecedent as conjuncts the formulas from $\Gamma$ that are missing in it, and using $(\mathit{CV})$, we remove from it the conjuncts that lie in $L\setminus\Gamma$; we obtain ${\bigwedge}\Gamma\to\alpha\in L$. Then, by \eqref{eq2:lem:l1:IG}, $[\Gamma\to\alpha]^\wedge\subseteq L$.
\end{proof}

On the set of standard consequences, we introduce a binary relation $\leqslant$ by setting, for any consequences $C_1$ and $C_2$,
$$
\begin{array}{lcl}
C_1\leqslant C_2 & \bydef & \text{for every set of formulas $X$, we have $C_1(X)\subseteq C_2(X)$.}
\end{array}
$$
It is easy to see that $\leqslant$ is a partial order.

\begin{theorem}  \label{4:IG}
Let $L$ be a deductive set and $C$ a well-determined logic such that $C(\varnothing)=L$. Then $C=\vec L$. 
\end{theorem}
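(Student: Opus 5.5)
We prove $C(X)=\vec L(X)$ for every set of formulas $X$, showing the two inclusions separately. Both logics are standard. By Lemma~\ref{3:IG} (together with Lemma~\ref{2:IG}), $\vec L$ is a well-determined logic with $\vec L(\varnothing)=L=C(\varnothing)$. By finitariness (A5) it therefore suffices to compare $C(\Gamma)$ and $\vec L(\Gamma)$ for finite $\Gamma$. For $\Gamma=\varnothing$ there is nothing to prove, since both sides equal $L$. So let $\Gamma$ be finite and nonempty.

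For $\Gamma\neq\varnothing$, condition (WD) holds for both logics, because both are well-determined. Hence
$$
\alpha\in C(\Gamma) \iff [\Gamma\to\alpha]^\wedge\subseteq C(\varnothing)=L=\vec L(\varnothing) \iff \alpha\in\vec L(\Gamma).
$$
So $C(\Gamma)=\vec L(\Gamma)$ for all finite nonempty $\Gamma$, and the case $\Gamma=\varnothing$ was handled above.

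Now take an arbitrary $X$. If $\alpha\in C(X)$, then by (A5) for $C$ there is a finite $\Delta\subseteq X$ with $\alpha\in C(\Delta)=\vec L(\Delta)$. By monotonicity (A2) for $\vec L$, this gives $\alpha\in\vec L(X)$. The reverse inclusion is symmetric: use (A5) for $\vec L$, which holds by \eqref{eq:8a}, and then (A2) for $C$. Hence $C\leqslant\vec L$ and $\vec L\leqslant C$, so $C=\vec L$ by antisymmetry of $\leqslant$.

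The main point to check is that finitariness is available for $C$. If the paper's definition of a well-determined logic does not presuppose standardness, I would avoid it. Instead I would prove $\vec L(X)\subseteq C(X)$ directly: if ${\bigwedge}\Gamma\to\alpha\in L$ with $\Gamma\subseteq X\cup L$ finite and nonempty, then by (B1) and \eqref{eq2:lem:l1:IG} we get $\alpha\in C(\Gamma)\subseteq C(X\cup L)$. Since $L=C(\varnothing)\subseteq C(X)$, idempotence (A3) gives $C(X\cup L)=C(X)$, so $\alpha\in C(X)$. The inclusion $C(X)\subseteq\vec L(X)$ does genuinely need some compactness of $C$. I would take it from standardness, which is assumed for the consequences on which $\leqslant$ is defined just before the theorem.
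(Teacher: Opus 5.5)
Your proof is correct, but it is organized differently from the paper's. The paper treats the two inclusions asymmetrically.
\begin{itemize}
\item For $C(X)\subseteq\vec L(X)$ it uses (A5) for $C$ and then (WD) to place ${\bigwedge}\Gamma\to\alpha$ in $L$.
\item For $\vec L(X)\subseteq C(X)$ it takes the witness $\Gamma\subseteq X\cup L$ from \eqref{eq:8a} and strips the conjuncts lying in $L$ by $(\mathit{CV})$. The subcase $\Gamma\subseteq L$ is handled separately via $(\mathit{AD})$ and $(\mathit{MP})$. It then applies (B1) and (B2) to the remaining $\Delta\subseteq X$.
\end{itemize}
So the paper never needs to know that $\vec L$ is a consequence operation or satisfies (WD). It works only with the defining clause \eqref{eq:8a} and the closure properties of $L$.

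Your main argument instead takes from the proof of Lemma~\ref{3:IG} (not merely its statement ``is deductive'') that $\vec L$ is itself a standard well-determined logic with $\vec L(\varnothing)=L$. This is not circular. You then prove a uniqueness statement: two finitary, monotone consequences satisfying (WD) with the same tautologies agree on finite nonempty sets, trivially agree on $\varnothing$, and hence agree everywhere. This is shorter and symmetric. It also treats $\Gamma=\varnothing$ explicitly, which the paper's first case passes over: it applies (WD) to a finite $\Gamma\subseteq X$ obtained from (A5), and that $\Gamma$ could be empty.

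Your fallback for $\vec L(X)\subseteq C(X)$ uses $X\cup L\subseteq C(X)$ and idempotence to get $C(X\cup L)=C(X)$. This is slicker than the paper's second case, since it dispenses with $(\mathit{CV})$ and the $(\mathit{AD})$/$(\mathit{MP})$ subcase altogether. You are also right that the inclusion $C(X)\subseteq\vec L(X)$ genuinely requires (A5) for $C$, exactly as in the paper.
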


\begin{proof}
Suppose $C\not=\vec L$. Then either $C\not\leqslant\vec L$ or $\vec L\not\leqslant C$.

Assume $C\not\leqslant\vec L$. Then there exist $X$ and $\alpha$ such that $\alpha\in C(X)$ and $\alpha\not\in\vec L(X)$. By $(A5)$ for $C$, there exists a finite $\Gamma\subseteq X$ such that $\alpha\in C(\Gamma)$. Since $C$ is well-determined, ${\bigwedge}\Gamma\to\alpha\in C(\varnothing)=L$. Then, by \eqref{eq:8a}, $\alpha\in\vec L(\Gamma)$, and hence $\alpha\not\in\vec L(X)$. Contradiction.

Assume $\vec L\not\leqslant C$. Then there exist $X$ and $\alpha$ such that $\alpha\not\in C(X)$ and $\alpha\in\vec L(X)$. 
From the latter it follows that there exists $\Gamma\in\mathcal{P}_{\mathit{fin}}^+(X\cup L)$ such that ${\bigwedge}\Gamma\to\alpha\in L$. 
Let $\Delta = \Gamma\setminus L$. Note that $\Delta\ne\varnothing$, since otherwise we would have $\alpha\in L$, and hence $\alpha\in C(X)$, which is not the case. Using $(\mathit{CV})$, we remove from the antecedent of ${\bigwedge}\Gamma\to\alpha$ the conjuncts not belonging to $\Delta$; we obtain ${\bigwedge}\Delta\to\alpha\in L$. By $(B1)$ and $(B2)$ for $C$, we get $\alpha\in C(\Delta)$, and therefore $\alpha\in C(X)$. Contradiction.

Thus $C=\vec L$.
\end{proof}
}


\begin{thebibliography}{99}
\addcontentsline{toc}{section}{References}

\bibitem{IG:MR:agadzhanian2022}
I.~Agadzhanian, M.~Rybakov.
\newblock Complexity of the variable-free fragment of the weak Grzegorczyk logic.
\newblock arXiv:2211.14571, 2022, 13\,p.

\bibitem{IG:MR:chagrov2003}
A.~Chagrov, M.~Rybakov.
\newblock How many variables does one need to prove PSPACE-hardness of modal logics?
\newblock Advances in Modal Logic, 4:71--82, 2003.

\bibitem{IG:MR:ChZ}
A.~Chagrov, M.~Zakharyaschev.
\newblock Modal Logic.
\newblock Oxford University Press, 1997, 605\,p.

\bibitem{IG:MR:cook1971}
S.\,A.~Cook.
\newblock The Complexity of Theorem-Proving Procedures.
\newblock In: Proceedings of the Third Annual ACM Symposium on the Theory of Computation, 1971, pp.~151--158.

\bibitem{IG:MR:FerreiraOliva:2010}
G.~Ferreira, P.~Oliva.
\newblock On various negative translations.
\newblock In: Proceedings Third International Workshop on Classical Logic and Computation CL\&C~2010, Brno, Czech Republic, 2010, pp.~21--33.

\bibitem{IG:MR:Feys1965}
R.~Feys.
\newblock Modal Logics.
\newblock E.~Nauwelaerts, Louvain, 1965, 219\,p.

\bibitem{IG:MR:FJP:2003}
J.\,M.~Font, R.~Jansana, D.~Pigozzi.
\newblock A Survey of Abstract Algebraic Logic.
\newblock Studia Logica, 74(1):13--97, 2003.

\bibitem{IG:MR:3}
I.\,A.~Gorbunov.
\newblock Well-defined logics.
\newblock Logical Investigations, 17:95--108, 2011.
\newblock (In~Russian)

\bibitem{IG:MR:4}
I.\,A.~Gorbunov.
\newblock An effective criterion of deductivity of sets of logical formulas.
\newblock Herald of Tver State University. Series: Applied Mathematics, 1:95--103, 2017.
\newblock (In~Russian)

\bibitem{IG:MR:halpern1995}
J.\,Y.~Halpern.
\newblock The effect of bounding the number of primitive propositions and the depth of nesting on the complexity of modal logic.
\newblock Artificial Intelligence, 75(2):361--372, 1995.

\bibitem{IG:MR:Jaffe:2000}
A.\,M.~Jaffe.
\newblock The millennium grand challenge in mathematics.
\newblock Notices of the American Mathematical Society, 53(6):652--660, 2000.

\bibitem{IG:kudinov2025}
A.~Kudinov, M.~Rybakov.
\newblock Complexity of the variable-free fragments of non-normal modal logics (extended version).
\newblock arXiv:2507.09136, 2025, 8\,p.

\bibitem{IG:MR:levin1973}
L.\,A.~Levin.
\newblock Universal enumeration problems.
\newblock Problems of Information Transmission, 9(3):115--116, 1973.
\newblock (In~Russian)

\bibitem{IG:MR:onoprienko-rybakov-submitted}
A.~Onoprienko, M.~Rybakov.
\newblock Joint logic of problems and propositions: translations, embeddings, and computational complexity.
\newblock Submitted.

\bibitem{IG:MR:2024:Nsk:1}
A.\,A.~Onoprienko, M.\,N.~Rybakov.
\newblock Complexity of the logic HC with one variable.
\newblock In: International Conference ``Maltsev Meetings''. Abstracts of talks, Novosibirsk, 2024, p.~44.
\newblock (In~Russian)

\bibitem{IG:MR:pahomov2014}
F.~Pahomov.
\newblock On the complexity of the closed fragment of Japaridze's provability logic.
\newblock Archive for Mathematical Logic, 53:949--967, 2014.

\bibitem{IG:MR:2002:LI:1}
M.\,N.~Rybakov, A.\,V.~Chagrov.
\newblock Constant formulas in modal logics: the decidability problem.
\newblock Logical Investigations, 9:202--220, 2002.
\newblock (In~Russian)

\bibitem{IG:MR:2003:LI}
M.\,N.~Rybakov.
\newblock Complexity of the decidability problem for the basic and formal logics.
\newblock Logical Investigations, 10:158--166, 2003.
\newblock (In~Russian)

\bibitem{IG:MR:2004:LI}
M.\,N.~Rybakov.
\newblock Embedding of intuitionistic logic into its two-variable fragment and the complexity of this fragment.
\newblock Logical Investigations, 11:247--261, 2004.
\newblock (In~Russian)

\bibitem{IG:MR:2006:AiML}
M.~Rybakov.
\newblock Complexity of intuitionistic and Visser's basic and formal logics in finitely many variables.
\newblock In: Advances in Modal Logic, vol.~6, College Publications, London, 2006, pp.~393--411.

\bibitem{IG:MR:2007:Tver}
M.\,N.~Rybakov.
\newblock Complexity of the constant fragment of propositional dynamic logic.
\newblock Herald of Tver State University. Series: Applied Mathematics, 5:5--17, 2007.
\newblock (In~Russian)

\bibitem{IG:MR:rybakov2007}
M.~Rybakov.
\newblock Complexity of finite-variable fragments of EXPTIME-complete logics.
\newblock Journal of Applied Non-Classical Logics, 17(3):359--382, 2007.

\bibitem{IG:MR:2008:JANCL}
M.~Rybakov.
\newblock Complexity of intuitionistic propositional logic and its fragments.
\newblock Journal of Applied Non-Classical Logics, 18(2--3):267--292, 2008.

\bibitem{IG:MR:2018:IGPL}
M.~Rybakov, D.~Shkatov.
\newblock Complexity and expressivity of propositional dynamic logics with finitely many variables.
\newblock Logic Journal of the IGPL, 26(5):539--547, 2018.

\bibitem{IG:MR:2022:JLC}
M.~Rybakov, D.~Shkatov.
\newblock Complexity of finite-variable fragments of products with non-transitive modal logics.
\newblock Journal of Logic and Computation, 32(5):853--870, 2022.

\bibitem{IG:MR:2022:TCS}
M.~Rybakov, D.~Shkatov.
\newblock Complexity of finite-variable fragments of propositional temporal and modal logics of computation.
\newblock Theoretical Computer Science, 925:45--60, 2022.

\bibitem{IG:MR:rybakov2025}
M.~Rybakov, M.~Shcherbakov.
\newblock Logics with the axiom of convergence: complexity with a small number of variables in the language (extended version).
\newblock arXiv:2507.12343, 2025, 6\,p.

\bibitem{IG:MR:2025:JLC:embedding}
M.~Rybakov, D.~Shkatov.
\newblock Polytime embedding of intuitionistic modal logics into their one-variable fragments.
\newblock Journal of Logic and Computation, 35(4):Article exae077, 2025.

\bibitem{IG:MR:2025:Nsk:2}
M.\,N.~Rybakov.
\newblock Complexity of the logics S2 and S3.
\newblock In: International Conference ``Maltsev Meetings''. Abstracts of talks, Novosibirsk, 2025, p.~100.
\newblock (In~Russian)

\bibitem{IG:MR:spaan1993}
E.~Spaan.
\newblock Complexity of Modal Logics.
\newblock PhD thesis, Universiteit van Amsterdam, 1993.

\bibitem{IG:MR:svejdar2003}
V.~\v{S}vejdar.
\newblock The decision problem of provability logic with only one atom.
\newblock Archive for Mathematical Logic, 42(8):763--768, 2003.

\bibitem{IG:MR:tseitin1968}
G.\,S.~Tseytin.
\newblock On the complexity of derivation in propositional calculus.
\newblock In: Studies in Constructive Mathematics and Mathematical Logic.~II, Notes of Scientific Seminars of LOMI, 8:234--259, 1968.
\newblock (In~Russian)

\bibitem{IG:MR:1}
R.~W\'{o}jcicki.
\newblock Lectures on Propositional Calculi.
\newblock Ossolineum, Wroclaw, 1984, 292\,p.

\bibitem{IG:MR:2}
R.~W\'{o}jcicki.
\newblock Theory of Logical Calculi: Basic Theory of Consequence Operations.
\newblock Kluwer Academic Publishers, 1988, 473\,p.

\end{thebibliography}
\end{document}